\documentclass[12pt,reqno]{amsart}
\usepackage{amsmath,a4wide}
\usepackage{stmaryrd,mathrsfs,bm,amsthm,mathtools,yfonts,amssymb}
\usepackage{xcolor}

\begingroup
\newtheorem{theorem}{Theorem}[section]
\newtheorem{lemma}[theorem]{Lemma}
\newtheorem{proposition}[theorem]{Proposition}
\newtheorem{corollary}[theorem]{Corollary}
\endgroup

\theoremstyle{definition}
\newtheorem{definition}[theorem]{Definition}

\newtheorem{ipotesi}[theorem]{Assumption}

\setcounter{tocdepth}{2}
\numberwithin{equation}{section}

\title{Multiple valued functions and integral currents}
\author{Camillo De Lellis}
\author{Emanuele Spadaro}

\setcounter{section}{-1}

\newcommand\supp{{\rm spt}}
\newcommand\bT{\mathbf{T}}

\newcommand\res{\mathop{\hbox{\vrule height 7pt width .3pt depth 0pt
\vrule height .3pt width 5pt depth 0pt}}\nolimits}
\newcommand{\im}{{\rm Im}}
\newcommand{\gr}{{\rm Gr}}
\newcommand{\bG}{\mathbf{G}}
\newcommand{\cH}{{\mathcal{H}}}
\newcommand{\bJ}{{\mathbf{J}}}
\newcommand{\cD}{{\mathcal{D}}}
\newcommand{\cone}{{\times\hspace{-0.6em}\times\,}}
\newcommand{\mass}{{\mathbf{M}}}
\newcommand{\de}{\partial}

\newcommand{\p}{{\mathbf{p}}}
\newcommand{\q}{{\mathbf{q}}}


\newcommand{\cM}{{\mathcal{M}}}
\newcommand{\bU}{{\mathbf{U}}}
\newcommand{\phii}{{\bm{\varphi}}}
\newcommand{\Phii}{{\bm{\Phi}}}



\newcommand{\bI}{{\bm{I}}}

\newcommand{\bF}{{\bm{F}}}


\newcommand{\bB}{{\mathbf{B}}}
\newcommand{\bC}{{\mathbf{C}}}



\newcommand\Z{{\mathbb Z}}

\newcommand\N{{\mathbb N}}

\newcommand\R{{\mathbb R}}

\newcommand{\eps}{{\varepsilon}}


\def\Xint#1{\mathchoice
{\XXint\displaystyle\textstyle{#1}}%
{\XXint\textstyle\scriptstyle{#1}}%
{\XXint\scriptstyle\scriptscriptstyle{#1}}%
{\XXint\scriptscriptstyle\scriptscriptstyle{#1}}%
\!\int}
\def\XXint#1#2#3{{\setbox0=\hbox{$#1{#2#3}{\int}$ }
\vcenter{\hbox{$#2#3$ }}\kern-.6\wd0}}
\def\mint{\Xint-}

\newcommand{\Iqs}{{\mathcal{A}}_Q(\R^{n})}
\newcommand{\Iq}{{\mathcal{A}}_Q}
\def\a#1{\left\llbracket{#1}\right\rrbracket}
\newcommand{\Lip}{{\rm {Lip}}}
\def\a#1{\left\llbracket{#1}\right\rrbracket}

\newcommand{\norm}[2]{\left\|#1\right\|_{#2}}
\newcommand{\cG}{{\mathcal{G}}}
\def\I#1{{\mathcal{A}}_{#1}}

\newcommand{\etaa}{{\bm{\eta}}}


\newcommand{\diam}{{\rm {diam}}}

\newcommand{\ph}{\varphi}


\begin{document}

\begin{abstract}
We prove several results on Almgren's multiple valued functions and
their links to integral currents.
In particular, we give a simple proof of the fact that
a Lipschitz multiple valued map naturally
defines an integer rectifiable current; we derive explicit formulae for
the boundary, the mass and the first variations along certain specific vector-fields;
and exploit this connection to derive a
delicate reparametrization property for multiple valued functions.
These results play a crucial role in our new proof of
the partial regularity of area minimizing currents
\cite{DS3,DS4,DS5}.
\end{abstract}

\maketitle

\section{Introduction}
It is known since the pioneering work of Federer and Fleming \cite{FF} that 
one can naturally associate an integer rectifiable current
to the graph of a Lipschitz function in the Euclidean space, integrating forms over the corresponding
submanifold, endowed with its natural orientation.
It is then possible to derive formulae for
the boundary of the current, its mass and its first variations along smooth
vector-fields. Moreover, all these formulae have important Taylor expansions when the current is sufficiently flat.
In this paper we provide elementary proofs for the corresponding facts
in the case of Almgren's multiple valued functions
(see \cite{DS1} for the relevant definitions).

The connection between multiple valued functions 
and integral currents
is crucial in the analysis of the regularity of area minimizing currents
for two reasons.
On the one hand, it provides the necessary tools for the approximation of currents
with graphs of multiple valued function.
This is a fundamental idea for the study of the regularity
of minimizing currents in the classical ``single-vaued'' case, and
it also plays a fundamental role in the proof of
Almgren's partial regularity result (cf.~\cite{Alm, DS3}). In this perspective,
explicit expressions for the mass and the first variations are
necessary to derive the right estimates on the main geometric quantities
involved in the regularity theory (cf.~\cite{DS3,DS4,DS5}).
On the other hand, the connection can be exploited 
to infer interesting conclusions about the multiple valued functions
themselves. 

This point of view has been taken fruitfully in many problems
for the case of classical functions (see, for instance,
\cite{GMbook1,GMbook2} and the references therein), 
and has been recently exploited in the multiple valued setting
in \cite{DFS, Sp10}.
The prototypical example of interest here is the following:
let $f: \R^m\supset \Omega\to \R^n$ be a
Lipschitz map and $\gr (f)$ its graph.
If the Lipschitz constant of $f$ is small
and we change coordinates in $\R^{m+n}$ with
an orthogonal transformation close to the identity, then the set $\gr (f)$ is the
graph of a Lipschitz function $\tilde{f}$ over 
some domain $\tilde{\Omega}$ also in the new system of coordinates.
In fact it is easy to see that there exist
suitable maps $\Psi$ and $\Phi$ such that
$\tilde{f} (x) = \Psi \big(x, f (\Phi (x))\big)$. 
In the multiple valued case, it remains still true that $\gr (f)$
is the graph of a new Lipschitz map $\tilde{f}$ in the new system of coordinates,
but we are not aware of any elementary proof of such statement, 
which has to be much more subtle because simple relations as the
one above cannot hold.
It turns out that the structure of $\gr (f)$ as integral current gives a simple
approach to this and similar issues.
Several natural estimates can then be proved for $\tilde{f}$, although
more involved and much harder.
The last section of the paper is dedicated to these questions;
more careful estimates obtained in the same vein will also be given
in \cite{DS4}, where they play a crucial role.

Most of the conclusions of this paper are already established, or
have a counterpart, in Almgren's monograph \cite{Alm},
but we are not always able to point out precise references to
statements therein.
However, also when this is possible, our proofs have an independent
interest and are in our opinion much simpler.
More precisely, the material of Sections \ref{s:currents} and \ref{s:boundaries}
is covered by \cite[Sections 1.5-1.7]{Alm}, where Almgren deals with
general flat chains. This is more than what is needed in \cite{DS3, DS4, DS5},
and for this reason we have chosen to treat only
the case of integer rectifiable currents. 
Our approach is anyway simpler and, instead of relying, as Almgren does,
on the intersection theory of flat chains, we use rather elementary tools.
For the theorems of Section~\ref{s:taylor_area} we cannot point out precise references, but Taylor expansions for the area functional
are ubiquitous in \cite[Chapters 3 and 4]{Alm}. The theorems
of Section~\ref{s:taylor_dT} do not appear in \cite{Alm}, as Almgren seems 
to consider only some particular classes of deformations (the ``squeeze'' and ``squash'', see \cite[Chapter 5]{Alm}), while we derive fairly general formulas. 
Finally, it is very likely that the conclusions
of Section~\ref{s:reparametrize} appear in some form in the construction
of the center manifold of \cite[Chapter 4]{Alm},
but we cannot follow the intricate arguments and notation of that chapter.
In any case, our approach to ``reparametrizions'' of multiple valued maps
seems more flexible and powerful, capable of further applications,
because, as it was first realized in \cite{DS1}, we can use tools from metric analysis
and metric geometry developed in the last 20 years.

\medskip

{\bf Acknowledgments} The research of Camillo De Lellis has been supported by the ERC grant
agreement RAM (Regularity for Area Minimizing currents), ERC 306247. 
The authors are warmly
thankful to Bill Allard for several enlightening conversations
and his constant enthusiastic encouragement; and very grateful to Luca Spolaor
and Matteo Focardi for carefully reading a preliminary version of the paper
and for their very useful comments.
Camillo De Lellis is also
very thankful to the University of Princeton, where he has spent most of his sabbatical completing
this and the papers \cite{DS3, DS4, DS5}.

\section{$Q$-valued push-forwards}\label{s:currents}

We use the notation $\langle , \rangle$ for: the euclidean
scalar product, the naturally induced inner products on $p$-vectors
and $p$-covectors
and the duality pairing of $p$-vectors and $p$-covectors;
we instead restrict the use of the symbol $\cdot$ to matrix products.
Given a $C^1$ $m$-dimensional 
submanifold $\Sigma\subset \R^N$, a function $f: \Sigma \to \R^k$ and a
vector field $X$ tangent to $\Sigma$, 
we denote by $D_X f$ the derivative of $f$ along $X$,
that is $D_X f (p) = (f\circ \gamma)' (0)$ 
whenever $\gamma$ is a smooth curve on $\Sigma$ with $\gamma (0) = p$ and $\gamma' (0) = X(p)$.
When $k=1$, we denote by $\nabla f$ 
the vector field tangent to $\Sigma$ such that $\langle \nabla f, X\rangle = D_X f$ for every tangent vector field $X$.
For general $k$, $Df|_x: T_x \Sigma \to \R^k$ will be the linear operator such that
$Df|_x \cdot X(x) = D_X f (x)$ for any tangent vector field $X$. We write $Df$ 
for the map $x\mapsto D f|_x$ and sometimes we will also use the notation $Df (x)$ in place of $Df|_x$.
Having fixed an orthonormal base $e_1, \ldots e_m$ on $T_x \Sigma$ and 
letting $(f_1, \ldots, f_k)$ be the components of $f$, we can write
$\nabla f_i = \sum_{j=1}^m a_{ij} e_j$ and $|Df|$ for the usual Hilbert-Schmidt norm:
\[
|Df|^2 = \sum_{j=1}^m |D_{e_j} f|^2 = \sum_{i=i}^k |\nabla f_i|^2 = \sum_{i,j} a_{ij}^2\, .
\]
All the notation above is extended to the differential of Lipschitz multiple
valued functions at points where they are differentiable in the sense of 
\cite[Definition 1.19]{DS1}: although the definition in there is for euclidean domains, 
its extension to $C^1$ submanifolds $\Sigma\subset \mathbb R^N$ is done, as usual, 
using coordinate charts.

We will keep the same notation also when $f=Y$ is a vector field, i.e. takes values in $\R^N$, the same
Euclidean space where $\Sigma$ is embedded. In that case we define additionally
${\rm div}_\Sigma Y := \sum_i \langle D_{e_i} Y, e_i\rangle$.
Moreover, when $Y$ is tangent to $\Sigma$, we introduce the covariant derivative
$D_\Sigma Y|_x$, i.e.~a linear map from $T_x \Sigma$ into itself which gives
the tangential component of $D_X Y$. Thus, if we denote by
$\p_x : \R^N \to T_x \Sigma$ the orthogonal projection onto $T_x \Sigma$, we have 
$D_\Sigma Y |_x = \p_x \cdot D Y (x)$. It follows that $D_\Sigma Y \cdot X = \nabla_X Y$,
where we use $\nabla$ for the connection (or covariant differentiation) on $\Sigma$ compatible with its structure 
as Riemannian submanifold of $\R^N$. Such covariant differentiation is then extended in 
the usual way to general tensors on $\Sigma$. 

When dealing with $C^2$ submanifolds $\Sigma$ of $\R^N$ we will denote by $A$ the 
following tensor: $A|_x$ as a bilinear map on $T_x \Sigma \times T_x \Sigma$ taking
values on $T_x \Sigma^\perp$ (the orthogonal complement of $T_x \Sigma$) and if 
$X$ and $Y$ are vector fields tangent to $\Sigma$, then $A (X, Y)$
is the normal component of $D_X Y$, which we will denote by $D^\perp_X Y$. $A$ is called
second fundamental form by some authors
(cf.~\cite[Section 7]{Sim}, where the tensor is denoted by $B$) and
we will use the same terminology, although in differential geometry it is more customary to call $A$ 
``shape operator'' and to use ``second fundamental form'' for scalar products
$\langle A(X,Y), \eta\rangle$ with a fixed normal vector field 
(cf. \cite[Chapter 6, Section 2]{DoCar} and \cite[Vol. 3, Chapter 1]{Spivak3}).
In addition,
$H$ will denote the trace of $A$ (i.e. $H = \sum_i A (e_i, e_i)$ where $e_1, \ldots, e_m$ is an orthonormal
frame tangent to $\Sigma$) and will be called {\em mean curvature}. 

\subsection{Push-forward through multiple valued functions of $C^1$ submanifolds}
In what follows we consider an $m$-dimensional $C^1$ submanifold $\Sigma$ of $\R^N$ and
use the word {\em measurable} for those subsets of $M$ which are $\cH^m$-measurable. Any time
we write an integral over (a measurable subset of) $\Sigma$ we understand that this integral
is taken with respect to the $\cH^m$ measure. We recall the following lemma which, even if not stated explicitely
in \cite{DS1}, is contained in several arguments therein.

\begin{lemma}[Decomposition]\label{l:chop}
Let $M \subset \Sigma$ be measurable and $F: M\to \Iqs$ Lipschitz. Then 
there are a countable partition of $M$ in bounded measurable subsets $M_i$ ($i\in \N$)
and Lipschitz functions $f^j_i: M_i\to \R^{n}$ ($j\in \{1, \ldots, Q\}$) such that
\begin{itemize}
\item[(a)] $F|_{M_i} = \sum_{j=1}^Q \a{f^j_i}$ for every $i\in \N$ and $\Lip (f^j_i)\leq
\Lip (F)$ $\forall i,j$;
\item[(b)] $\forall i\in \N$ and $j, j' \in \{1, \ldots ,Q\}$,
either $f_i^j \equiv f_i^{j'}$ or
$f_i^j(x) \neq f_i^{j'}(x)$ $\forall x \in M_i$;
\item[(c)] $\forall i$ we have $DF (x) = \sum_{j=1}^Q \a{Df_i^j (x)}$ for a.e. $x\in M_i$.
\end{itemize}
\end{lemma}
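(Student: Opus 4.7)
My plan is to induct on $Q$, after a trivial reduction to bounded $M$, by splitting $F$ locally whenever its support is not a single point counted with multiplicity $Q$, and to conclude with a finite combinatorial refinement that forces the ``rigid'' condition (b) and hence the individual Lipschitz bounds of (a). Cover $\Sigma$ by countably many bounded measurable sets, intersect with $M$ and treat each piece separately, so assume $M$ is bounded; the base case $Q=1$ is vacuous. For the inductive step let
\[
M^* := \bigl\{ x \in M : F(x) = Q\,\a{y(x)} \text{ for some } y(x)\in\R^n \bigr\}.
\]
On $M^*$ one has $|y(x)-y(x')|=Q^{-1/2}\mathcal{G}(F(x),F(x'))$, so $y$ is Lipschitz with constant $\leq \Lip(F)$; set $f^j\equiv y$ there. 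For every $x_0\in M\setminus M^*$ the set $\supp F(x_0)$ decomposes as a disjoint union $T_1\sqcup T_2$ of two non-empty closed subsets with positive gap $\delta(x_0)$; choosing $\rho(x_0)>0$ small enough (depending on $\delta(x_0)$ and $\Lip(F)$), the gap persists on $U_{x_0}:=B_{\rho(x_0)}(x_0)\cap M$, so any optimal matching realizing $\mathcal{G}(F(x),F(x'))$ with $x,x'\in U_{x_0}$ cannot swap a point near $T_1$ with one near $T_2$ (a single cross pair would itself exceed the total squared distance). This produces a unique splitting $F|_{U_{x_0}}=F_1+F_2$ with $F_\alpha:U_{x_0}\to\mathcal{A}_{Q_\alpha}(\R^n)$ Lipschitz, $\Lip(F_\alpha)\leq\Lip(F)$, and $Q_\alpha<Q$. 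By second countability of $\R^N$ pick a countable subcover $\{U_{x_k}\}$ of $M\setminus M^*$, convert to a disjoint measurable partition $\tilde U_k:=U_{x_k}\setminus\bigcup_{j<k}U_{x_j}$, and apply the inductive hypothesis on each $\tilde U_k$ to both $F_1$ and $F_2$.

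The induction yields a countable partition of $M$ into bounded measurable pieces $\{N_k\}$ with Lipschitz selections $g_k^1,\ldots,g_k^Q$ of $F|_{N_k}$ satisfying $\Lip(g_k^j)\leq\Lip(F)$. To secure (b), for each $k$ consider the equivalence relation $j\sim_x j'\iff g_k^j(x)=g_k^{j'}(x)$ on $\{1,\ldots,Q\}$ at each $x\in N_k$; only finitely many partitions $P$ of $\{1,\ldots,Q\}$ arise, and each level set $N_{k,P}:=\{x\in N_k : P(x)=P\}$ is measurable as a finite Boolean combination of the closed coincidence sets $\{g_k^j=g_k^{j'}\}\cap N_k$. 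On $N_{k,P}$, selections with the same block label agree identically and selections in different blocks never coincide, giving (b); restricting to $N_{k,P}$ preserves the Lipschitz bound of (a). Property (c) is then the standard combination of Rademacher's theorem applied to each Lipschitz selection $f_i^j$ with the observation that on $M_i$, where by (b) the sheets locally do not cross, at any common point of differentiability the derivatives $\a{Df_i^j}$ reassemble the differential of $F$ in the sense of \cite[Definition 1.19]{DS1}.

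The main obstacle is the local splitting step, specifically verifying that $F_1$ and $F_2$ are Lipschitz with the same constant as $F$ rather than a worse one depending on $\delta(x_0)$. This rests on the quantitative gap estimate excluding cross pairings in the optimal $\mathcal{G}$-matching on a sufficiently small ball; once that is in hand, the rest is bookkeeping, the Lindelöf covering is standard, and the combinatorial refinement for (b) only restricts the domain and so cannot degrade any Lipschitz bound.
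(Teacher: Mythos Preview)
Your proposal is correct and follows essentially the same inductive strategy as the paper's proof, which also isolates the diagonal set $M_0$ (your $M^*$), invokes the local splitting at off-diagonal points (citing \cite[Proposition~1.6]{DS1}, whose proof is precisely your gap-persistence and no-crossing argument), and passes to a countable cover of the complement. Your extra combinatorial refinement to enforce (b) is harmless but in fact unnecessary: since the two pieces $F_1,F_2$ of the local splitting have disjoint supports at every point of $U_{x_0}$, taking the common refinement of the partitions produced by the inductive hypothesis for $F_1$ and $F_2$ already gives selections satisfying (b) for $F$.
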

\begin{proof}
The proof is by induction on $Q$. For $Q=1$ it is obvious. Assume 
the statement for any $Q^*<Q$ and fix $F$ and $M$.
Note that, without loss of generality, we can assume that $M$ is bounded.
We set $M_0:= \{x: \exists \, y = y(x)\in \R^n \mbox{ with } F (x) = Q \a{y}\}$.
Clearly, $M_0$ is measurable
because it is the counterimage of a closed subset of $\Iqs$.
Moreover, $y:M_0\to \R^n$ is Lipschitz.
We then set $f_0^j = y$ for every $j\in \{1, \ldots, Q\}$. Next, consider $x\not\in M_0$.
By \cite[Proposition 1.6]{DS1} there exist a ball $B_x$,
two positive numbers $Q_1$ and $Q_2$, and
two Lipschitz $Q_l$-valued functions 
$g_l: M \cap B_x \to \mathcal{A}_{Q_l} (\R^{n})$ for $l=1,2$ such 
that $F|_{B_x\cap M} = \a{g_1} +\a{g_2}$.
We can apply the inductive hypothesis to $g_1$ and $g_2$, and conclude that
$F\vert_{B_x\cap M}$ can be reduced to the form as in (a) and (b) when restricted to a (suitably chosen) countable partition of $M\cap B_x$ into measurable sets. 
Since $\Sigma$ is paracompact, we can find a countable cover $\{B_{x_i}\}_i$ of $M\setminus M_0$, from which (a) and (b) follow.
The last statement can be easily verified at every Lebesgue point $x\in M_i$
where $F$ and all the $f_i^j$'s are differentiable. 
\end{proof}

When $F:M\subset\Sigma \to \R^n$ is a proper Lipschitz function and
$\Sigma\subset \R^N$ is oriented,
the current $S = F_\sharp \a{M}$ in $\R^n$ is given
by
\[
S (\omega) = \int_M \langle \omega(F(x)), DF(x)_\sharp \vec e(x)\,\rangle\, d\cH^m(x)
\quad \forall\;
\omega\in \cD^m(\R^n),
\]
where $\vec{e}(x) = e_1(x)\wedge \ldots \wedge e_m(x)$
is the orienting $m$-vector of $\Sigma$ and 
\[
DF(x)_\sharp \vec e = (DF|_x \cdot e_1) \wedge \ldots \wedge (DF|_x \cdot e_m),
\]
(cf.~\cite[Remark 26.21(3)]{Sim}; as usual $\cD^m (\Omega)$ denotes the space of smooth $m$-forms compactly supported in $\Omega$).
Using the Decomposition Lemma \ref{l:chop} it is
possible to extend this definition to multiple valued functions.
To this purpose, we give the definition of \textit{proper}
multiple valued functions. 

\begin{definition}[Proper $Q$-valued maps]\label{d:proper}
A measurable $F:M \to \Iqs$ is called
\textit{proper} if there is a measurable selection $F^1, \ldots, F^Q$
as in \cite[Definition 1.1]{DS1} (i.e. $F = \sum_i \a{F^i}$) such that
$\bigcup_i \overline{(F^i)^{-1}(K)}$ is compact
for every compact $K \subset \R^n$. It is then obvious that if there
exists such a selection, then {\em every} measurable selection shares the
same property.
\end{definition}

We warn the reader that the terminology might be slightly misleading,
as the condition above is effectively {\em stronger} than the usual
properness of maps taking values in the metric space $(\Iqs, \cG)$, even
when $F$ is continuous: the standard notion of {\em properness} would
not ensure the well-definition of the multiple-valued push-forward.

\begin{definition}[$Q$-valued push-forward]\label{d:push_forward}
Let $\Sigma \subset\R^N$ be a $C^1$ oriented manifold, $M\subset \Sigma$ a
measurable subset and $F:M\to\Iqs$ a proper Lipschitz map. Then,
we define the push-forward of $M$ through $F$ as the current
$\mathbf{T}_F = \sum_{i,j} (f_i^j)_\sharp \a{M_i}$, where $M_i$ and $f_i^j$
are as in Lemma \ref{l:chop}: that is,
\begin{equation}\label{e:push_forward}
\mathbf{T}_F (\omega) := \sum_{i\in\N} \sum_{j=1}^Q \underbrace{\int_{M_i} 
\langle \omega (f_i^j(x)), Df_i^j (x)_\sharp \vec{e}(x)\,\rangle \, d\cH^m(x)}_{ T_{ij} (\omega)}
\quad \forall\; \omega \in \cD^m(\R^{n})\, .
\end{equation}
\end{definition}
We first want to show that $T$ is well-defined. Since $F$ is proper, we easily deduce that
\[
|T_{ij} (\omega)|\leq \Lip (F) \|\omega\|_\infty \cH^m ((f_i^j)^{-1}) (\supp (\omega)) <\infty.
\]
On the other hand, upon setting $F^j (x):= f^j_i (x)$ for $x\in M_i$, we have
$\cup _i (f_i^j)^{-1} (\supp (\omega)) = (F^j)^{-1} (\supp (\omega))$ and
$(f_i^j)^{-1} (\supp (\omega)) \cap (f_{i'}^j)^{-1} (\supp (\omega))=\emptyset$ for $i\neq i'$,
thus leading to
\begin{align*}
\sum_{i,j} |T_{ij} (\omega)| \leq \Lip(F)\,\|\omega\|_{\infty}\,
\sum_{j=1}^Q \cH^m((F^j)^{-1}(\supp(\omega))) < +\infty.
\end{align*}
Therefore, we can pass the sum inside the integral in 
\eqref{e:push_forward} and, by Lemma~\ref{l:chop}, get
\begin{equation}\label{e:repr_formula}
\mathbf{T}_F (\omega) = \int_M \sum_{l=1}^Q \langle \omega (F^l(x)), DF^l (x)_\sharp \vec{e}(x)\,\rangle
\, d\cH^m(x) \quad \forall\; \omega \in \cD^m(\R^{n}).
\end{equation}
In particular, recalling the standard theory of
rectifiable currents (cf.~\cite[Section 27]{Sim}) and the area formula (cf.~\cite[Section 8]{Sim}),
we have achieved the following proposition.

\begin{proposition}[Representation of the push-forward]\label{p:repr_formula}
The definition of the action of $\bT_F$
in \eqref{e:push_forward} does not depend on the chosen
partition $M_i$ nor on the chosen decomposition $\{f^j_i\}$, \eqref{e:repr_formula} holds and, hence, $\bT_F$ is a (well-defined)
integer rectifiable current given by $\bT_F = (\im(F),\Theta, \vec \tau)$ where:
\begin{itemize}
\item[(R1)] $\im(F)=\bigcup_{x\in M} \supp (F(x)) =
\bigcup_{i\in \N} \bigcup_{j=1}^Q f_i^j(M_i)$ is an $m$-dimensional rectifiable set; 
\item[(R2)] $\vec\tau$ is a Borel unitary $m$-vector orienting $\im (F)$;
moreover, for $\cH^m$-a.e. $p\in \im (F)$, we have 
$Df_i^j(x)_\sharp \vec e (x) \neq 0$ for every $i,j, x$ with $f_i^j (x) =p$ and
\begin{equation}\label{e:orientazione}
\vec\tau(p) = \pm
\frac{Df_{i}^j(x)_\sharp \vec e(x) }{|Df_i^j(x)_\sharp
\vec e (x) |}\, ;
\end{equation}
\item[(R3)] for $\cH^m$-a.e.~$p\in \im (F)$,
the (Borel) multiplicity function $\Theta$ equals
\[
\Theta(p) := \sum_{i,j,x : f_i^j(x) = p} \left\langle\vec\tau,
\frac{Df_{i}^j(x)_\sharp \vec e(x) }{|Df_i^j(x)_\sharp
\vec e (x) |}
\right\rangle.
\]
\end{itemize}
\end{proposition}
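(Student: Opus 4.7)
The plan is to leverage formula~\eqref{e:repr_formula}, which has already been derived in the paragraph preceding the proposition, and to apply the area formula \cite[Section 8]{Sim} piecewise to each Lipschitz summand $(f_i^j)_\sharp \a{M_i}$.

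First, to show independence from the partition and the decomposition, I would observe that any two applications of Lemma~\ref{l:chop} produce Borel selections $F^1, \ldots, F^Q$ and $G^1, \ldots, G^Q$ of $F$ with $\sum_j \a{F^j(x)} = F(x) = \sum_j \a{G^j(x)}$ pointwise. By \cite[Definition 1.19]{DS1}, differentiability of $F$ at $x$ determines the unordered $Q$-tuple of pairs $\{(F^j(x), DF^j(x))\}_{j=1}^Q$ intrinsically, so the integrand of \eqref{e:repr_formula} is symmetric in $j$ and its integral depends only on $F$.

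Next I would establish (R1)--(R3). The set $\im(F) = \bigcup_{i,j} f_i^j(M_i)$ is $m$-rectifiable as a countable union of Lipschitz images of measurable subsets of the $C^1$ manifold $\Sigma$, giving (R1). Let $\vec\tau$ be a Borel unit $m$-vector field orienting $\im(F)$. For each $(i,j)$, the area formula applied to $f_i^j$ shows that $\{x \in M_i : Df_i^j(x)_\sharp \vec e(x) = 0\}$ has $\cH^m$-null image; removing these countably many null sets leaves a full-measure subset of $\im(F)$ on which, at every preimage $x$ of every $p$, the simple $m$-vector $Df_i^j(x)_\sharp \vec e(x)$ is nonzero and a scalar multiple of $\vec\tau(p)$, and therefore
\[
Df_i^j(x)_\sharp \vec e(x) = \sigma_{ij}(p,x)\,|Df_i^j(x)_\sharp \vec e(x)|\,\vec\tau(p), \qquad \sigma_{ij}(p,x) \in \{\pm 1\},
\]
which is (R2). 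Applying the area formula to each $T_{ij}$ and summing---justified by the absolute convergence bound already used in the derivation of \eqref{e:repr_formula}---yields
\[
\bT_F(\omega) = \int_{\im(F)} \langle \omega(p), \vec\tau(p)\rangle\,\Theta(p)\, d\cH^m(p),
\]
with $\Theta$ defined as in (R3).

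Finally, integrality and local integrability of $\Theta$ follow from one more invocation of the area formula: for any compact $K \subset \R^n$, properness of $F$ places $\bigcup_j (F^j)^{-1}(K)$ in a compact subset of $\Sigma$, and
\[
\int_K \sum_{i,j} \#\{x \in M_i : f_i^j(x) = p\}\, d\cH^m(p) \leq Q\,\Lip(F)^m\,\cH^m\Big(\textstyle\bigcup_j (F^j)^{-1}(K)\Big) < +\infty,
\]
so $\Theta(p)$ is $\cH^m$-a.e.~a finite algebraic sum of $\pm 1$, hence an integer, and $\Theta$ is locally $\cH^m$-integrable; therefore $\bT_F = (\im(F), \Theta, \vec\tau)$ is integer rectifiable. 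The delicate step I expect is the sign bookkeeping underlying (R2)--(R3): the local signs $\sigma_{ij}(p,x)$ must assemble consistently across different pieces of the partition into a single globally defined Borel multiplicity, which requires matching the area formula for each $f_i^j$ with the one fixed orientation $\vec\tau$.
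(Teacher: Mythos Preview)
Your proposal is correct and follows essentially the same route as the paper. In fact the paper does not give a separate proof of this proposition at all: the paragraph preceding it derives \eqref{e:repr_formula}, and the paper then simply says ``recalling the standard theory of rectifiable currents (cf.~\cite[Section 27]{Sim}) and the area formula (cf.~\cite[Section 8]{Sim}), we have achieved the following proposition.'' Your write-up is exactly an unpacking of that sentence---the area formula applied piecewise to each $f_i^j$, the symmetry of the integrand in \eqref{e:repr_formula} to get independence from the decomposition, and the local integrability from properness---so there is no methodological difference to discuss.
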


\subsection{Push-forward of Lipschitz submanifolds}
As for the classical push-forward, Definition \ref{d:push_forward} can be extended to domains $\Sigma$
which are Lipschitz submanifolds using the fact that such $\Sigma$ can be ``chopped'' 
into $C^1$ pieces. Recall indeed the following fact.

\begin{theorem}[{\cite[Theorem 5.3]{Sim}}]\label{t:Rad-Whitney}
If $\Sigma$ is a Lipschitz $m$-dimensional oriented submanifold, 
then there are countably many $C^1$ $m$-dimensional oriented submanifolds $\Sigma_i$ which cover
$\cH^m$-a.s.~$\Sigma$ and such that the orientations of $\Sigma$ and $\Sigma_i$ coincide 
on their intersection.
\end{theorem}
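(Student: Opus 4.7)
The plan is to prove Theorem \ref{t:Rad-Whitney} by combining local graph representations with the Lusin-type $C^1$ approximation of Lipschitz functions, which in turn rests on Rademacher's theorem and the Whitney extension theorem.

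First I would localize. By the definition of Lipschitz submanifold, $\Sigma$ can be covered by countably many coordinate patches, each of which---after a rotation of $\R^N$---is the graph of a Lipschitz function $f_\alpha:U_\alpha\subset\R^m\to\R^{N-m}$ over some open set $U_\alpha$, and the orientation of $\Sigma$ restricted to that patch corresponds, via the graph map $x\mapsto(x,f_\alpha(x))$, to a fixed orientation on $U_\alpha$ (taking the standard orientation of $\R^m$ or its opposite, consistently on each patch). By Rademacher's theorem $f_\alpha$ is differentiable $\cL^m$-a.e., and at each such point the oriented tangent $m$-vector of $\Sigma$ equals (up to normalization) $(e_1+Df_\alpha\cdot e_1)\wedge\cdots\wedge(e_m+Df_\alpha\cdot e_m)$.

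Next I would invoke the classical Lusin-Whitney approximation (see e.g.\ \cite[Section 3.1.16]{Sim} or Evans-Gariepy, Chapter 6): for each $\alpha$ there exist closed sets $K_{\alpha,k}\subset U_\alpha$ with $\cL^m\big(U_\alpha\setminus\bigcup_k K_{\alpha,k}\big)=0$ and $C^1$ maps $g_{\alpha,k}:\R^m\to\R^{N-m}$ with $f_\alpha=g_{\alpha,k}$ and $Df_\alpha=Dg_{\alpha,k}$ everywhere on $K_{\alpha,k}$. I then define $\Sigma_{\alpha,k}$ to be the graph of $g_{\alpha,k}$ over a suitable open neighborhood $W_{\alpha,k}\supset K_{\alpha,k}$ in $U_\alpha$. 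Each $\Sigma_{\alpha,k}$ is a $C^1$ oriented $m$-dimensional submanifold of $\R^N$, oriented by pulling back the chosen orientation of $U_\alpha$ through the graph map.

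Finally I would verify the two conclusions. For the covering property, on each patch we have $\Sigma\setminus\bigcup_k\gr(f_\alpha|_{K_{\alpha,k}})$ equal to the graph of $f_\alpha$ over a null set, which has $\cH^m$-measure zero by the area formula applied to the Lipschitz graph map; summing over the countable family of patches yields the $\cH^m$-negligibility of $\Sigma\setminus\bigcup_{\alpha,k}\Sigma_{\alpha,k}$. For orientation compatibility, at any point of $\Sigma\cap\Sigma_{\alpha,k}$ where $f_\alpha$ is differentiable (a set of full $\cH^m$-measure in the intersection) the tangent planes agree, because the equality $f_\alpha=g_{\alpha,k}$ on $K_{\alpha,k}$ forces $Dg_{\alpha,k}=Df_\alpha$ at such points; hence the oriented $m$-vectors built from the common graph parametrization agree as well.

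The only step requiring genuine care is the orientation matching: one must be sure that, on overlapping patches $U_\alpha\cap U_\beta$, the graph parametrizations have been chosen orientation-preserving with respect to the given orientation of $\Sigma$, so that the orientations transported to the $C^1$ approximations are globally consistent. This is a bookkeeping issue built into the definition of an oriented Lipschitz manifold and can be handled by fixing a single oriented atlas at the outset; everything else is a routine application of Rademacher plus Whitney-type extension.
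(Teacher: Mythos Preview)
The paper does not prove this theorem; it is quoted verbatim as \cite[Theorem~5.3]{Sim} and used as a black box to set up Definition~\ref{d:push_lip}. So there is no ``paper's own proof'' to compare against.

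Your outline is the standard argument (and is essentially what is done in \cite{Sim}): localize to Lipschitz graphs, apply the Lusin--Whitney $C^1$ approximation to each graph map, and take the $C^1$ graphs as the $\Sigma_i$. One small point to tighten: in your last paragraph you verify orientation compatibility only at points of the graph over $K_{\alpha,k}$, but the set-theoretic intersection $\Sigma\cap\Sigma_{\alpha,k}$ may contain further points (where $f_\alpha(x)=g_{\alpha,k}(x)$ accidentally, with $x\notin K_{\alpha,k}$). The clean fix is to replace each $\Sigma_{\alpha,k}$ by its restriction to the graph over $K_{\alpha,k}$ (or any open neighborhood on which the orientations still match), or simply to read the conclusion as it is actually used in the paper: the $\Sigma_i$ cover $\Sigma$ up to an $\cH^m$-null set, and on that covering the tangent planes and orienting $m$-vectors agree. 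With that caveat, your argument is correct.
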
 

\begin{definition}[$Q$-valued push-forward of Lipschitz submanifolds]\label{d:push_lip}
Let $\Sigma \subset\R^N$ be a Lipschitz oriented submanifold, $M\subset \Sigma$ a
measurable subset and $F:M\to\Iqs$ a proper Lipschitz map. Consider the $\{\Sigma_i\}$ 
of Theorem \ref{t:Rad-Whitney} and set $F_i := F |_{M\cap \Sigma_i}$. Then,
we define the push-forward of $M$ through $F$ as the integer rectifiable current
$\mathbf{T}_F := \sum_i \mathbf{T}_{F_i}$.
\end{definition}

The aboved definition can be extended 
to $Q$-valued pushforwards of general rectifiable currents in a straightforward way: however this will never
be used in the papers \cite{DS3, DS4, DS5} and thus goes beyond the scope of our work. 
The following conclusion is a simple consequence of Theorem \ref{t:Rad-Whitney}
and classical arguments in geometric measure theory (cf.~\cite[Section 27]{Sim}).

\begin{lemma}\label{l:repr_for_Lip}
Let $M, \Sigma$ and $F$ be as in Definition \ref{d:push_lip} and consider
a Borel unitary $m$-vector $\vec{e}$ orienting $\Sigma$. Then $\mathbf{T}_F$ is
a well-defined integer rectifiable current for which all the conclusions of Proposition
\ref{p:repr_formula} hold.
\end{lemma}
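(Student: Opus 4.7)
The strategy is to reduce everything to the $C^1$ case already handled in Proposition \ref{p:repr_formula}. Starting from the cover $\{\Sigma_i\}$ given by Theorem \ref{t:Rad-Whitney}, I would first replace it by the pairwise disjoint family $\Sigma_i' := \Sigma_i \setminus \bigcup_{k<i} \Sigma_k$. These are still Borel subsets of $C^1$ submanifolds (hence rectifiable and locally $C^1$ up to $\mathcal{H}^m$-null sets), and discarding $\mathcal{H}^m$-null sets does not change any push-forward current. With this reduction, the union $\bigcup_i \Sigma_i'$ covers $\Sigma$ up to an $\mathcal{H}^m$-null set, and by Theorem \ref{t:Rad-Whitney} the $C^1$ orientation of each $\Sigma_i'$ agrees with $\vec{e}|_{\Sigma_i'}$.

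\textbf{Well-definedness and representation formula.} For each $F_i = F|_{M \cap \Sigma_i'}$, Proposition \ref{p:repr_formula} yields a representation
\begin{equation*}
\mathbf{T}_{F_i}(\omega) = \int_{M \cap \Sigma_i'} \sum_{l=1}^Q \langle \omega(F^l(x)), DF^l(x)_\sharp \vec{e}(x)\rangle\, d\mathcal{H}^m(x).
\end{equation*}
The properness of $F$ gives, exactly as in the derivation preceding \eqref{e:repr_formula}, the bound $\sum_i |\mathbf{T}_{F_i}(\omega)| \le \mathrm{Lip}(F)\|\omega\|_\infty \sum_{j=1}^Q \mathcal{H}^m((F^j)^{-1}(\supp\omega)) < \infty$, so the series defining $\mathbf{T}_F$ converges absolutely. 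Summing over $i$ and exchanging the sum with the integral, the disjointness of the $\Sigma_i'$ gives
\begin{equation*}
\mathbf{T}_F(\omega) = \int_M \sum_{l=1}^Q \langle \omega(F^l(x)), DF^l(x)_\sharp \vec{e}(x)\rangle\, d\mathcal{H}^m(x),
\end{equation*}
which is \eqref{e:repr_formula}. The right-hand side depends only on $\Sigma$, $M$, $F$ and the Borel orientation $\vec{e}$, and not on the choice of cover; testing with arbitrary $\omega \in \mathcal{D}^m(\R^n)$ therefore shows that $\mathbf{T}_F$ is independent of $\{\Sigma_i\}$.

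\textbf{Inheritance of (R1)--(R3).} Once \eqref{e:repr_formula} is in hand, (R1) follows because $\im(F)$ is the countable union of the rectifiable sets $\im(F_i)$; (R2) and (R3) follow by applying Proposition \ref{p:repr_formula} piece by piece to each $F_i$, using that almost every $p \in \im(F)$ lies in the image of at most one $\Sigma_i'$-piece for each preimage parameter, so the orientations \eqref{e:orientazione} and multiplicities $\Theta(p)$ add up without ambiguity. That $\mathbf{T}_F$ is integer rectifiable then follows from the standard criterion (cf.~\cite[Section 27]{Sim}) applied to the representation \eqref{e:repr_formula}, since each $DF^l(x)_\sharp \vec e(x)$ is a (possibly zero) integer multiple of the unit simple $m$-vector $\vec\tau$ at $F^l(x)$ by the area formula.

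\textbf{Main obstacle.} The only nontrivial point is the independence of the decomposition: a priori different $C^1$ covers of $\Sigma$ could produce different currents, and one cannot directly refine two covers to a common $C^1$ refinement. The representation formula bypasses this issue cleanly, because the integral on the right-hand side of \eqref{e:repr_formula} is intrinsic to $\Sigma$ (it uses only the a.e.~defined differential of $F$ through charts and the fixed Borel orientation $\vec{e}$). Everything else is bookkeeping: absolutely convergent countable sums of push-forwards on disjoint $C^1$ pieces, combined with the area formula piece by piece.
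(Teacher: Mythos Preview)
Your proposal is correct and follows exactly the approach the paper indicates: the paper does not give a detailed proof but simply asserts that the lemma is ``a simple consequence of Theorem~\ref{t:Rad-Whitney} and classical arguments in geometric measure theory (cf.~\cite[Section~27]{Sim}),'' and your disjointification of the $C^1$ cover followed by summing the piecewise representations \eqref{e:repr_formula} is precisely how one fills in those classical arguments. The only quibble is your last sentence---$DF^l(x)_\sharp\vec{e}(x)$ is not literally an integer multiple of $\vec{\tau}$, but the integer-rectifiability of $\mathbf{T}_F$ follows directly from the fact that it is a locally finite sum of the integer rectifiable currents $\mathbf{T}_{F_i}$.
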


As for the classical push-forward, $\mathbf{T}_F$ is invariant under
bilipschitz change of variables.

\begin{lemma}[Bilipschitz invariance]\label{l:biLipschitz_inv} Let $F: \Sigma\to \Iqs$ 
be a Lipschitz and proper map, $\Phi: \Sigma'\to \Sigma$ a
bilipschitz homeomorphism and $G:= F\circ \Phi$. Then, $\mathbf{T}_F = \mathbf{T}_G$.
\end{lemma}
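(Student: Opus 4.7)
My plan is to reduce the statement to the classical (single-valued) bilipschitz invariance of push-forwards, which in turn is a standard consequence of Rademacher's theorem and the area formula. First, using Definition~\ref{d:push_lip} and Theorem~\ref{t:Rad-Whitney}, it suffices to prove the claim when $\Sigma'$ is a single $C^1$ oriented piece: chop $\Sigma'$ into $C^1$ pieces $\Sigma'_k$, deal with each $F\circ\Phi|_{\Sigma'_k}$ separately, and sum. The image $\Phi(\Sigma'_k)\subset\Sigma$ is only Lipschitz, but that is already covered by Definition~\ref{d:push_lip} applied to $\bT_F$.

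Next, apply Lemma~\ref{l:chop} to $F$ on $M:=\Phi(\Sigma'_k)$ to obtain a partition $\{M_i\}$ with Lipschitz selections $f_i^j:M_i\to\R^n$ satisfying $F|_{M_i}=\sum_j \a{f_i^j}$. Set $M'_i := \Phi^{-1}(M_i)$ and $g_i^j := f_i^j\circ\Phi$. Since $\Phi$ is a bijective bilipschitz map, the $M'_i$ form a measurable partition of $\Sigma'_k$, the $g_i^j$ are Lipschitz with $\Lip(g_i^j)\leq \Lip(F)\Lip(\Phi)$, and the pointwise distinctness property of the $f_i^j$'s transfers to the $g_i^j$'s. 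Thus $\{M'_i, g_i^j\}$ is a valid decomposition of $G$ in the sense of Lemma~\ref{l:chop}. By Proposition~\ref{p:repr_formula}, I am free to use these specific decompositions in the defining formula and therefore write
\[
\bT_F = \sum_{i,j}(f_i^j)_\sharp \a{M_i},\qquad \bT_G = \sum_{i,j}(g_i^j)_\sharp \a{M'_i}.
\]

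The last step is to invoke, branch by branch, the classical identity $(f_i^j\circ \Phi)_\sharp \a{M'_i}=(f_i^j)_\sharp \a{M_i}$. This is standard: Rademacher's theorem yields the chain rule $Dg_i^j = Df_i^j(\Phi)\cdot D\Phi$ a.e.~on $M'_i$, and the area formula absorbs the Jacobian factor $|J\Phi|$ produced by $D\Phi_\sharp \vec{e}'$ against the orienting $m$-vector $\vec{e}$ of $\Sigma$, cf.~\cite[Remark 26.21(3) and Section 27]{Sim}. Summing over $i,j$ and over the $C^1$ pieces $\Sigma'_k$ yields $\bT_F=\bT_G$.

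The main bookkeeping point, and the only place where one must be careful, is the orientation: one has $D\Phi(y)_\sharp \vec{e}'(y) = \sigma(y)|J\Phi(y)|\,\vec{e}(\Phi(y))$ with $\sigma\in\{\pm 1\}$ measurable, so the statement implicitly asks that $\Sigma'$ be oriented so that $\sigma\equiv +1$ a.e., which is the only convention consistent with the classical single-valued case and is harmless since we are free to choose the orientation on $\Sigma'$. Apart from this, I foresee no real obstacle: the argument is a routine reduction from the $Q$-valued to the single-valued setting, made possible precisely because the decomposition lemma lets me treat $\bT_F$ as a locally finite superposition of ordinary Lipschitz push-forwards.
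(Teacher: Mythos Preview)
Your proposal is correct and follows essentially the same route as the paper: both reduce the $Q$-valued statement to the classical single-valued bilipschitz invariance of push-forwards via the Decomposition Lemma~\ref{l:chop} and the definition of $\bT_F$. The paper's proof is a one-line invocation of this reduction, while you have spelled out the bookkeeping (including the orientation convention) in more detail, but the underlying argument is identical.
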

\begin{proof} The lemma follows trivially from the corresponding result for classical 
push-forwards (see \cite[4.1.7 \& 4.1.14]{Fed}),
the Decomposition Lemma~\ref{l:chop} and the definition of $Q$-valued push-forward.
\end{proof}

We will next use
the area formula to compute explicitely the mass of $\bT_F$.
Following standard notation, we will
denote by $\bJ F^j (x)$ the Jacobian determinant of $DF^j$, i.e. the number
\[
\left|DF^j (x)_\sharp \vec{e}\,\right| = \sqrt{\det ((DF^j(x))^T \cdot DF^j (x))}
\]

\begin{lemma}[$Q$-valued area formula]\label{l:area}
Let $\Sigma, M$ and $F = \sum_j \a{F^j}$ be as in Definition \ref{d:push_lip}.
Then, for any bounded Borel function $h: \R^n \to [0, \infty[$, we have
\begin{gather}\label{e:mass}
\int h (p)\,  d \|\mathbf{T}_F\| (p) \leq \int_M \sum_j h (F^j (x))\, \bJ F^j (x) \, d\cH^m(x)  \, .
\end{gather}
Equality holds in \eqref{e:mass} if there is a set $M'\subset M$ 
of full measure for which 
\begin{equation}\label{e:no_cancellation}
\langle DF^j (x)_\sharp \vec{e} (x), DF^i (y)_\sharp \vec{e} (y) \rangle \geq 0 \qquad 
\forall x,y\in M' \; \mbox{and}\; i, j \;\mbox{with}\; F^i (x) = F^j (y)\, .
\end{equation}
If \eqref{e:no_cancellation} holds the formula is valid also for
bounded {\em real}-valued Borel $h$ with compact support.  
\end{lemma}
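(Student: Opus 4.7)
The plan is to reduce the $Q$-valued area formula to the classical single-valued one, applied to the Lipschitz pieces produced by the Decomposition Lemma. First I would invoke Theorem \ref{t:Rad-Whitney} to cover $\Sigma$ by countably many oriented $C^1$ submanifolds; since both sides of \eqref{e:mass} are $\sigma$-additive over such a cover and since $\bT_F$ is defined additively in Definition \ref{d:push_lip}, it suffices to treat the case where $\Sigma$ is $C^1$. Lemma \ref{l:chop} then yields a countable partition $\{M_i\}$ and single-valued Lipschitz selections $f_i^j:M_i\to\R^n$ with $F|_{M_i}=\sum_{j=1}^Q\llbracket f_i^j\rrbracket$. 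The classical area formula (cf.~\cite[Section 8]{Sim}) applied to each $f_i^j$ gives
\[
\int_{M_i} h\bigl(f_i^j(x)\bigr)\,\bJ f_i^j(x)\, d\cH^m(x)=\int_{\R^n} h(p)\,N_{ij}(p)\, d\cH^m(p),
\]
with $N_{ij}(p):=\#\{x\in M_i:f_i^j(x)=p\}$. Summing over $i,j$ and noting that on each $M_i$ the multisets $\{(f_i^j(x),\bJ f_i^j(x))\}_{j=1}^Q$ and $\{(F^j(x),\bJ F^j(x))\}_{j=1}^Q$ coincide for a.e.~$x$ (by Lemma \ref{l:chop}(c) together with $F|_{M_i}=\sum_j\llbracket f_i^j\rrbracket$, after a measurable relabeling), the right-hand side of \eqref{e:mass} equals $\int_{\R^n} h(p)\,N(p)\,d\cH^m(p)$, where $N(p):=\sum_{i,j}N_{ij}(p)$.

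For the left-hand side I would use Proposition \ref{p:repr_formula}: since $\|\bT_F\|=|\Theta|\,\cH^m\res\im(F)$ and, by (R2), each summand $\langle \vec\tau(p), Df_i^j(x)_\sharp\vec e(x)/|Df_i^j(x)_\sharp\vec e(x)|\rangle$ appearing in the expression for $\Theta(p)$ in (R3) equals $\pm1$ at $\cH^m$-a.e.~$p\in\im(F)$, the triangle inequality immediately yields $|\Theta(p)|\leq N(p)$, which establishes \eqref{e:mass}.

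For the equality case under \eqref{e:no_cancellation}, write $\vec\tau(p)=\sigma_{ijx}\,Df_i^j(x)_\sharp\vec e(x)/|Df_i^j(x)_\sharp\vec e(x)|$ with $\sigma_{ijx}\in\{\pm1\}$; then for any two preimages $(i,j,x)$ and $(i',j',y)$ of the same $p$ with positive Jacobians,
\[
\bigl\langle Df_i^j(x)_\sharp\vec e(x),\, Df_{i'}^{j'}(y)_\sharp\vec e(y)\bigr\rangle = \sigma_{ijx}\,\sigma_{i'j'y}\,\bigl|Df_i^j(x)_\sharp\vec e(x)\bigr|\,\bigl|Df_{i'}^{j'}(y)_\sharp\vec e(y)\bigr|,
\]
so the hypothesis (transferred from $F^j$ to $f_i^j$ via the same multiset identification) forces $\sigma_{ijx}=\sigma_{i'j'y}$. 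Hence all signs in the sum defining $\Theta(p)$ coincide, $|\Theta(p)|=N(p)$, and equality in \eqref{e:mass} follows. The extension to real-valued bounded $h$ with compact support is then routine: decomposing $h=h^+-h^-$, both equalities hold for the nonnegative, bounded, Borel functions $h^\pm$, and both integrals are finite because $\|\bT_F\|$ is locally finite and $\bJ F^j\leq \Lip(F)^m$.

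The only delicate step is the bookkeeping identifying the local selections $\{f_i^j\}$ with the global selection $\{F^j\}$ so that pushforward vectors, Jacobians and signs match pointwise a.e.; this is handled by choosing, on each $M_i$, a measurable permutation aligning the two decompositions, which is possible because both yield the same $Q$-valued map on $M_i$. Beyond this, the argument is simply the classical area formula combined with a triangle inequality and the structural description of $\bT_F$ furnished by Proposition \ref{p:repr_formula}.
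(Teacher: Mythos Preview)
Your proposal is correct and follows essentially the same route as the paper's proof: decompose via Lemma~\ref{l:chop}, apply the classical area formula on each piece, sum, and for the equality case observe that hypothesis~\eqref{e:no_cancellation} forces all orientation signs in the formula for $\Theta$ to agree. The paper phrases the inequality step slightly differently---it uses subadditivity of mass under $\bT_F=\sum_{i,j}T_{ij}$ together with the classical bound $\int h\,d\|T_{ij}\|\le\int_{M_i}h(f_i^j)\,\bJ f_i^j$, rather than your explicit counting function $N(p)$ and the pointwise triangle inequality $|\Theta(p)|\le N(p)$---but these are two packagings of the same computation.
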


\begin{proof} Let $h: \R^n\to [0, \infty[$ be a Borel function.
Consider a decomposition as in the Decomposition Lemma \ref{l:chop}
and the integer rectifiable currents $T_{ij}$ of \eqref{e:push_forward}.
By the classical area formula, see \cite[Remark 27.2]{Sim}, we have
\begin{equation}\label{e:apezzi}
\int h (p)\,  d \|T_{ij}\| (p) \leq \int_{M_i} h (f^j_i (x)) \bJ f^j_i (x) \, d\cH^m(x).
\end{equation}
Summing this inequality over $i$ and $j$ and using Lemma \ref{l:chop}(c),
we easily conclude \eqref{e:mass}.
When \eqref{e:no_cancellation} holds, we can choose $\vec{\tau}$ of Proposition
\ref{p:repr_formula} such that the identity \eqref{e:orientazione} has always the $+$ sign. Define
$\Theta_{ij}(p) := \cH^0 (\{x: f^j_i (x) = p\}$. We then conclude from Proposition \ref{p:repr_formula}(R3)
that $\Theta (p) = \sum_{i,j} \Theta_{ij} (p)$ for $\cH^m$-a.e.~$p\in \im (F)$. 
On the other hand, again by \cite[Remark 27.2]{Sim},
equality holds in \eqref{e:apezzi} and, moreover, we have the identities
$\|T_{ij}\| = \Theta_{ij} \cH^m \res \im (f^j_i)$,
$\|\bT_F\| = \Theta \cH^m \res \im (F)$. This easily implies
the second part of the lemma and hence completes the proof.
\end{proof}

A particular class of push-forwards are given by graphs.

\begin{definition}[$Q$-graphs]\label{d:Q-graphs}
Let $\Sigma, M$ and $f= \sum_i \a{f_i}$ be as in Definition~\ref{d:push_lip}. Define 
the map $F: M \to \Iq(\R^{N+n})$ as $F (x):= \sum_{i=1}^Q \a{(x, f_i (x))}$. $\bT_F$ is 
the {\em current associated to the graph $\gr (f)$} and will be denoted by $\bG_f$. 
\end{definition}

Observe that, if $\Sigma$, $f$ and $F$ are as in Definition \ref{d:Q-graphs}, then
the condition \eqref{e:no_cancellation} is always trivially satisfied. Moreover, when
$\Sigma = \R^m$ the well-known Cauchy-Binet formula gives
\[
(\bJ F^j)^2 = 1 + \sum_{k=1}^m \sum_{A \in M^k (DF^j)} (\det A)^2\, ,
\]
where $M^k (B)$ denotes the set of all $k\times k$ minors of the matrix $B$. Lemma \ref{l:area}
gives then the following corollary in the case of $Q$-graphs

\begin{corollary}[Area formula for $Q$-graphs]\label{c:massa_grafico}
Let $\Sigma = \R^m$, $M\subset \R^m$ and $f$ be as in Definition \ref{d:Q-graphs}. Then,
for any bounded compactly supported Borel $h: \R^{m+n}\to \R$, we have
\begin{equation}\label{e:massa_grafico}
\int h(p)\, d\|\bG_f\| (p) = \int_M \sum_i h (x, f_i (x)) 
\Big(1 + \sum_{k=1}^m \sum_{A \in M^k (DF^j)} (\det A)^2\Big)^{\frac{1}{2}}\,dx .
\end{equation}
\end{corollary}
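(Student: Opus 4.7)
The plan is to apply the $Q$-valued area formula of Lemma~\ref{l:area} to the $Q$-valued graph map $F(x) := \sum_i \a{(x, f_i(x))}$, which by Definition~\ref{d:Q-graphs} satisfies $\bT_F = \bG_f$. To conclude \eqref{e:massa_grafico} I need only check two things: first, that the non-cancellation hypothesis \eqref{e:no_cancellation} is satisfied so that Lemma~\ref{l:area} yields an equality valid for signed integrands; second, that the Jacobian factor $\bJ F^i(x)$ equals $\bigl(1+\sum_{k=1}^m\sum_{A\in M^k(Df_i)}(\det A)^2\bigr)^{1/2}$.

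The non-cancellation check is essentially automatic for graph maps, and I would carry it out as follows. Fix a decomposition $\{M_i, f_i^j\}$ of $f$ as in Lemma~\ref{l:chop}, let $M'\subset M$ be a full-measure subset on which every $f_i^j$ is differentiable, and set $F_i^j(x):=(x,f_i^j(x))$. Suppose $x\in M_i$, $y\in M_{i'}$ and $F_i^j(x)=F_{i'}^{j'}(y)$. The first $m$ coordinates force $x=y$; the disjointness of the pieces forces $i=i'$; and then part~(b) of Lemma~\ref{l:chop} gives $f_i^j\equiv f_i^{j'}$ on $M_i$, hence $Df_i^j(x)=Df_i^{j'}(x)$. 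The inner product in \eqref{e:no_cancellation} therefore reduces to $|DF_i^j(x)_\sharp \vec e|^2\geq 0$, as required.

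For the Jacobian identity, at a differentiability point $x$ of $f_i$ the differential $DF^i(x):\R^m\to\R^{m+n}$ has the block form $\bigl(\begin{smallmatrix}I_m\\ Df_i(x)\end{smallmatrix}\bigr)$, so $(DF^i)^T DF^i=I_m+(Df_i)^TDf_i$. I would then apply the Cauchy--Binet formula to $DF^i$ to expand
\[
\bJ F^i(x)^2 = \det\bigl((DF^i)^T DF^i\bigr) = \sum_{|S|=m} \det\bigl((DF^i)_S\bigr)^2,
\]
where $S$ ranges over the $m$-element subsets of $\{1,\ldots,m+n\}$ indexing rows of $DF^i$. The term $S=\{1,\ldots,m\}$ contributes $\det(I_m)^2=1$; every other $S$ selects $m-k$ rows from the identity block and $k$ rows from $Df_i$ for some $1\leq k\leq m$, and Laplace expansion along the identity rows reduces the corresponding minor, up to sign, to the $k\times k$ minor of $Df_i$ built from the chosen rows and the complementary columns. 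This correspondence is a bijection onto $\bigcup_{k=1}^m M^k(Df_i)$; squaring and summing produces exactly the formula claimed.

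The only mildly delicate step is the combinatorial bookkeeping in the Cauchy--Binet expansion that identifies the $m$-minors of $DF^i$ with the full collection of $k$-minors of $Df_i$ for $k=1,\ldots,m$. The invocation of Lemma~\ref{l:area} and the verification of \eqref{e:no_cancellation} are essentially immediate, since a graph is determined by its base point and the Decomposition Lemma propagates the coincidence $f_i^j(x)=f_i^{j'}(x)$ to equality of the differentials.
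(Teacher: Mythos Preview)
Your proposal is correct and follows exactly the approach the paper takes: the paper does not give a separate proof of the corollary but simply records, in the paragraph preceding it, that \eqref{e:no_cancellation} is ``trivially satisfied'' for $Q$-graphs and that the Cauchy--Binet formula yields the stated expression for $(\bJ F^j)^2$, so that the corollary follows from Lemma~\ref{l:area}. Your write-up merely unpacks these two remarks---using Lemma~\ref{l:chop}(b) to force $Df_i^j(x)=Df_i^{j'}(x)$ when the graph points coincide, and carrying out the combinatorics of the Cauchy--Binet expansion for the block matrix $\bigl(\begin{smallmatrix}I_m\\ Df_i\end{smallmatrix}\bigr)$---so there is no substantive difference.
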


\section{Boundaries}\label{s:boundaries}
In the classical theory of currents, when $\Sigma$ is a Lipschitz manifold with Lipschitz boundary
and $F: \Sigma \to \R^N$ is Lipschitz and proper, then
$\partial (F_\sharp \a{\Sigma}) = F_{\sharp} \a{\partial \Sigma}$ (see \cite[4.1.14]{Fed}). 
This result can be extended to multiple-valued functions.

\begin{theorem}[Boundary of the push-forward]\label{t:commute}
Let $\Sigma$ be a Lipschitz submanifold of $\R^N$ with Lipschitz boundary,
$F:\Sigma \to \Iqs$ a proper Lipschitz function and $f= F\vert_{\de\Sigma}$.
Then, $\partial \mathbf{T}_F = \mathbf{T}_f$.
\end{theorem}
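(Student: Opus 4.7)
The strategy is to reduce to the case of a $C^1$ oriented $\Sigma$, apply the classical boundary--push-forward identity on each Lipschitz single-valued branch produced by the Decomposition Lemma, and then show that all contributions coming from interior pieces of the partition cancel out.

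First I would use Definition~\ref{d:push_lip} and Theorem~\ref{t:Rad-Whitney}, together with the additivity of both $\partial$ and the push-forward, to reduce to the case where $\Sigma$ is a $C^1$ oriented submanifold with Lipschitz boundary; by Lemma~\ref{l:biLipschitz_inv} I may further work in local charts and assume $\Sigma$ is a bounded open subset of a half-space in $\R^m$. Fix a test form $\omega\in\cD^{m-1}(\R^n)$ and apply Lemma~\ref{l:chop} to $F$ to obtain a countable partition $\{M_i\}$ of $\Sigma$ and Lipschitz selections $f_i^j:M_i\to\R^n$ satisfying $F|_{M_i}=\sum_j\a{f_i^j}$. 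On each piece the classical boundary--push-forward identity \cite[4.1.14]{Fed} gives $\partial((f_i^j)_\sharp\a{M_i})=(f_i^j)_\sharp\partial\a{M_i}$, and summing over $i,j$ writes $\partial\bT_F(\omega)$ as the sum of contributions coming from the portions of $\partial M_i$ lying on $\partial\Sigma$—which by construction reassemble into $\bT_f(\omega)$ via Definition~\ref{d:push_forward} applied to $f$—plus contributions from the portions of $\partial M_i$ lying in the interior of $\Sigma$.

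The main obstacle, and the heart of the argument, is showing that the interior contributions cancel. At a common interface between two adjacent pieces $M_i, M_{i'}$, continuity of $F$ forces the unordered $Q$-tuples $\{f_i^j\}_j$ and $\{f_{i'}^j\}_j$ to agree $\cH^{m-1}$-almost everywhere; once matched branch-by-branch, the two pushed-forward boundary pieces carry opposite orientations as seen from the two sides and cancel pairwise. The delicate technical point is that the partition from Lemma~\ref{l:chop} is only measurable, so its interfaces need not be rectifiable in any useful sense. I plan to bypass this by approximation: either refine the covering in the proof of Lemma~\ref{l:chop}, using a Whitney-type decomposition combined with \cite[Proposition~1.6]{DS1}, so that each $M_i$ is open with Lipschitz boundary and the cancellation above is a direct consequence of integration by parts; or, alternatively, approximate $F$ uniformly by $Q$-valued Lipschitz maps whose branches are globally well-separated, for which $Q$ global single-valued Lipschitz selections exist and the classical Stokes theorem applies branch-by-branch, and then pass to the limit using the stability of $\bT_{(\cdot)}$ under uniform convergence of Lipschitz maps with equibounded Lipschitz constants together with the continuity of $\partial$ in the flat topology.
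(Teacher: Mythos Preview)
Your approach diverges substantially from the paper's, and both of your proposed fixes for the central difficulty have genuine problems.

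Fix (b) fails for a topological reason: the approximation you want does not exist in general. Take $Q=2$, $\Sigma=D$ the closed unit disk in $\R^2\cong\C$, and $F(re^{i\theta})=\a{re^{i\theta/2}}+\a{-re^{i\theta/2}}$. One checks directly that $\cG(F(z),F(w))\leq\sqrt{2}\,|z-w|$, so $F$ is Lipschitz. On $\partial D$ the two sheets are at distance $2$ and following one of them once around swaps them; any $F_k$ with $\sup_x\cG(F_k(x),F(x))<1$ inherits this nontrivial monodromy on $\partial D$, hence $F_k$ cannot admit two global continuous (let alone Lipschitz) selections. Moreover, the ``stability of $\bT_{(\cdot)}$ under uniform convergence with equibounded Lipschitz constants'' that you plan to invoke at the end is itself not a free fact in the multivalued setting: for graphs it is exactly the convergence statement the paper isolates as claim (C) and proves by a separate induction on $Q$. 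Fix (a) is more promising but, as written, does not handle the closed set $M_0=\{x:F(x)=Q\a{y(x)}\}$: a Whitney-type decomposition of $\Sigma\setminus M_0$ produces pieces whose boundaries accumulate on $M_0$, and when $M_0$ is irregular the total $\cH^{m-1}$-measure of these interfaces can be infinite, so the sum of interface contributions need not even be defined. Making this route work would require an induction on $Q$ together with a careful treatment of $M_0$ that you have not supplied.

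For comparison, the paper sidesteps the interface-cancellation issue entirely. It reduces (via triangulation and Lemma~\ref{l:biLipschitz_inv}) to $\Sigma=[0,1]^m$ and then to the graph $\bG_F$ by projecting, and proceeds by induction on the dimension $m$. The base case $m=1$ uses the global Lipschitz selection available on intervals. For the inductive step, on each dyadic subcube $F$ is replaced by the cone-like extension of its boundary values furnished by Lemma~\ref{l:hom}; the boundary of each such piece is computed from the cone formula together with the inductive hypothesis, and the interior interfaces are now flat cube faces, so cancellation is automatic. The passage to the limit as the dyadic scale shrinks is precisely claim (C), established by induction on $Q$.
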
 

The main building block is the following small variant of
\cite[Homotopy Lemma 1.8]{DS1}.

\begin{lemma}\label{l:hom}
There is $c (Q,m)>0$ such that, 
for every closed cube $C\subset\R^{m}$ centered at $x_0$
and every $F\in\Lip(C,\Iqs)$, we can find $G\in\Lip(C,\Iqs)$ satisfying:
\begin{itemize}
\item[(i)] $G\vert_{\de C}=F\vert_{\de C}=: f$, $\Lip(G)\leq c\,\Lip(F)$ and
$\norm{\cG(F,G)}{L^\infty}\leq c\,\Lip(F)\,\diam(C)$;
\item[(ii)] there are Lipschitz 
multi-valued maps $G_j$ and $f_j$ (with $j\in \{1, \ldots, J\}$) such that $G=\sum_{j=1}^{J}\a{G_j}$, 
$f=\sum_{j=1}^J\a{f_j}$ and $\mathbf{G}_{G_j}=\a{(x_0,a_j)}\cone \mathbf{G}_{f_j}$ for some $a_j\in\R^{n}$.
\end{itemize}
\end{lemma}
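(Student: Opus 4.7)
The plan is to construct $G$ as a sum of cones: after splitting $f:=F|_{\de C}$ into Lipschitz pieces $f_j$ whose supports in $\de C \times \R^n$ have small diameter, I extend each $f_j$ inside $C$ by coning radially from the center $x_0$ to a chosen base point $a_j \in \R^n$. This construction is modeled on the proof of Homotopy Lemma 1.8 in \cite{DS1}, but the bookkeeping of the pieces is made explicit so that property (ii) holds by design.

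\textbf{Step 1 (Decomposition of $f$).} First, I decompose $f = \sum_{j=1}^J \a{f_j}$ with $f_j:\de C \to \I{Q_j}(\R^n)$ Lipschitz, $\sum_j Q_j = Q$, $\Lip(f_j) \leq \Lip(F)$, and
$$
\diam\Bigl(\,\bigcup\nolimits_{x \in \de C} \supp(f_j(x))\Bigr) \leq c(Q,m)\,\Lip(F)\,\diam(C).
$$
For $m \geq 2$ the boundary $\de C$ is path-connected, and the compact graph $K := \{(x, y) \in \de C \times \R^n : y \in \supp(f(x))\}$ splits into finitely many connected components $\Gamma_1, \ldots, \Gamma_J$; continuity of $f$ forces the multiplicity of $f(x)$ at $y$ to be locally constant on each $\Gamma_j$, so I set $f_j(x) := \sum_{y:(x,y)\in\Gamma_j}\mathrm{mult}_{f(x)}(y)\,\a{y}$. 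The diameter bound follows from $\diam(\de C) \leq c_m \diam(C)$ combined with integration of $\Lip(f_j)$ along paths in $\de C$ used to connect preimages inside $\Gamma_j$. The case $m = 1$ (where $\de C$ is two points) is handled separately using a one-dimensional Lipschitz selection of $F$ on $C$ to pair endpoint values with displacement at most $\Lip(F)\diam(C)$.

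\textbf{Step 2 (Cone extension and verification).} For each $j$, fix $a_j \in \supp(f_j(p_j))$ for some $p_j \in \de C$. Given $x \in C\setminus\{x_0\}$, let $\tilde x \in \de C$ be the unique boundary point on the ray from $x_0$ through $x$, write $x = x_0 + t(\tilde x - x_0)$ with $t \in (0,1]$, and set
$$
G_j(x) := \sum_k \a{a_j + t(y_k - a_j)} \quad\text{where } f_j(\tilde x) = \sum_k \a{y_k},
$$
extended by $G_j(x_0) := Q_j\a{a_j}$; then $G := \sum_j \a{G_j}$. By design $G|_{\de C} = f$ (take $t=1$), and the image of $G_j$ in $C \times \R^n$ is the cone from $(x_0, a_j)$ over $\gr(f_j)$; applying Proposition~\ref{p:repr_formula} to the radial parametrization of this cone, together with Lemma~\ref{l:biLipschitz_inv}, yields $\bG_{G_j} = \a{(x_0, a_j)} \cone \bG_{f_j}$ and hence (ii). The Lipschitz bound comes from working in the parametrization $(t,\tilde x) \in (0,1]\times\de C$: the radial derivative of $G_j$ is of order $(y_k - a_j)/|\tilde x - x_0|$, bounded by $c(Q,m)\Lip(F)$ thanks to Step~1 and the estimate $|\tilde x - x_0| \geq c_m\diam(C)$ for a cube, while the tangential derivative is $t\cdot D_{\tilde x}f_j$; converting to derivatives in $x$ introduces only bounded factors, since the cube has comparable inradius and circumradius. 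Near $x_0$ one uses the simpler bound $\cG(G_j(x), Q_j\a{a_j}) \leq t\cdot c\Lip(F)\diam(C) \leq c\Lip(F)|x-x_0|$. Finally, the $L^\infty$ bound $\cG(F,G)(x) \leq c\Lip(F)\diam(C)$ follows because both $F(x)$ and $G(x)$ lie within $c\Lip(F)\diam(C)$ of the common value $F(\tilde x) = G(\tilde x)$.

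The main obstacle is Step 1: verifying that the connected-component pieces $f_j$ are genuine Lipschitz multi-valued maps rather than merely set-valued correspondences requires the continuity of $f$ and the structure theory of $Q$-valued maps on connected sets from \cite{DS1}, which guarantees that local multiplicities patch up to yield elements of $\I{Q_j}$. The $m=1$ case must be handled separately since $\de C$ is disconnected and connected-component bookkeeping gives no information across the two endpoints.
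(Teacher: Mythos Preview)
Your overall scheme --- split $f$ into pieces of small vertical diameter, cone each piece to a base point, and verify the cone identity for the resulting graphs --- is exactly the scheme behind the paper's proof, but you implement the first step differently. The paper does not carry out the decomposition of $f$ from scratch: it cites \cite[Lemma~1.8]{DS1}, whose inductive proof on $Q$ (small diameter $\Rightarrow$ cone directly; otherwise a pigeonhole gap splits $f$ into lower-$Q$ pieces) already produces the $f_j$'s and the coned extensions $G_j$'s. The only thing the paper adds is the verification of the cone identity $\bG_G=\a{0}\cone\bG_u$ for a single cone-like extension, which it proves by applying the Decomposition Lemma~\ref{l:chop} to $u$ on $\de C$, extending each single-valued selection radially, and invoking the classical cone formula piece by piece. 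Your Step~2 argues the same identity via a radial bilipschitz change of variables and Lemma~\ref{l:biLipschitz_inv}; this is a legitimate and essentially equivalent route.

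The real departure is your Step~1, where you replace the inductive dichotomy of \cite{DS1} by a connected-component decomposition of $\gr(f)\subset\de C\times\R^n$. This is a nice geometric alternative, but the justification you give has gaps. First, the assertion that ``the multiplicity of $f(x)$ at $y$ is locally constant on each $\Gamma_j$'' is false as stated: for $f(x)=\a{0}+\a{x}$ the multiplicity of $0$ jumps from $1$ to $2$ at $x=0$, yet the graph is connected. What \emph{is} locally constant --- and what you actually need to make $f_j$ a well-defined $Q_j$-valued map --- is the fiberwise total $\sum_{y:(x,y)\in\Gamma_j}\mathrm{mult}_{f(x)}(y)$; this follows from $\Gamma_j$ being clopen in $\gr(f)$ together with connectedness of $\de C$. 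Second, the diameter bound does not follow from ``integration of $\Lip(f_j)$ along paths in $\de C$ used to connect preimages inside $\Gamma_j$'': a path in $\Gamma_j$ may project to a path in $\de C$ of uncontrolled length (for $m=2$, where $\de C\simeq S^1$, it can wind around arbitrarily often). You need a monodromy/chain argument showing that at most $Q$ traversals of $\de C$ connect any two sheets of a connected $\Gamma_j$, which then yields $\diam(\Gamma_j)\le c(Q,m)\Lip(F)\diam(C)$. Both issues are repairable, and once repaired your approach does prove the lemma; the paper's approach simply sidesteps them by leaning on the work already done in \cite{DS1}.
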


\begin{proof}
The proof of (i) is contained in \cite[Lemma 1.8]{DS1}.
Concerning (ii), the proof is contained in the inductive argument of
\cite[Lemma 1.8]{DS1}, it suffices to complement the arguments there with the
following fact:
if $C= [-1,1]^m$, $u\in \Lip (\partial C, \Iqs)$ and
$G (x)=\sum_i\a{\|x\| u_i\left(\frac{x}{\|x\|}\right)}$ is the ``cone-like''
extension of $u$ to $C$ (where $\|x\|=\sup_i |x_i|$), then
$\mathbf{G}_G=\a{0}\cone \mathbf{G}_u$. 
The proof of this claim is a simple consequence of  the Decomposition
Lemma~\ref{l:chop} and the very definition of $\mathbf{G}_u$. Consider, indeed, a countable
measurable partition $\cup_i M_i = \de C$ and Lipschitz functions $u^j_i$ with
$\sum_j \a{u^j_i} = u|_{M_i}$. According to our definitions, 
$\bG_u = \sum_{i,j} (u^j_i)_{\sharp} \a{M_i} =: \sum_{i,j} T_{ij}$. Consider now
for each $i$ the set $R_i := \{\lambda x:
x\in M_i, \lambda\in ]0,1]\}$ and define $G^j_i (\lambda x) := \lambda u^j_i (x)$ for every
$x\in M_i$ and $\lambda\in ]0,1]$. The sets $R_i$ are a measurable decomposition of
$C\setminus \{0\}$ and we have $\sum_j \a{G^j_i} = G|_{R_i}$.
Therefore, setting $S_{ij} := (G^j_i)_\sharp \a{R_i}$, we have
$\bG_G = \sum_{i,j} S_{ij}$. On the other hand, by the classical theory of currents
$S_{ij} = \a{0} \cone T_{ij}$ (see \cite[Section 4.1.11]{Fed}). Since 
$\sum_{ij} (\mass (S_{ij}) + \mass (T_{ij}))<\infty$, the desired claim follows.
\end{proof}

\begin{proof}[Proof of Theorem \ref{t:commute}] The proof is by induction on the dimension $m$.
Since every Lipschitz manifold can be triangulated and the statement is invariant
under bilipschitz homemorphisms, it suffices to prove the theorem when
$\Sigma = [0,1]^m$. Next, given a classical Lipschitz map
$\Phi: \R^N\to \R^k$, let
$\Phi\circ F$ be the multiple-valued map $\sum_i \a{\Phi (F_i)}$ 
(cf.~\cite[Section 1.3.1]{DS1}). If $F$ is a classical Lipschitz map, then $\bT_{\Phi\circ F}
= \Phi_\sharp F_\sharp \a{\Sigma} = \Phi_\sharp \bT_F$ (cf.~\cite[4.1.14]{Fed}). The same identity
holds for $Q$-valued map, as the Decomposition Lemma~\ref{l:chop} easily reduces it to the
single-valued case.
Then, if $\p: \R^m\times \R^{m+n}\to \R^{m+n}$ is the orthogonal
projection on the second components, we have 
$\p_\sharp \mathbf{G}_F = \mathbf{T}_F$. Given the classical
commutation of boundary and (single-valued) push-forward (see \cite[Section 4.1.14]{Fed}) we
are then reduced to proving he identity $\partial \bG_F = \bG_f$.

We turn therefore to the case $\mathbf{G}_F$. The starting step $m=1$ is an obvious
corollary of the Lipschitz selection principle 
\cite[Proposition 1.2]{DS1}.
Indeed, for $F \in \Lip([0,1],\Iqs)$, there exist functions $F_i\in \Lip([0,1],\R^n)$
such that $F = \sum _i \a{F_i}$.
Therefore, $\bT_F = \sum_i \bT_{F_i}$ and
\[
\de \bT_F = \sum _i \de \bT_{F_i} = \sum _i \left(\a{F_i(1)} - \a{F_i(0)}\right)
 = \bT_{f}.
\]
For the inductive argument, consider the dyadic decomposition at scale $2^{-l}$ of $[0,1]^m$:
\[
[0,1]^m=\bigcup_{k\in\{0,\ldots,2^l-1\}^m}Q_{k,l},\quad\text{with}\quad
Q_{k,l}=2^{-l}\left(k+[0,1]^m\right).
\]
In each $Q_{k,l}$, let $u_{k,l}$ be the cone-like extension
given by Lemma \ref{l:hom} of $f_{k,l} := F|_{\partial Q_{k,l}}$. Denote by $u_l$ the $Q$-
function on $[0,1]^m$ which coincides with 
$u_{k,l}$ on each $Q_{k,l}$. Obviously the 
$u_l$'s are equi-Lipschitz and converge uniformly to $F$ by Lemma \ref{l:hom} (i).
Set $T_l:=  \mathbf{G}_{u_l} = \sum_k \mathbf{G}_{u_{k,l}}$.
By the inductive hypothesis $\partial \mathbf{G}_{f_{k,l}} = 0$.
Since $\partial (\a{p} \cone T) = T - \a{p}\cone \partial T$
(see \cite[Section 26]{Sim}), Lemma \ref{l:hom} implies
$\partial \mathbf{G}_{u_{k,l}} = \mathbf{G}_{f_{k,l}}$.
Considering that the boundary faces common to adjacent cubes come with opposite orientations,
we conclude $\de T_l=\mathbf{G}_f$. 
By Corollary \ref{c:massa_grafico}, 
$\limsup_l (\mass (T_l)+\mass (\partial T_l)) < \infty$ and so
the compactness theorem for integral currents (see \cite[Theorem 27.3]{Sim})
guarantees the existence of an integral current $T$ which is the weak limit of a 
subsequence of $\{T_l\}$ (not relabeled).
It suffices therefore to show that:
\begin{itemize}
\item[(C)]  if $\Omega\subset \R^m$ is an open set and $u_l$ is a sequence of Lipschitz 
$Q$-valued maps on $\Omega$ such that 
$u_l$ converge uniformly to some $F$ and 
$T_l:=\bG_{u_l}$ converge to an integral current $T$, then $T=\bG_F$.
\end{itemize}
We will prove (C) by induction over $Q$: the case $Q=1$ is classical (see for instance
\cite[Theorem 2, Section 3.1 in Chapter 3]{GMbook1} and \cite[Proposition 2, Section 2.1 in Chapter 3]{GMbook1}).
We assume (C) 
holds for every $Q^*<Q$ and want to prove it for $Q$.  Fix a sequence as in (C). 
Clearly $T$ is supported in the rectifiable set $\gr (F)$. Fix an 
orthonormal basis $e_1, \ldots, e_m$ of $\R^m$ and extend it to an orthonormal basis 
of $\R^{m+n}$ with positive orientation. Set $\vec{e} = e_1\wedge \ldots \wedge e_m$. 
Thanks to the Lipschitz regularity of $F$, $\gr (F)$ can be oriented by $m$-planes $\vec{\tau}$ 
with the property that 
$\langle \vec{\tau}, \vec{e}\rangle \geq c>0$, where the constant $c$ depends on 
$\Lip (F)$. We have $T = (\gr(F),  \vec\tau, \bar\Theta)$ and 
$\bG_F = (\gr(F), \vec\tau, \Theta)$: we just need to show that $\Theta = \bar\Theta$ $\cH^m$-a.e. on $\gr(F)$.

As observed in Lemma~\ref{l:chop} there is a closed set $M_0$ and a Lipschitz function $f_0$ such that:
\begin{itemize}
\item $F(x) = Q\a{f_0 (x)}$ for every $x\in M_0$;
\item $F$ ``splits'' locally on $\Omega' = \Omega\setminus M_0$ into (Lipschiz) 
functions taking less than $Q$ values.
\end{itemize}
Using the induction hypothesis, it is trivial to verify that 
$T\res \Omega'\times \R^n = \bG_F \res \Omega'\times \R^n$. 
Thus we just need to show that $\bar\Theta (x,f_0(x)) = \Theta (x, f_0(x))$ for $\cH^m$-a.e. 
$x\in M_0$. Consider the orthogonal projection $\p: \R^{m+n}\to \R^m$. 
By the well-known formula for the pusforward of currents (see \cite[Lemma 4.1.25]{Fed}), 
we have $\p_\sharp T = \bar{\Theta}' \a{\Omega}$ and $\p_\sharp \bG_F = \Theta' \a{\Omega}$, where 
\[
\bar\Theta ' (x) = \sum_{(x,y)\in \gr (F)} \bar\Theta (x,y) \qquad \mbox{and}\qquad 
\Theta' (x) = \sum_{(x,y)\in \gr (F)} \Theta (x,y)\, .
\]
Therefore $\bar\Theta' (x) = \bar\Theta (x,f_0 (x))$ and $\Theta' (x) = \Theta (x, f_0 (x))$ for 
$\cH^m$-a.e. $x\in M_0$. On the other hand, by the definition of $\bG_F$ and the very same
formula for the push-forward (i.e.~\cite[Lemma 4.1.25]{Fed})
it is easy to see that $\p_\sharp \bG_F = Q \a{\Omega} = \p_\sharp T_l$. Since
$\p_\sharp T_l$ converges to $\p_\sharp T$, we conclude that 
$\Theta'\equiv Q \equiv \bar{\Theta}'$ $\cH^m$-a.e. on $\Omega$, which in turn implies
$\Theta (x, f_0 (x)) = \bar\Theta (x, f_0 (x))$ for a.e. $x\in M_0$. 
This completes the proof of the inductive step.
\end{proof}

\section{Taylor expansion of the area functional}\label{s:taylor_area}
In this section we compute the Taylor expansion of the
area functional in several forms.
To this aim, we fix the following notation and hypotheses.

\begin{ipotesi}\label{i:tripla_malefica}
We consider the following:
\begin{itemize}
\item[(M)] an open submanifold $\cM\subset \R^{m+n}$ of dimension $m$ with
$\cH^m (\cM)<\infty$, which is the graph of a function $\bm{\varphi}: 
\mathbb R^m\supset \Omega\to \mathbb R^n$ with $\|\bm{\varphi}\|_{C^{3}}\leq \bar{c}$; $A$ and
$H$ will denote, respectively,
the second fundamental form and the mean curvature of $\cM$;
\item[(U)] a regular tubular neighborhood $\bU$ of $\cM$, i.e. the set of points $\{x+y: x\in \cM, y\perp T_x \cM,
|y|<c_0\}$, where the thickness $c_0$ is sufficiently small so that
the nearest point projection $\p:\bU \to \cM$ is well defined and $C^2$; the thickness is supposed to be
larger than a fixed geometric constant;
\item[(N)] a $Q$-valued map $F: \cM \to \Iq (\R^{m+n})$ of the form
\[
\sum_{i=1}^Q \a{F_i (x)}= \sum_{i=1}^Q \a{x+ N_i (x)},
\]
where $N: \cM\to \Iq(\R^{m+n})$ satisfies
$x+ N_i (x)\in \bU$, $N_i (x) \perp T_x \cM$ for every $x$ and $\Lip (N) \leq \bar{c}$.
\end{itemize}
\end{ipotesi}

We recall the notation $\etaa \circ F := \frac{1}{Q} \sum_i F_i$, for every multiple
valued function $F = \sum_i\a{F_i}$.

\begin{theorem}[Expansion of $\mass (\bT_F)$] \label{t:taylor_area}
If $\cM$, $F$ and $N$ are as in Assumption \ref{i:tripla_malefica} and $\bar{c}$
is smaller than a geometric constant, then
\begin{align}
\mass (\mathbf{T}_F) = {}&  Q \, \cH^m (\cM) - Q\int_\cM \langle H, \etaa\circ N\rangle
+ \frac{1}{2} \int_\cM |D N|^2\nonumber\\
&+ \int_\cM \sum_i \Big(P_2 (x,N_i) +  P_3 (x, N_i, DN_i) + R_4 (x, DN_i)\Big),  \label{e:taylor_area}
\end{align}
where
$P_2$, $P_3$ and $R_4$ are $C^1$ functions with the following properties:
\begin{itemize}
\item[(i)] $n\mapsto P_2(x, n)$ is a quadratic form on the normal bundle of $\cM$
satisfying
\begin{equation}\label{e:order_2}
|P_2 (x,n)|\leq C |A (x)|^2 |n|^2 \qquad\quad \forall \; x\in \cM, \;\forall\; n\perp T_x\cM;
\end{equation}
\item[(ii)] $P_3 (x, n, D)= \sum_i L_i (x,n) Q_i (x, D)$, where
$n\mapsto L_i (x,n)$ are linear forms on the normal bundle of $\cM$ and $D\mapsto Q_i (x,D)$ are quadratic forms on the space of ${(m+n)\times(m+n)}$-matrices,
satisfying
\begin{gather*}
|L_i (x,n)||Q_i (x, D)|\leq C |A(x)||n||D|^2 \qquad
\forall x\in \cM, \, \forall n\perp T_x\cM, \, \forall D\,;
\end{gather*}

\item[(iii)] $|R_4 (x,D)| = |D|^3 L (x,D)$, for some function $L$ with $\Lip (L)\leq C$,
which satisfies $L(x,0)=0$ for every $x\in \cM$
and is independent of $x$ when $A\equiv 0$.
\end{itemize}
Moreover, for any Borel function $h: \R^{m+n}\to \R$, 
\begin{equation}\label{e:taylor_aggiuntivo} 
\left| \int h\, d\|\bT_F\| - \int_{\cM} \sum_i h \circ F_i\right|
\leq C \int_{\cM} \Big(\sum_i |A| |h \circ F_i||N_i| + \|h\|_\infty (|DN|^2 + |A| |N|^2)\Big), 
\end{equation}
and, if $h (p) = g (\p (p))$ for some $g$, we have 
\begin{align}
 \left| \int h\, d\|\bT_F\| - Q \int_\cM (1- \langle H, \etaa\circ N\rangle + \textstyle{\frac{1}{2}}|DN|^2) \, g\right|
&\leq C \int_\cM \big(|A|^2 |N|^2 + |DN|^4\big) |g|\, .\label{e:taylor_aggiuntivo_2}
\end{align}
\end{theorem}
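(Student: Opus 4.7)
The plan is to derive all three formulas from the area formula applied sheet-by-sheet. Since $F_i(x)=x+N_i(x)$ with $N_i(x)\perp T_x\cM$, the nearest-point projection on $\bU$ satisfies $\p(F_i(x))=x$; hence $F_i(x)=F_j(y)$ forces $x=y$, and at a fixed $x$ the $m$-vectors $DF_i(x)_\sharp\vec e(x)$ all remain close to $\vec e(x)$ once $\bar c$ is small, so the no-cancellation condition \eqref{e:no_cancellation} holds. Lemma~\ref{l:area} then gives, for every real bounded Borel $h$,
\begin{equation*}
\int h\,d\|\bT_F\|=\int_\cM\sum_i h(F_i(x))\,\bJ F_i(x)\,d\cH^m(x),
\end{equation*}
so everything reduces to a pointwise Taylor expansion of $\bJ F_i$ in $(N_i, DN_i)$.

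Next I would compute $\bJ F_i(x)=\sqrt{\det G}$ in an orthonormal frame $e_1,\ldots,e_m$ of $T_x\cM$: differentiating the normality constraint $\langle N_i, e_j\rangle\equiv 0$ yields $\langle D_{e_k}N_i, e_j\rangle=-\langle A(e_j,e_k),N_i\rangle$, so $G = I + E$ with
\begin{equation*}
E_{jk}=-2\langle A(e_j,e_k),N_i\rangle+\langle D_{e_j}N_i,D_{e_k}N_i\rangle.
\end{equation*}
Expanding $\sqrt{\det(I+E)}=1+\tfrac{1}{2}\mathrm{tr}\,E+\tfrac{1}{8}(\mathrm{tr}\,E)^2-\tfrac{1}{4}\mathrm{tr}(E^2)+R(E)$ with $|R(E)|\leq C|E|^3$, and using $\mathrm{tr}\,E=-2\langle H,N_i\rangle+|DN_i|^2$, the first-order piece alone produces the principal terms $-\langle H,N_i\rangle+\tfrac{1}{2}|DN_i|^2$. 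Every other monomial in the expansion is a product of factors of either $\langle A,N_i\rangle$-type or $\langle DN_i, DN_i\rangle$-type; grouping by the number of $A$-factors yields three classes assembling into $P_2$ (pure $\langle A,N_i\rangle^2$ terms, quadratic in $N_i$ and bounded by $|A|^2|N_i|^2$), $P_3$ (cross terms of the form $\langle A,N_i\rangle\cdot\langle DN_i, DN_i\rangle$, automatically of the bilinear structure $\sum L_i(x,n)Q_i(x,D)$), and $R_4$ (pure $\langle DN_i, DN_i\rangle$ terms together with the portion of $R(E)$ in the same class). When $A\equiv 0$ one has $G=I+DN_i^T DN_i$, so $\bJ F_i$ depends only on $DN_i$, and the representation $R_4(x,D)=|D|^3 L(x,D)$ with $L$ Lipschitz and $L(x,0)=0$ follows from Taylor's integral remainder applied to the smooth scalar $D\mapsto\sqrt{\det(I+D^T D)}-1-\tfrac{1}{2}|D|^2$.

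Finally I would sum the pointwise expansion over $i$, using $\sum_i\langle H,N_i\rangle=Q\langle H,\etaa\circ N\rangle$ and $\sum_i|DN_i|^2=|DN|^2$, to obtain \eqref{e:taylor_area} directly. For \eqref{e:taylor_aggiuntivo}, the difference is $\int\sum_i h(F_i)(\bJ F_i-1)$: the linear term $-\langle H,N_i\rangle$ is bounded by $|h\circ F_i|\cdot|A||N_i|$, and the remaining pieces by $\|h\|_\infty(|DN|^2+|A|^2|N|^2)$, which is stronger than the stated bound since $|A|$ is controlled. For \eqref{e:taylor_aggiuntivo_2} the crucial observation is $\p(F_i(x))=x$, which turns $h\circ F_i$ into $g$ on $\cM$; the full expansion then integrates directly against $g$, with the mixed cubic terms $|A||N||DN|^2$ absorbed into $|A|^2|N|^2+|DN|^4$ by Young's inequality. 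The main obstacle is the bookkeeping in the middle paragraph: verifying term by term that every remainder piece fits the prescribed structure (quadratic in $n$, bilinear $L_iQ_i$, $|D|^3 L$ with $L$ vanishing at $D=0$ and $x$-independent when $A\equiv 0$), and that the cubic remainder $R(E)$ decomposes along the same three classes without introducing unwanted cross-dependence.
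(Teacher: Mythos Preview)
Your approach is essentially the paper's: reduce to the single-sheet case via the area formula and no-cancellation, write the Gram matrix $G=I+E$ with the identity $\langle D_{e_k}N_i,e_j\rangle=-\langle A(e_j,e_k),N_i\rangle$, and expand $\sqrt{\det(I+E)}$. The estimates \eqref{e:taylor_aggiuntivo} and \eqref{e:taylor_aggiuntivo_2} follow exactly as you outline.

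The obstacle you flag, however, is real, and your proposed classification by ``number of $A$-factors'' does not resolve it as written. In the cubic remainder $R(E)$ you will meet terms such as $\langle A,N_i\rangle^3$ (three $A$-factors, cubic in $n$) or $\langle A,N_i\rangle^2\cdot\langle DN_i,DN_i\rangle$ (quadratic in $n$, quadratic in $D$): neither fits $P_2$ (quadratic in $n$, no $D$), $P_3$ (linear in $n$, quadratic in $D$), nor $R_4$ (a function of $D$ alone). The paper's device is to exploit the \emph{dual interpretation} of the tangential entries: the very identity you used says
\[
a_{jk}:=\langle D_{e_j}N_i,e_k\rangle+\langle D_{e_k}N_i,e_j\rangle=-2\langle A(e_j,e_k),N_i\rangle,
\]
so each $a_{jk}$ is simultaneously linear in $DN_i$ (with $x$-dependent coefficients) and linear in $N_i$ (with coefficients bounded by $|A|$). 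The paper therefore keeps $a$ in its $DN$-form, expands $\det(I+a+b)$ and $\sqrt{1+\tau}$ to \emph{third} order in $a$ explicitly, and declares $R_4$ to be everything of order $\geq 4$ in $DN$; since $a$ is linear and $b$ quadratic in $DN$, this $R_4$ is a function of $(x,DN_i)$ only, and reduces to a function of $DN_i$ alone when $A\equiv0$ and the frame is constant. The surviving low-order pieces are then reinterpreted: purely quadratic-in-$a$ terms become $P_2$ via the $N$-reading of both factors, while cubic-in-$a$ and $a\cdot b$ terms are read with exactly one $a$-factor as $\langle A,N_i\rangle$ and the others as linear in $DN_i$, which lands them in the $L_i(x,n)Q_i(x,D)$ form. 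Once you adopt this dual reading, your bookkeeping goes through without further ideas.
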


In particular, as a simple corollary of the theorem above, we have the following.

\begin{corollary}[Expansion of $\mass (\bG_f)$] \label{c:taylor_area}
Assume $\Omega\subset \R^m$ is an open set with bounded measure and $f:\Omega
\to \Iqs$ a Lipschitz map with $\Lip (f)\leq \bar{c}$. Then,
\begin{equation}\label{e:taylor_grafico}
\mass (\mathbf{G}_f) = Q |\Omega| + \frac{1}{2} \int_\Omega |Df|^2 + \int_\Omega \sum_i \bar{R}_4 (Df_i)\, ,
\end{equation}
where $\bar{R}_4\in C^1$ satisfies $|\bar{R}_4 (D)|= |D|^3\bar{L} (D)$ for $\bar{L}$ with $\Lip (\bar{L})\leq C$ and
$\bar L(0) = 0$.
\end{corollary}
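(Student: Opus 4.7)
The plan is to view Corollary \ref{c:taylor_area} as the specialization of Theorem \ref{t:taylor_area} to the trivial ambient geometry where $\cM$ is a flat piece of $\R^m$ embedded horizontally in $\R^{m+n}$. Concretely, I would set $\bm{\varphi}\equiv 0$, so that $\cM := \Omega\times\{0\}\subset \R^{m+n}$ has $\cH^m(\cM)=|\Omega|<\infty$ and $\|\bm{\varphi}\|_{C^3}=0\leq\bar c$, verifying hypothesis (M). Hypothesis (U) is trivial since the ambient is flat and one may take the tubular neighborhood arbitrarily large. The given $Q$-valued map $f$ is lifted to a normal-valued map $N$ on $\cM$ via $N_i(x,0):=(0,f_i(x))$, which is normal to $\cM$ at every point and has $\Lip(N)=\Lip(f)\leq\bar c$, so hypothesis (N) holds. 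Under this identification $F_i(x,0)=(x,0)+N_i(x,0)=(x,f_i(x))$, matching Definition \ref{d:Q-graphs}, so $\bT_F=\bG_f$; moreover, since the tangent space to $\cM$ is horizontal and each $N_i$ is purely vertical, the Hilbert-Schmidt norms agree, $|DN|^2=|Df|^2$.

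Now I would invoke formula \eqref{e:taylor_area}. Flatness forces $A\equiv 0$ and $H\equiv 0$, which trivializes three of the four correction terms: the mean curvature term $-Q\int_\cM\langle H,\etaa\circ N\rangle$ vanishes outright; property (i) gives $|P_2(x,n)|\leq C|A(x)|^2|n|^2=0$, so $P_2\equiv 0$; and property (ii) gives $|L_i(x,n)Q_i(x,D)|\leq C|A(x)||n||D|^2=0$, so $P_3\equiv 0$. Finally, property (iii) states that when $A\equiv 0$ the function $L$ in $R_4(x,D)=|D|^3L(x,D)$ is independent of $x$, which allows me to set $\bar L(D):=L(\cdot,D)$ and $\bar R_4(D):=R_4(\cdot,D)$; the estimate $\Lip(\bar L)\leq C$ and the identity $\bar L(0)=0$ are then inherited directly from Theorem \ref{t:taylor_area}(iii). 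Substituting into \eqref{e:taylor_area} yields \eqref{e:taylor_grafico} in a single line.

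I do not expect a genuine obstacle here: the corollary is designed to be a direct pull-back of the main theorem to the flat graph case, and the proof is essentially bookkeeping of identifications. The only point worth stating explicitly is that the clause in (iii) asserting $x$-independence of $L$ under $A\equiv 0$ is what allows $\bar R_4$ to be a bona fide function of $D$ alone, as required by the statement of the corollary; this is precisely the reason that clause was included in the hypotheses of Theorem \ref{t:taylor_area}.
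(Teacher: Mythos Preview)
Your proposal is correct and follows exactly the same approach as the paper: both reduce to Theorem~\ref{t:taylor_area} by taking $\cM=\Omega\times\{0\}$ (i.e.\ $\bm{\varphi}\equiv 0$), $N_i=(0,f_i)$, and then observe that $A\equiv 0$ kills the $H$-, $P_2$-, and $P_3$-terms while making $R_4$ independent of $x$. Your write-up is simply more explicit about checking the hypotheses and invoking clause~(iii) than the paper's two-line proof.
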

\begin{proof} The corollary is reduced to Theorem \ref{t:taylor_area} by simply setting
$\cM = \Omega\times \{0\}$,
\[
N = \sum_i \a{N_i (x)} := \sum_i \a{(0, f_i (x))}
\quad \mbox{and}\quad
F(x) = \sum_i \a{F_i (x)} = \sum_i \a{(x, f_i (x))}\, .
\]
Since in this case $A$ vanishes, \eqref{e:taylor_area} gives precisely \eqref{e:taylor_grafico}.
\end{proof}

\begin{proof}[Proof of Theorem \ref{t:taylor_area}]
We will in fact prove the statement for $\mass (\mathbf{T}_{F|_V})$,
where $V$ is any Borel
subset of $\cM$. Under this generality, by the decomposition Lemma~\ref{l:chop},
it is enough to consider the case $F|_V = \sum_i G_i$, where each $
G_i = F_i|_V =x+ N_i|_V$ is a (one-valued!) Lipschitz map.
Next observe that \eqref{e:no_cancellation} obviously holds if 
$\bar{c}$ is sufficiently small. Therefore, 
\[
\mass (\mathbf{T}_{F|_V})
= \sum_i \mass ((F_i)_\sharp \a{V})\, ,
\] 
and, since $\etaa\circ N = \frac{1}{Q}\sum_i N_i$,
the formula \eqref{e:taylor_area} follows
from summing the corresponding identities
\begin{align}
\mass ((F_i)_\sharp \a{V})  ={}&\cH^m (V) + \int_V \langle H, N_i\rangle
+ \frac{1}{2} \int_V |D N_i|^2\nonumber\\
& + \int_V \Big(P_2 (x,N_i) +  P_3 (x, N_i, DN_i) + R_4 (x, DN_i)\Big)\, .\label{e:taylor_sheet}
\end{align}

To simplify the notation we drop the subscript $i$ in the proof of \eqref{e:taylor_sheet}.
Using the area formula, we have that
\[
\mass (F_\sharp \a{V}) = \int_V |DF_\sharp \vec{\xi}|\, d\cH^m \, ,
\]
where $\vec{\xi} = \xi_1\wedge \ldots \wedge \xi_m$ is the simple $m$-vector
associated to an orthonormal frame on $T\cM$. 
By simple multilinear algebra $|DF_\sharp \vec{\xi}| = \sqrt{\det M}$, where $M$ is the $m\times m$ matrix given by  
\begin{align}
M_{jk} &= \langle DF\cdot \xi_j, DF \cdot \xi_k\rangle
= \langle \xi_j + DN \cdot \xi_j, \xi_k + DN \cdot \xi_k \rangle\nonumber\\ 
&= \delta_{jk} + \underbrace{\langle DN \cdot \xi_j, \xi_k\rangle + \langle DN \cdot \xi_k, \xi_j \rangle}_{a_{jk}}
+ \underbrace{\langle DN \cdot \xi_j, DN\cdot \xi_k\rangle}_{b_{jk}}\label{e:decompone}\, .
\end{align}
Set $a = (a_{jk})$, $b= (b_{jk})$ and denote by $M_2 (a+b)$ and $M_3 (a+b)$, respectively, the sum of all $2\times 2$ and that of all $3\times3$ minors of
the matrix $(a+b)$; similarly denote by $R (a+b)$ the sum of all $k\times k$ minors with $k\geq 4$.
Then,
\begin{equation}\label{e:sviluppo1}
\det M = 1+ {\rm tr}\, (a+ b) + M_2 (a+b) + M_3 (a+b) + R(a+b)\, .
\end{equation}
Observe that
the entries of $a$ are linear in $DN$ and those of $b$ are quadratic.
Thus,
\begin{align}
M_2 (a+b) &= M_2 (a) + M_2 (b) + C_2 (a,b),\label{e:sviluppo2}\\
M_3 (a+b) &= M_3 (a) + C_4 (a,b),\label{e:sviluppo3}
\end{align}
where $C_2 (a,b)$ is a linear combination of terms of the form $a_{jk} b_{lm}$ and $C_4 (a,b)$ is a polynomial in
the entries of $DN$ satisfying the inequality $|C_4 (a,b)|\leq C |DN|^4$.
Recall the Taylor expansion $\sqrt{1+\tau} = 1+\frac{\tau}{2} - \frac{\tau^2}{8} + \frac{\tau^3}{16} + g (\tau)$,
where $g$ is an analytic function with $|g(\tau)|\leq |\tau|^4$. With the aid of
\eqref{e:sviluppo1}, \eqref{e:sviluppo2} and \eqref{e:sviluppo3} we reach the following conclusion:
\begin{align}\label{e:Taylorone}
|DF_\sharp \vec{\xi}| ={} & 1 + \frac{{\rm tr}\, (a+b) + M_2 (a) +C_2 (a,b) + M_3(a)}{2} +\notag\\
&\quad-\frac{({\rm tr}\, a)^2+2 \, {\rm tr}\, a\, {\rm tr}\, b + 2\, {\rm tr}\, a\, M_2 (a)}{8} + \frac{({\rm tr}\, a)^3}{16} + R_4,
\end{align}
where $R_4$ is an analytic function of the entries of $DN$ which satisfies $|R_4 (DN)|\leq C |DN|^4$.
Observe next that $
{\rm tr}\, b = \sum_k \langle DN \cdot \xi_k, DN\cdot \xi_k\rangle = |DN|^2$.
Moreover, 
\begin{align*}
\langle DN \cdot \xi_j, \xi_k \rangle &=
\nabla_{\xi_j} (\langle N, \xi_k \rangle) - \langle N, \nabla_{\xi_j} \xi_k\rangle
= - \langle N, A (\xi_j, \xi_k)\rangle.
\end{align*}
Thus, by the symmetry
of the second fundamental form, we have 
\begin{equation*}
a_{jk} = - 2 \langle A (\xi_j, \xi_k), N\rangle \quad \mbox{and}\quad
{\rm tr}\, a = - 2 \langle H, N \rangle\, .
\end{equation*}
We then can rewrite 
\begin{align}\label{e:taylorone2}
|DF_\sharp \vec{\xi}| = {} & 1 - \langle H, N \rangle + \frac{|DN|^2}{2} + 
\underbrace{\frac{M_2 (a)}{2} - \frac{({\rm tr}\, a)^2}{8}}_{P_2} + \notag\\
&\quad + \underbrace{\frac{C_2 (a,b) + M_3(a)}{2} - 
\frac{\rm tr\, a\, ({\rm tr}\, b + M_2 (a))}{4} + \frac{({\rm tr}\, a)^3}{16}}_{P_3}
+ R_4\, .
\end{align}
Integrating \eqref{e:taylorone2} we reach \eqref{e:taylor_sheet}. It remains to show that $P_2$, $P_3$ and
$R_4$ satisfy (i), (ii) and (iii). If $A=0$, then $\cM$ is flat and the frame $\xi_1, \ldots, \xi_m$ can be
chosen constant, so that $R_4$ will not depend on $x$. 
Next, each $b_{jk}$ is a quadratic polynomial in the entries of $DN$, with coefficients which are $C^2$ functions of $x$.
Instead each $a_{jk}$ can be seen as a linear function in $DN$ with coefficients which
are $C^2$ functions of $x$, but also as a linear function $L_{jk}$ of $N$, with a $C^1$ dependence on $x$. In the latter case we have the bound $|L_{jk} (x,n)|\leq |A(x)||n|$.
Therefore the claims in (i) and (ii) follow easily.
Finally, since $R_4$ is an analytic function of the entries of $DN$ satisfying
$|R_4(DN)| \leq C \, |DN|^4$, the representation in (iii) follows from the elementary
consideration that $\frac{R_4(D)}{|D|^3}$ is a Lipschitz function vanishing at the origin.

Finally, observe that the argument above implies 
\eqref{e:taylor_aggiuntivo_2} when $g$ is the indicator function of any measurable set and the
general case follows from standard
measure theory.
The identity \eqref{e:taylor_aggiuntivo} follows easily from the same formulas for
$|DF_\sharp \vec{\xi}|$, using indeed cruder estimates.
\end{proof}

\subsection{Taylor expansion for the excess in a cylinder} The last results of this
section concerns estimates of the excess in different systems of coordinates,
in particular with respect to tilted planes and curvilinear coordinates.

\begin{proposition}[Expansion of a curvilinear excess]\label{p:eccesso_curvo}
There exist a dimensional constant $C>0$ such that, if
$\cM$, $F$ and $N$ are as in Assumption \ref{i:tripla_malefica} with $\bar{c}$ small enough, then
\begin{align}
\left\vert \int |\vec{\bT}_F (x) - \vec{\cM} (\p (x))|^2 \, d\|\bT_F\| (x)
- \int_\cM |DN|^2 \right\vert \leq  C \int_{\cM} (|A|^2 |N|^2 + |DN|^4)
\label{e:ecurvo_sopra}\,, 
\end{align}
where $\vec{\bT}_F$ and $\vec{\cM}$ are the unit $m$-vectors orienting $\bT_F$ and
$T\cM$, respectively.
\end{proposition}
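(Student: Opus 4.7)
The plan is to reduce the excess integrand to an inner product via the identity $|\vec a - \vec b|^2 = 2 - 2\langle \vec a, \vec b\rangle$ valid for unit vectors, yielding
\[
\int |\vec\bT_F - \vec\cM\circ\p|^2 \, d\|\bT_F\| = 2\mass(\bT_F) - 2\int \langle \vec\bT_F(p), \vec\cM(\p(p))\rangle\, d\|\bT_F\|(p)\, .
\]
Then I would estimate the first term directly from the mass expansion \eqref{e:taylor_area} of Theorem \ref{t:taylor_area} (with Young's inequality to absorb the cubic term $P_3$ of order $|A||N||DN|^2$ into $|A|^2|N|^2 + |DN|^4$), and compute the second term explicitly, showing it too admits an expansion whose leading and linear-in-$A$ terms exactly cancel those of $2\mass(\bT_F)$, leaving precisely $\int_\cM |DN|^2$ plus controlled errors.

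For the second term, I would apply the decomposition Lemma \ref{l:chop} to reduce to a single-valued sheet $F_i(x) = x + N_i(x)$, then use the area formula. The crucial geometric observation is that $N_i(x) \perp T_x\cM$ forces $\p(F_i(x)) = x$, so that after cancelling the Jacobian in the area formula against the unit normalization of $\vec\bT_{F_i}$, one obtains
\[
\int \langle\vec\bT_F, \vec\cM\circ\p\rangle\, d\|\bT_F\| = \int_\cM \sum_i \langle DF_i(x)_\sharp \vec\cM(x),\, \vec\cM(x)\rangle\, d\cH^m(x)\, .
\]
This is the clean step where no Taylor error appears yet.

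Next I would expand this inner product as a determinant: picking an orthonormal frame $\xi_1,\dots,\xi_m$ for $T\cM$ with $\vec\cM = \xi_1\wedge\dots\wedge\xi_m$, one has $\langle DF_i\cdot\xi_j,\xi_k\rangle = \delta_{jk} + \langle DN_i\cdot\xi_j,\xi_k\rangle$, and the identity $\langle DN_i\cdot\xi_j,\xi_k\rangle = -\langle A(\xi_j,\xi_k), N_i\rangle$ (already established in the proof of Theorem \ref{t:taylor_area}) gives
\[
\langle DF_i(x)_\sharp \vec\cM(x), \vec\cM(x)\rangle = \det\bigl(\delta_{jk} - \langle A(\xi_j,\xi_k), N_i\rangle\bigr) = 1 - \langle H, N_i\rangle + O(|A|^2 |N_i|^2)\, .
\]
The noteworthy feature is that the expansion contains no $DN$ terms at all, because the matrix entries are purely linear in $N_i$. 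Summing over $i$ and integrating yields
\[
\int \langle\vec\bT_F, \vec\cM\circ\p\rangle\, d\|\bT_F\| = Q\,\cH^m(\cM) - Q\int_\cM \langle H, \etaa\circ N\rangle + O\!\left(\int_\cM |A|^2 |N|^2\right)\, .
\]

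Finally I would subtract twice this identity from $2\mass(\bT_F)$ as given by \eqref{e:taylor_area}: the leading $2Q\cH^m(\cM)$ terms cancel, so do the first-order mean-curvature terms $2Q\int\langle H, \etaa\circ N\rangle$, and what remains is exactly $\int |DN|^2$ plus a remainder controlled by $\int (|A|^2|N|^2 + |DN|^4)$. The main obstacle is not any single step but the careful bookkeeping that the expansion of $\langle DF_{i\sharp}\vec\cM, \vec\cM\rangle$ indeed lacks any $DN$-only contribution of quadratic order, which is what makes the cancellation clean and excludes any $|DN|^2$ error on the right-hand side; this is where the hypothesis $N_i \perp T_x\cM$ (giving $\p\circ F_i = \mathrm{id}$ and also that $\langle DN_i\cdot\xi, \xi'\rangle$ is antisymmetric up to a second-fundamental-form term) is essential.
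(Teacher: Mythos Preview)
Your argument is correct and coincides with the paper's: both split the excess as $2\int_\cM \sum_i(|\zeta_i| - \langle\zeta_i,\vec\cM\rangle)$ with $\zeta_i = (DF_i)_\sharp\vec\cM$, expand $\sum_i|\zeta_i|$ via the mass computation of Theorem~\ref{t:taylor_area} and $\langle\zeta_i,\vec\cM\rangle = \det(\delta_{jk} - \langle A(\xi_j,\xi_k),N_i\rangle)$, and observe the cancellation of the zeroth- and first-order terms. One minor slip in your closing parenthetical: $\langle DN_i\cdot\xi_j,\xi_k\rangle = -\langle A(\xi_j,\xi_k),N_i\rangle$ is \emph{symmetric} in $j,k$ (since $A$ is), not antisymmetric --- but this plays no role in the argument.
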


\begin{proof}
Let $p\in \cM$ and define 
$\vec \cM(p)= \xi_1\wedge \ldots \wedge \xi_m$ for some orthonormal frame $\xi_1, \ldots, \xi_m$ for $T\cM$ and 
\[
\vec \bT_F(F_i (p)) = \textstyle{\frac{\vec\zeta_i}{|\vec\zeta_i|}} 
\qquad\mbox{with}\qquad \vec\zeta_i = (\xi_1+ DN_i|_p \cdot \xi_1)\wedge \ldots \wedge 
(\xi_m+ DN_i|_p \cdot \xi_m)\, .
\]
Our assumptions imply $\p (F_i (p))=p$. 
Using the $Q$-valued area formula and obvious computations we get
\[
\int |\vec{\bT}_F - \vec{\cM} \circ \p|^2 \, d\|\bT_F\| (x)
= \int_{\cM} \sum_i \left|\textstyle{\frac{\zeta_i}{|\zeta_i|}} - \vec\cM\right|^2|\zeta_i|
= \int_{\cM} 2 \Big(\sum_i |\zeta_i| - \sum_i \langle \zeta_i, \vec{\cM}\rangle\Big)\, .
\]
As already computed in the proof of Theorem \ref{t:taylor_area},
\[
\sum_i |\zeta_i| = Q - Q \langle H, \etaa\circ N \rangle + \frac{|DN|^2}{2} + O (|A|^2|N|^2 + |DN|^4)\, . 
\]
If we next define
$B^i_{jk} := \langle \xi_j, \xi_k + DN_i \cdot \xi_k\rangle = 
\delta_{jk} - \langle N_i, A (\xi_j, \xi_k)\rangle$, we then get
\[
\sum_i \langle \zeta_i, \vec\cM\rangle = \sum_i \det B^i = Q - Q \langle H, \etaa \circ N\rangle
+ O (|A|^2|N|^2)\, .
\]
Hence the claimed formula follows easily.
\end{proof}

Next we compute the excess of a Lipschitz graph with respect to a tilted plane.

\begin{theorem}[Expansion of a cylindrical excess]\label{t:taylor_tilt}
There exist dimensional constants $C, c>0$ with the following property.
Let $f: \R^m \to \Iqs$ be a Lipschitz map with ${\rm Lip}\, (f)\leq c$.
For any $0<s$, set $A:= \mint_{B_s} D (\etaa \circ f)$ and denote by $\vec\tau$
the oriented unitary $m$-dimensional simple vector to the graph of the linear map
$y\mapsto A\cdot y$. Then, we have
\begin{equation}\label{e:taylor_tilt}
\left| \int_{\bC_s} \left| \vec{\bG}_f - \vec\tau\right|^2\, d \|\bG_f\| - \int_{B_s} \cG (D f, Q \a{A})^2 \right| \leq C \int_{B_s} |Df|^4\, .
\end{equation}
\end{theorem}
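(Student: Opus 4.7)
The plan is to use the area formula for $Q$-graphs (Corollary \ref{c:massa_grafico}) to reduce both sides of \eqref{e:taylor_tilt} to integrals over $B_s$, and then to derive the result from a pointwise algebraic Taylor expansion of the Grassmannian distance, very much in the spirit of the computation carried out in the proof of Proposition \ref{p:eccesso_curvo}.

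First, invoke Corollary \ref{c:massa_grafico} together with the Decomposition Lemma \ref{l:chop}, so as to work a.e.\ with classical single-valued selections $f_i$ of $f$. Setting
\[
\vec\zeta_i := (e_1 + \partial_1 f_i)\wedge\cdots\wedge(e_m + \partial_m f_i), \qquad \vec\zeta_A := (e_1 + A\cdot e_1)\wedge\cdots\wedge (e_m + A\cdot e_m),
\]
the unit $m$-vector orienting $\bG_f$ at $(x, f_i(x))$ is $\vec\zeta_i/|\vec\zeta_i|$ and $\vec\tau = \vec\zeta_A/|\vec\zeta_A|$. Since both are unit simple $m$-vectors, the identity $|u/|u| - \vec\tau|^2|u| = 2|u| - 2\langle u, \vec\tau\rangle$ together with the area formula turns the left-hand side of \eqref{e:taylor_tilt} into $\int_{B_s}\sum_i \big(2|\vec\zeta_i| - 2\langle \vec\zeta_i, \vec\tau\rangle\big)\,dx$.

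Next, I Taylor-expand the integrand. By Cauchy--Binet one has $|\vec\zeta_i| = 1 + \tfrac12|Df_i|^2 + O(|Df_i|^4)$ and $|\vec\zeta_A|^{-1} = 1 - \tfrac12|A|^2 + O(|A|^4)$, while the orthogonality of $\R^m$ and $\R^n$ inside $\R^{m+n}$ gives the clean determinant identity $\langle \vec\zeta_i, \vec\zeta_A\rangle = \det(I + Df_i^T A) = 1 + \langle Df_i, A\rangle + O(|Df_i|^2|A|^2)$. Assembling these expansions yields the pointwise bound
\[
2|\vec\zeta_i| - 2\langle \vec\zeta_i, \vec\tau\rangle = |Df_i - A|^2 + O\big((|Df_i|+|A|)^4\big).
\]
Summing over $i$, the leading term is exactly $\cG(Df, Q\a{A})^2$, because the matching of $\sum_i \a{Df_i}$ against the constant $Q$-point $Q\a{A}$ is trivial, and this produces the equality part of \eqref{e:taylor_tilt}.

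It remains to bound $\int_{B_s}\sum_i (|Df_i|+|A|)^4\,dx$ by $C \int_{B_s} |Df|^4\,dx$. Pointwise, $\sum_i |Df_i|^4 \leq \big(\sum_i |Df_i|^2\big)^2 = |Df|^4$. Since $A$ is constant in $x$ with $A = \mint_{B_s} \tfrac1Q \sum_i Df_i$, the bound $|B_s||A|^4 \leq C\int_{B_s}|Df|^4$ follows from H\"older's inequality combined with the power-means estimate $\big(\sum_i |Df_i|\big)^4 \leq Q^3 \sum_i |Df_i|^4$. This averaging step for the tilt $A$ is really the only mildly delicate point; everything else reduces to the routine algebraic Taylor expansion $|\vec v(L) - \vec v(A)|^2 = |L-A|^2 + O((|L|+|A|)^4)$ for the Grassmannian distance between unit $m$-vectors attached to graphs of linear maps, which is exactly the type of computation already performed in the proof of Proposition \ref{p:eccesso_curvo}.
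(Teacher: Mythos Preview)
Your proposal is correct and follows essentially the same approach as the paper's proof: reduce to an integral over $B_s$ via the area formula and Lemma~\ref{l:chop}, expand $|\vec\zeta_i|$, $|\vec\zeta_A|^{-1}$ and $\langle\vec\zeta_i,\vec\zeta_A\rangle=\det(I+Df_i^T A)$ to second order, recognize $\sum_i|Df_i-A|^2=\cG(Df,Q\a{A})^2$, and absorb the quartic remainders (including the $|A|^4$ contribution) into $C\int_{B_s}|Df|^4$ via H\"older. The only cosmetic difference is that the paper phrases the reduction as $E=2\mass(\bG_f)-2\int\langle\vec\bG_f,\vec\tau\rangle\,d\|\bG_f\|$ before applying the area formula, whereas you apply the area formula first and then the identity $|u/|u|-\vec\tau|^2|u|=2|u|-2\langle u,\vec\tau\rangle$; these are the same computation.
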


\begin{proof}[Proof of Theorem \ref{t:taylor_tilt}]
Arguing as in the previous proofs, thanks to Lemma~\ref{l:chop},
we can write $f= \sum_i \a{f_i}$ and process local computations (when needed) as if each $f_i$ were Lipschitz. 
Moreover, we have that
\[
\vec\tau = \textstyle{\frac{\vec\xi}{|\xi|}} \quad \text{with }\;
\vec\xi = (e_1+ A \,e_1)\wedge \ldots \wedge (e_m+A\,e_m).
\]
Here and for the rest of this proof, we identify $\R^m$ and $\R^n$ with
the subspaces $\R^m \times \{0\}$ and $\{0\} \times \R^n$ of $\R^{m+n}$, respectively:
this justifies the notation $e_j+A\,e_j$ for $e_j \in \R^m$ and $A\, e_j \in \R^n$.
Next, we recall that
\[ 
|\xi| = \sqrt{\langle \xi, \xi\rangle} = \sqrt{\det (\delta_{ij} + \langle A \, e_i , A\, e_j\rangle)} = 1 + \textstyle{\frac{1}{2}} |A|^2
 +O (|A|^4).
\]
By Corollary~\ref{c:massa_grafico} we also have
\begin{align}
E &:= \int_{\bC_s} \left| \vec{\bG}_f - \vec\tau\right|^2\, d \|\bG_f\| = 2\,\mass (\bG_f) - 2 \int \langle \vec\bG_f, \vec\tau \,\rangle\, d\|\bG_f\|\nonumber\\
&= 2 \, Q \, |B_s| + \int_{B_s} (|Df|^2 + O (|Df|^4)) - 2\int \sum_i \langle (e_1+Df_i\, e_1)\wedge \ldots \wedge (e_m+Df_i \, e_m), \vec\tau \,\rangle\nonumber.
\end{align}
On the other hand $\langle A\,e_j , e_k\rangle = 0 = \langle Df_i\, e_j , e_k\rangle$. Therefore,
\begin{align*}
\langle (e_1+Df_i \, e_1)\wedge& \ldots \wedge (e_m+Df_i\,e_m), \vec \tau \, \rangle
= |\xi|^{-1} \det (\delta_{jk} +
\langle Df_i \, e_j , A \,e_k\rangle)\nonumber\\
&=
\left(1 + \frac{|A|^2}{2} + O (|A|^4)\right)^{-1}
 \left(1 + Df_i : A + O (|Df|^2|A|^2)\right) \, .
\end{align*}
Recalling that $|A|\leq C s^{-m} \int |Df| \leq C \left(s^{-m} \int |Df|^4\right)^{\frac{1}{4}}$, we then conclude
\begin{align*}
E & = \int_{B_s} |Df|^2 +  Q\,|B_s|\,|A|^2 - 2 \int_{B_s} \sum_i Df_i :A  + O \left(\int_{B_s} |Df|^4\right)\\ 
&=\int_{B_s} \sum_i |Df_i - A|^2 + O\left(\int_{B_s} |Df|^4\right) = \int_{B_s} \cG (Df, Q\a{A})^2 + O \left(\int_{B_s} |Df|^4\right)\, .\qedhere
\end{align*}
\end{proof}

\section{First variations}\label{s:taylor_dT}
In this section we compute the first variations of the currents induced by multiple
valued maps. These formulae are ultimately the link between the stationarity
of area minimizing currents and the partial differential equations satisfied by suitable approximations.
We use here the following standard notation: 
given a current $T$ in $\R^N$ and a vector field
$X \in C^1 (\R^N,\R^N)$, we denote the first variation of $T$ along $X$ by
$\delta T (X) := \left.\frac{d}{dt}\right\vert_{t=0} \mass({\Phi_t}_\sharp T)$,
where $\Phi: ]-\eta, \eta[\times U \to \R^N$ is any $C^1$ isotopy of a neighborhood
$U$ of $\supp (T)$ with $\Phi (0, x)=x$ for any $x\in U$ and
$\left.\frac{d}{d\eps}\right\vert_{\eps=0} \Phi_\eps= X$
(in what follows we will often use
$\Phi_\eps$ for the map $x\mapsto \Phi (\eps, x)$). 
It would be more appropriate
to use the notation $\delta T (\Phi)$ (see, for instance, \cite[Section 5.1.7]{Fed}), 
but since the currents considered in this
paper are rectifiable, it is well known that the first variation depends only on $X$
and is given by the formula
\begin{equation}\label{e:first_var}
\delta T (X) = \int {\rm div}_{\vec{T}}\, X\, d\|T\|,
\end{equation}
where ${\rm div}_{\vec{T}}\, X = \sum_i \langle D_{e_i} X, e_i\rangle$ for any orthonormal frame
$e_1, \ldots, e_m$ with $e_1\wedge \ldots \wedge e_m = \vec{T}$ (see \cite[5.1.8]{Fed} and cf.~\cite[Section 2.9]{Sim}).
We begin with the expansion for the first variation of graphs. In what follows, $A:B$ will denote the usual Hilbert Schmidt
scalar product of two $k\times j$ matrices.

\begin{theorem}[Expansion of $\delta \bG_f (X)$]\label{t:grafici}
Let $\Omega\subset \R^m$ be a bounded open set and $f:\Omega \to \Iqs$ a map 
with $\Lip (f)\leq \bar{c}$.
Consider a function $\zeta\in C^1(\Omega \times \R^m, \R^n)$ and the corresponding vector field
$\chi\in C^1 (\Omega\times \R^n, \R^{m+n})$ given by $\chi (x,y) = (0, \zeta (x,y))$.
Then,
\begin{equation}\label{e:variazione_media}
\left|\delta \mathbf{G}_f (\chi) - \int_\Omega\sum_i  \big(D_x \zeta (x, f_i) + D_y \zeta (x, f_i) \cdot Df_i\big) : Df_i \right| 
\leq C \int _\Omega |D\zeta| |Df|^3\, .
\end{equation}
\end{theorem}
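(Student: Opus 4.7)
The plan is to reduce, via the Decomposition Lemma \ref{l:chop}, to the case of a single classical Lipschitz sheet, and then to Taylor-expand the standard divergence formula \eqref{e:first_var} for the first variation. Apply Lemma \ref{l:chop} to obtain a bounded measurable partition $\{V_k\}$ of $\Omega$ and classical Lipschitz functions $f^j_k : V_k\to\R^n$ with $f|_{V_k} = \sum_{j=1}^Q\a{f^j_k}$ and $\Lip(f^j_k)\leq\bar c$. By Definition \ref{d:push_forward}, $\bG_f = \sum_{k,j}(F^j_k)_\sharp\a{V_k}$ with $F^j_k(x):=(x,f^j_k(x))$, and summability of the masses (Corollary \ref{c:massa_grafico}) lets me commute the first variation with the sum. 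Via \eqref{e:repr_formula}, the right-hand side of \eqref{e:variazione_media} decomposes in the same way, so it suffices to establish
\[
\left|\delta\bG_g(\chi) - \int_V \bigl(D_x\zeta(x,g) + D_y\zeta(x,g)\cdot Dg\bigr):Dg\,dx\right| \leq C\int_V |D\zeta|\,|Dg|^3
\]
for each classical Lipschitz $g:V\to\R^n$ with $\Lip(g)\leq\bar c$ and $V\subset\Omega$ measurable.

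For such a $g$, \eqref{e:first_var} gives $\delta\bG_g(\chi) = \int \mathrm{div}_{\vec{\bG_g}}\chi\,d\|\bG_g\|$. Parametrize $\bG_g$ by $\Psi(x):=(x,g(x))$; the coordinate tangent vectors $\tau_j:=e_j+Dg\cdot e_j$ induce the metric $M_{jk}:=\delta_{jk}+(Dg^T Dg)_{jk}$ with inverse $M^{jk}$, and $d\|\bG_g\| = \sqrt{\det M}\,dx$ by Corollary \ref{c:massa_grafico}. Since the tangential divergence along the graph is $M^{jk}\langle D_{\tau_j}\chi,\tau_k\rangle$, one obtains
\[
\delta\bG_g(\chi) = \int_V M^{jk}\sqrt{\det M}\,\langle D_{\tau_j}\chi,\tau_k\rangle\,dx.
\]
Since $\chi(x,y)=(0,\zeta(x,y))$, at $\Psi(x)$ the derivative $D_{\tau_j}\chi = (0,\, D_x\zeta\cdot e_j + D_y\zeta\cdot Dg\cdot e_j)$ is purely vertical, so only the vertical part of $\tau_k$ contributes, yielding
\[
\langle D_{\tau_j}\chi,\tau_k\rangle = (D_x\zeta\cdot e_j + D_y\zeta\cdot Dg\cdot e_j)\cdot (Dg\cdot e_k).
\]

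For $|Dg|\leq\bar c$ small, expanding $M^{-1}=I+O(|Dg|^2)$ and $\sqrt{\det M}=1+O(|Dg|^2)$ produces
\[
M^{jk}\sqrt{\det M} = \delta_{jk} + E_{jk},\qquad |E_{jk}|\leq C|Dg|^2.
\]
The $\delta_{jk}$-part of the integrand is
\[
\sum_j (D_x\zeta\cdot e_j + D_y\zeta\cdot Dg\cdot e_j)\cdot (Dg\cdot e_j) = (D_x\zeta + D_y\zeta\cdot Dg):Dg,
\]
which is exactly the claimed principal term. The $E_{jk}$-part is pointwise bounded by
\[
|E_{jk}\langle D_{\tau_j}\chi,\tau_k\rangle| \leq C|Dg|^2 \cdot (|D_x\zeta|+|D_y\zeta||Dg|)\,|Dg| \leq C|D\zeta|\,|Dg|^3,
\]
where $|Dg|\leq\bar c$ is used to absorb the subquadratic piece. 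Integrating over $V$ and summing in $k,j$ yields the claim.

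The argument is essentially a careful algebraic expansion, so there is no substantial obstacle. The only subtle point is ensuring the full cubic gain $|D\zeta|\,|Dg|^3$ in the remainder, which requires that $M^{jk}\sqrt{\det M}-\delta_{jk}$ be quadratic (and not merely linear) in $Dg$; this is automatic since $M_{jk}-\delta_{jk}$ is itself quadratic in $Dg$, so no linear-in-$Dg$ terms can appear in the expansion. A minor bookkeeping item is the commutation of $\delta$ with the countable sum of Lemma \ref{l:chop}, justified by the uniform mass bounds from Corollary \ref{c:massa_grafico}.
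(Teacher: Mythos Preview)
Your argument is correct and takes a genuinely different route from the paper. The paper observes that the flow $\Phi_\eps(x,y)=(x,y+\eps\,\zeta(x,y))$ sends $\bG_f$ to another $Q$-graph $\bG_{f_\eps}$ with $f_\eps=\sum_i\a{f_i+\eps\,\zeta(x,f_i)}$, and then differentiates the mass expansion of Corollary~\ref{c:taylor_area} at $\eps=0$; the principal term comes from $\tfrac12\int|Df_\eps|^2$ and the error from differentiating $\bar R_4(Df_\eps)$. You instead bypass Corollary~\ref{c:taylor_area} entirely, writing $\delta\bG_f(\chi)$ via the divergence formula \eqref{e:first_var} and expanding the inverse metric $M^{jk}\sqrt{\det M}$ directly. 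Your approach is more self-contained (it does not feed through the mass Taylor expansion) and makes transparent why the remainder is genuinely cubic: because $M-I=Dg^TDg$ is purely quadratic, no linear-in-$Dg$ term can appear in $M^{jk}\sqrt{\det M}-\delta_{jk}$. The paper's approach, on the other hand, reuses a single Taylor expansion across Theorems~\ref{t:taylor_area}--\ref{t:inner}, which is economical when proving several such formulas. One small remark: rather than ``commuting $\delta$ with the countable sum'', it is cleaner to apply \eqref{e:first_var} once to $\bG_f$ and then decompose the resulting integral against $d\|\bG_f\|$ using Lemma~\ref{l:chop} and Lemma~\ref{l:area} (graphs satisfy \eqref{e:no_cancellation}, so the mass measure splits additively); this avoids speaking of first variations of the individual pieces, which are merely rectifiable.
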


The next two theorems deal with general
$\bT_F$ as in Assumption \ref{i:tripla_malefica}. However we restrict our attention to ``outer and inner variations'',
where we borrow our terminology from the elasticity theory and the literature on harmonic maps. Outer variations result
from deformations of the normal bundle of $\cM$ which are the identity on $\cM$ and 
map each fiber into itself, whereas inner variations result from composing
the map $F$ with isotopies of $\cM$.

\begin{theorem}[Expansion of outer variations]\label{t:outer}
Let $\cM$, $\mathbf{U}$, $\p$ and $F$ be as in Assumption \ref{i:tripla_malefica}
with $\bar{c}$ sufficiently small. If $\varphi\in C^1_c (\cM)$ and $X(p) := 
\varphi (\mathbf{p} (p)) (p-\mathbf{p}(p))$, then
\begin{align}
\delta \mathbf{T}_F (X) = \int_\cM \Big(\varphi \, |DN|^2 + 
\sum_i (N_i \otimes D \varphi) : DN_i\Big) - \underbrace{Q \int_\cM \varphi \langle H, \etaa\circ N\rangle}_{{\rm Err}_1} + \sum_{i=2}^3{\rm Err}_i
\label{e:outer} 
\end{align}
where
\begin{gather}
|{\rm Err}_2| \leq C \int_\cM |\varphi| |A|^2|N|^2\label{e:outer_resto_2}\\
|{\rm Err}_3|\leq C \int_\cM \Big(|\varphi| \big(|DN|^2 |N| |A| + |DN|^4\big) +
|D\varphi| \big(|DN|^3 |N| + |DN| |N|^2 |A|\big)\Big)\label{e:outer_resto_3}\, .
\end{gather}
\end{theorem}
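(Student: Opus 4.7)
The approach is to parametrize the flow of $X$ as a family of push-forwards $\bT_{F_\eps}$ with an explicit modification of $N$, and then differentiate the area expansion of Theorem \ref{t:taylor_area} term by term at $\eps = 0$. Define
\[
\Phi_\eps(p) := \p(p) + (1+\eps\varphi(\p(p)))(p-\p(p)) \qquad \text{on } \bU,
\]
extended by the identity outside $\bU$; since $\varphi \in C^1_c(\cM)$ and $\bU$ has fixed positive thickness, for $|\eps|$ small this is a global bilipschitz diffeomorphism of $\R^{m+n}$ whose velocity at $\eps=0$ equals $X$. Because $\p(F_i(x)) = x$ and $F_i(x)-x = N_i(x) \perp T_x\cM$, the composition satisfies $\Phi_\eps \circ F(x) = \sum_i\a{x + (1+\eps\varphi(x))N_i(x)}$. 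Lemma \ref{l:biLipschitz_inv} then yields $(\Phi_\eps)_\sharp \bT_F = \bT_{F_\eps}$, where $F_\eps$ is the map associated to $N_\eps := (1+\eps\varphi)N$, so that $\delta \bT_F(X) = \frac{d}{d\eps}\big|_{\eps=0}\mass(\bT_{F_\eps})$.

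For $|\eps|$ small the pair $(\cM, F_\eps)$ still satisfies Assumption \ref{i:tripla_malefica}, so one may apply \eqref{e:taylor_area} and differentiate each summand. The constant $Q\cH^m(\cM)$ contributes nothing. Since $\etaa\circ N_\eps = (1+\eps\varphi)\etaa\circ N$, the term $-Q\int\langle H,\etaa\circ N_\eps\rangle$ contributes $-Q\int\varphi\langle H,\etaa\circ N\rangle$, which is exactly $\mathrm{Err}_1$. Using $DN_\eps^i = (1+\eps\varphi)DN_i + \eps\, N_i\otimes D\varphi$, a direct expansion gives
\[
\frac{d}{d\eps}\bigg|_{\eps=0}\frac{|DN_\eps^i|^2}{2} = \varphi\, |DN_i|^2 + (N_i\otimes D\varphi):DN_i,
\]
and summation over $i$ produces the main bracket of \eqref{e:outer}.

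It remains to bound the contributions of $P_2,P_3,R_4$ by $\mathrm{Err}_2$ and $\mathrm{Err}_3$. Because $P_2$ is quadratic in $n$, its $\eps$-derivative at $0$ equals $2\varphi P_2(x,N_i)$, and the bound (i) in Theorem \ref{t:taylor_area} yields \eqref{e:outer_resto_2}. For $P_3=\sum_\alpha L_\alpha(x,n)Q_\alpha(x,D)$, the bilinear/quadratic structure in (ii), combined with $|L_\alpha(x,n)||Q_\alpha(x,D)|\le C|A||n||D|^2$, produces an $\eps$-derivative controlled pointwise by $C|A||N||DN|(|\varphi||DN| + |N||D\varphi|)$. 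For $R_4(x,D) = |D|^3 L(x,D)$ with $L$ Lipschitz and $L(x,0)=0$, the factorization in (iii) gives $|\partial_D R_4(x,D)\cdot E|\le C|D|^3|E|$; taking $E = \varphi DN_i + N_i\otimes D\varphi$ yields a contribution bounded by $C|\varphi||DN|^4 + C|D\varphi||DN|^3|N|$. All these terms fit into the two summands of \eqref{e:outer_resto_3}.

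The only delicate point is gradient control on $P_3$ and $R_4$, since the hypotheses of Theorem \ref{t:taylor_area} only state pointwise bounds on the functions themselves. For $P_3$ this is overcome by polarizing its explicit bilinear-in-$n$, quadratic-in-$D$ structure to obtain separate estimates on the partial derivatives; for $R_4$ the explicit factorization $|D|^3L(x,D)$ with Lipschitz $L$ vanishing at $D=0$ makes the gradient estimate immediate. Once these bounds are in hand, the differentiation under the integral is justified by dominated convergence for small $|\eps|$, and combining all contributions yields \eqref{e:outer}--\eqref{e:outer_resto_3}.
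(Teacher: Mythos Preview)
Your argument is correct and follows essentially the same route as the paper: identify $\Phi_\eps(p)=p+\eps X(p)$, recognize that $(\Phi_\eps)_\sharp\bT_F=\bT_{F_\eps}$ with $N_\eps=(1+\eps\varphi)N$, and differentiate the expansion \eqref{e:taylor_area} term by term, using the structural properties (i)--(iii) to control the derivatives of $P_2,P_3,R_4$. One small correction: the identity $(\Phi_\eps)_\sharp\bT_F=\bT_{\Phi_\eps\circ F}$ is not Lemma~\ref{l:biLipschitz_inv} (that lemma concerns precomposition in the \emph{domain}); rather it is the compatibility of the $Q$-valued push-forward with postcomposition by a classical Lipschitz map, which reduces immediately to the single-valued identity $\Phi_\sharp f_\sharp\a{M}=(\Phi\circ f)_\sharp\a{M}$ via the Decomposition Lemma~\ref{l:chop}.
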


Let $Y$ be a $C^1$ vector field on $T\cM$ with compact
support and define $X$ on $\bU$ setting $X (p) = Y (\p (p))$. Let $\{\Psi_\eps\}_{\eps \in ]-\eta, \eta[}$ be
any isotopy with $\Psi_0 = {\rm id}$ and $\left.\frac{d}{d\eps}\right\vert_{\eps=0}
\Psi_\eps = Y$ and define the following
isotopy of $\mathbf{U}$: $
\Phi_\eps (p) = \Psi_\eps (\mathbf{p} (p)) + (p-\mathbf{p} (p))$.
Clearly $X = \left.\frac{d}{d\eps}\right|_{\eps =0} \Phi_\eps$. 

\begin{theorem}[Expansion of inner variations]\label{t:inner}
Let $\cM$, $\mathbf{U}$ and $F$ be as in Assumption \ref{i:tripla_malefica} with $\bar{c}$ sufficiently small.
If $X$ is as above, then
\begin{align}
\delta \mathbf{T}_F (X) = \int_\cM \Big( \frac{|DN|^2}{2} {\rm div}_{\cM}\, Y -
\sum_i  D N_i : ( DN_i\cdot D_{\cM} Y)\Big)+ \sum_{i=1}^3{\rm Err}_i,\label{e:inner} 
\end{align}
where
\begin{gather}
{\rm Err}_1 = - Q \int_{ \cM}\big( \langle H, \etaa \circ N\rangle\, {\rm div}_{\cM} Y + \langle D_Y H, \etaa\circ N\rangle\big)\, ,\label{e:inner_resto_1}\allowdisplaybreaks\\
|{\rm Err}_2| \leq C \int_\cM |A|^2 \left(|DY| |N|^2  +|Y| |N|\, |DN|\right), \label{e:inner_resto_2}\allowdisplaybreaks\\
|{\rm Err}_3|\leq C \int_\cM \Big( |Y| |A| |DN|^2 \big(|N| + |DN|\big) + |DY| \big(|A|\,|N|^2 |DN| + |DN|^4\big)\Big)\label{e:inner_resto_3}\, .
\end{gather}
\end{theorem}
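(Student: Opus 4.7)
The plan is to compute $\delta\bT_F(X)$ by differentiating $\mass((\Phi_\eps)_\sharp \bT_F)$ in $\eps$ at $\eps=0$, using a Taylor expansion of the mass that mirrors the one in the proof of Theorem~\ref{t:taylor_area}. Because $\Phi_\eps$ preserves each fiber of $\p$ and restricts to $\Psi_\eps$ on $\cM$, the composition $\Phi_\eps\circ F$ is the multi-valued map $F_\eps(x):=\sum_i\a{\Psi_\eps(x)+N_i(x)}$, and $(\Phi_\eps)_\sharp \bT_F=\bT_{F_\eps}$ by the same argument used in the proof of Theorem~\ref{t:commute} (the classical identity $\Phi_\sharp F_\sharp\a{\cM}=(\Phi\circ F)_\sharp\a{\cM}$ combined with the Decomposition Lemma~\ref{l:chop}). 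For $\bar c$ and $|\eps|$ small enough the no-cancellation condition \eqref{e:no_cancellation} holds, so Lemma~\ref{l:area} yields $\mass(\bT_{F_\eps})=\sum_i\int_\cM \bJ F_{\eps,i}\,d\cH^m$ with $F_{\eps,i}(x)=\Psi_\eps(x)+N_i(x)$.

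Next, fix an orthonormal frame $\xi_1,\ldots,\xi_m$ of $T\cM$ and observe $DF_{\eps,i}\cdot\xi_j = D\Psi_\eps\cdot\xi_j + DN_i\cdot\xi_j$. The Gram matrix then decomposes as $g^\eps_{jk}=h^\eps_{jk}+c^\eps_{jk}+b_{jk}$, where $h^\eps_{jk}=\langle D\Psi_\eps\,\xi_j, D\Psi_\eps\,\xi_k\rangle$ is the metric pulled back by $\Psi_\eps$ (reducing to $\delta_{jk}$ at $\eps=0$), $c^\eps$ is the symmetric mixed term linear in $DN_i$ (reducing at $\eps=0$ to the $a$ of \eqref{e:decompone}), and $b_{jk}=\langle DN_i\,\xi_j, DN_i\,\xi_k\rangle$ as in \eqref{e:decompone}. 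Writing $\det g^\eps=\det h^\eps\cdot\det(I+(h^\eps)^{-1}(c^\eps+b))$ and expanding the square root exactly as in \eqref{e:Taylorone}--\eqref{e:taylorone2}, I would then differentiate in $\eps$ at $\eps=0$, using the elementary identities $\frac{d}{d\eps}\big|_0 \sqrt{\det h^\eps}=\mathrm{div}_\cM Y$, $\dot h^0_{jk}=\langle\nabla_{\xi_j}Y,\xi_k\rangle+\langle\nabla_{\xi_k}Y,\xi_j\rangle$ and $\dot c^0_{jk}=\langle D_{\xi_j}Y, DN_i\,\xi_k\rangle+\langle D_{\xi_k}Y, DN_i\,\xi_j\rangle$.

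The resulting contributions are then identified as follows. The base summand $Q\sqrt{\det h^\eps}$ produces $Q\int_\cM\mathrm{div}_\cM Y$, which vanishes by the divergence theorem, since $Y$ is tangent to $\cM$ with compact support. The mean-curvature correction $-\langle H, N_i\rangle\sqrt{\det h^\eps}$ yields, after summing in $i$, the two pieces of ${\rm Err}_1$: the piece $-Q\int_\cM\langle H,\etaa\circ N\rangle\,\mathrm{div}_\cM Y$ comes from the derivative of the volume factor, while the piece $-Q\int_\cM\langle D_Y H,\etaa\circ N\rangle$ arises from the cross term between $\dot c^\eps|_0$ and $a$, after invoking the Codazzi equation to rewrite the contraction $\sum_j A(\nabla_{\xi_j}Y,\xi_j)$ in terms of $\nabla_Y H$. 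The quadratic summand $\mathrm{tr}\,b/2=|DN_i|^2/2$ supplies the two pieces of the displayed main term of \eqref{e:inner}: the piece $\frac{1}{2}\int_\cM |DN|^2\,\mathrm{div}_\cM Y$ from the volume factor, and the piece $-\sum_i\int_\cM DN_i:(DN_i\cdot D_\cM Y)$ from $\frac{d}{d\eps}\big|_0 \mathrm{tr}((h^\eps)^{-1} b)/2$. Finally, the derivatives of $P_2, P_3, R_4$ from \eqref{e:taylorone2}, together with the sub-leading cross terms generated by $\dot h^\eps$ and $\dot c^\eps$, are absorbed into ${\rm Err}_2$ and ${\rm Err}_3$ according to their scaling in $|A|$, $|N|$, $|DN|$, $|Y|$, $|DY|$.

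The main obstacle in this plan is the clean identification of the $\langle D_Y H,\etaa\circ N\rangle$ piece of ${\rm Err}_1$, which requires invoking the Codazzi equation for the ambient second fundamental form and a careful algebraic rearrangement of several traces. The remaining bookkeeping needed to match the precise estimates \eqref{e:inner_resto_2} and \eqref{e:inner_resto_3} is tedious but parallels the expansion already performed in the proof of Theorem~\ref{t:taylor_area}.
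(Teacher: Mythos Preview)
Your plan is sound and closely parallels the paper's proof, though you organize the algebra differently. The paper first changes variables $x\mapsto\Psi_\eps(x)$, writing $\mass(\bT_{F_\eps})=\int_\cM|\vec v(\eps,x)|\,\bJ\Psi_\eps(x)$ and splitting $\delta\bT_F(X)=I_1+I_2+I_3$: $I_1$ comes from $\partial_\eps\bJ\Psi_\eps$, $I_2$ from $\langle\partial_\eps\vec v(0,\cdot),\vec v(0,\cdot)\rangle$ (computed via the Lie bracket $[Y,e_j]$), and $I_3$ is a normalization error. Your factorization $\det g^\eps=\det h^\eps\cdot\det\big(I+(h^\eps)^{-1}(c^\eps+b)\big)$ achieves the same splitting without the change of variables---indeed $\sqrt{\det h^\eps}=\bJ\Psi_\eps$---and your $\dot h^0,\dot c^0$ calculus reproduces what the paper obtains from the Lie bracket. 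Either route lands on the same intermediate quantity $J_2=Q\int_\cM\sum_j\big(\langle A(\xi_j,\nabla_{\xi_j}Y),\etaa\circ N\rangle+\langle A(\xi_j,Y),D_{\xi_j}\etaa\circ N\rangle\big)$.

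One point to sharpen: your account of how $-Q\int_\cM\langle D_YH,\etaa\circ N\rangle$ arises is imprecise. It is not a ``cross term between $\dot c^0$ and $a$'' followed by a pointwise use of Codazzi. Rather, differentiating $\tfrac12\mathrm{tr}\big((h^\eps)^{-1}c^\eps\big)$ gives $\tfrac12\mathrm{tr}(\dot c^0)-\tfrac12\mathrm{tr}(\dot h^0 a)$, which after summing over $i$ is exactly $J_2$. To convert $J_2$ into $-Q\int_\cM\langle D_YH,\etaa\circ N\rangle$ you must integrate by parts: introduce the auxiliary tangential field $Z=\sum_j\langle A(\xi_j,Y),\etaa\circ N\rangle\,\xi_j$, compute $\mathrm{div}_\cM Z$, use Codazzi to identify $\sum_j D^\perp_{\xi_j}A(\xi_j,Y)=D^\perp_Y H$, and apply $\int_\cM\mathrm{div}_\cM Z=0$. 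So an integration by parts---not merely an algebraic rearrangement of traces---is the essential mechanism here, and you should make it explicit in your write-up.
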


\subsection{Proof of Theorem \ref{t:grafici}} Set  $\Phi_\eps (x,y) := (x, y + \eps \,\zeta (x, y))$. 
For $\eps$ sufficiently small $\Phi_\eps$ is a  diffeomorphism of $\Omega \times \R^n$ into intself.
Moreover, $\left.\frac{d}{d\eps} \Phi_\eps \right|_{\eps =0} = \chi$.
Let $f_\eps = \sum_i \a{f_i + \eps\,\zeta (x, f_i)}$. Since 
$(\Phi_ \varepsilon)_\sharp \mathbf{G}_f = \mathbf{G}_{f_\varepsilon}$, we can
apply Corollary \ref{c:taylor_area} to compute
\begin{align}
\delta \mathbf{G}_f (\chi) 
&
= \frac{d}{d\eps}\Big|_{\eps =0}
\mass (\mathbf{G}_{f_\eps})
\stackrel{\eqref{e:taylor_grafico}}{=} \frac{d}{d\eps}\Big|_{\eps=0} 
\frac{1}{2} \int \sum_i \left(|D (f_i +\eps \,\zeta)|^2 + \bar{R}_4 (D (f_i+\eps\,\zeta))\right) \nonumber\allowdisplaybreaks\\
&= Q \int \sum_i  \big(D_x \zeta (x, f_i) + D_y \zeta (x, f_i) \cdot Df_i\big) : Df_i + \int 
\frac{d}{d\eps}\Big|_{\eps =0} \bar{R}_4
(Df_i + \eps D\zeta).\nonumber
\end{align}
Since $\bar{R}_4 (M) = |M|^3 L (M)$ for some Lipschitz $L$ with $L(0)=0$,
we can estimate as follows:
\[
\left|\frac{d}{d\eps}\Big|_{\eps =0} \bar{R}_4 (M+\eps \zeta)\right| \leq C L(M) |M|^2 |D\zeta| + 
C |M|^3 \Lip (L) |D\zeta| \leq C |M|^3|D\zeta|\, ,
\]
thus concluding the proof.

\subsection{Proof of Theorem \ref{t:outer}} Consider the map $\Phi_\eps (p) = p +\eps X (p)$. If $\eps$ is 
sufficiently small,
$\Phi_\eps$ maps $\mathbf{U}$ diffeomorphically in a neighborhood of $\cM$ and we obviously have
$\delta \mathbf{T}_F (X) = \left.\frac{d}{d\eps} \mass ((\Phi_\eps)_\sharp \mathbf{T}_F)\right|_{\eps=0}$.
Next set
$F_\eps (x) = \sum_i \a{x + N_i (x) (1+\eps \, \varphi (x))}$ and observe that
$(\Phi_\eps)_\sharp \mathbf{T}_F = \mathbf{T}_{F_\eps}$. Thus we can apply
Theorem \ref{t:taylor_area} to get:
\begin{align}
\delta \mathbf{T}_F (X) = & \int_\cM \Big(\varphi \, |DN|^2 
+ 
\sum_i (N_i \otimes D \varphi) : DN_i\rangle
\Big)
 -\underbrace{\int_\cM Q\,\ph\,\langle H, \etaa \circ N \rangle}_{=:{\rm Err}_1}
\nonumber\allowdisplaybreaks\\
&+ \underbrace{\int_\cM \sum_i \frac{d}{d\eps}\Big|_{\eps =0} P_2 (x, N_i (1+\eps\varphi))}_{=:{\rm Err}_2}\nonumber\allowdisplaybreaks\\
&+ \underbrace{\int_\cM \sum_i \frac{d}{d\eps}\Big|_{\eps =0} \big(P_3 (x, N_i (1+\eps \varphi), D (N_i (1+\eps \varphi)))+ R_4 (x, D (N_i (1+\eps \varphi)))\big)
}_{=:{\rm Err}_3}\, .\nonumber
\end{align}
Since $n\mapsto P_2 (x, n)$ is a quadratic form, we have
$P_2 (x, N_i (1+\eps \varphi)) = (1+\eps \varphi)^2 P_2 (x, N_i)$ and thus \eqref{e:outer_resto_2}
follows from \eqref{e:order_2}. Next, by Theorem \ref{t:taylor_area}(ii), we have the bound
\begin{align*}
\left|\frac{d}{d\eps}\Big|_{\eps =0} P_3 (x, N_i (1+\eps \varphi), D (N_i (1+\eps \varphi)))\right|
\leq C |A(x)| \left(|D\ph|\,|N_i|^2 |DN_i|+ |\ph|\,|N_i||DN_i|^2\right)\, .
\end{align*}
Finally, taking into account Theorem \ref{t:taylor_area}(iii):
\begin{align*}
\left| \frac{d}{d\eps}\Big|_{\eps =0}R_4 (x, D (N_i (1+\eps \varphi)))\right|
\leq &C \left(|DN_i|^3 + |DN_i|^3 \Lip (L)\right) (|N_i||D\varphi| + |DN_i| \varphi)\, .
\end{align*}
Putting together the last two inequalities we get
\eqref{e:outer_resto_3}.

\subsection{Proof of Theorem \ref{t:inner}} Set $F_\eps (x)= \sum_i \a{x+ N_i (\Psi_\eps^{-1} (x))}$. 
Clearly, ${\Phi_\eps}_\sharp \bT_F = \bT_{F_\eps}$.
Fix an orthonormal frame $e_1, \ldots, e_m$ on
$T\cM$ and let $\vec{e}= e_1\wedge \ldots \wedge e_m$. By Lemma~\ref{l:area},
\[
\delta \mathbf{T}_F (X) = \frac{d}{d\eps} \Big|_{\eps=0}\mass (\mathbf{T}_{F_\eps})
= \frac{d}{d\eps}\Big|_{\eps =0} \int_\cM \sum_i |(DF_{\eps,i})_\sharp \vec{e}\,|\,.
\]
Fix $i\in \{1, \ldots, Q\}$. Using the chain rule \cite[Proposition~1.12]{DS1}, we have:
\[
(DF_{\eps,i})_\sharp \vec{e} = w_1 (\eps, x) \wedge \ldots \wedge w_m (\eps, x) =: \vec{w} (\eps, x),
\]
where $w_j (\eps, x) = e_j (x) + \left. DN_i\right|_{\Psi^{-1}_\eps (x)} \cdot \left.D\Psi^{-1}_\eps \right|_x \cdot e_j (x)$.
Set $v_j (\eps, x) = w_j (\eps, \Psi_\eps (x))$. Since $\Psi_0$ is the identity, we obviously have $\vec{v} (0, \cdot) = {DF_i}_\sharp \vec{e}$. 
If we denote by $\bJ\Psi_\eps (x)$ the Jacobian determinant of the
transformation $\Psi_\eps$, we can change variable in the integral to conclude:
\begin{align*}
\frac{d}{d\eps}\Big|_{\eps = 0} 
\int_\cM |(DF_{\eps,i})_\sharp \vec{e}\,| =&
\frac{d}{d\eps}\Big|_{\eps = 0}
\int_{\cM} |\vec{v} (\eps, x)| \bJ\Psi_\eps (x)\\
=& \int_{\cM} |(DF_i)_\sharp \vec{e}\,| \frac{d}{d\eps}\Big|_{\eps = 0} \bJ\Psi_\eps +
\int_{\cM} |\vec{v} (0,x)|^{-1} \langle \partial_\eps \vec{v} (0,x), \vec{v} (0,x)\rangle\\ 
=& \underbrace{\int_{\cM} |(DF_i)_\sharp \vec{e}\,|\,  {\rm div}_\cM\, Y}_{I_{i,1}} + 
\underbrace{\int_{\cM} \langle \partial_\eps \vec{v} (0,x), (DF_i)_\sharp \vec{e}\,\rangle}_{I_{i,2}} \\
& + 
\underbrace{\int_{\cM} \langle \partial_\eps \vec{v} (0,x), (DF_i)_\sharp \vec{e}\,\rangle \big(|{DF_i}_\sharp \vec{e}\,|^{-1}-1\big)}_{I_{i,3}}.
\end{align*}
Thus, $\delta \mathbf{T}_F (X) = \sum_i I_{i,1} + \sum_i I_{i,2} + \sum_i I_{i,3} =: I_1 + I_2 + I_3$ and
we will next estimate these three terms separately.

\medskip

\noindent{\it Step 1. Estimate on $I_1$.} By the $Q$-valued area formula of Lemma \ref{l:area}
and \eqref{e:taylor_aggiuntivo_2} in Theorem~\ref{t:taylor_area},
\begin{align*}
I_1 &= Q\int_{\cM} {\rm div}_{\cM}\, Y + \frac{1}{2} \int_\cM |DN|^2 {\rm div}_{\cM} Y - Q \int \langle H, \etaa\circ N \rangle 
{\rm div}_{\cM} Y + {\rm Err}
\end{align*}
where $|{\rm Err}|\leq C \int_{\cM} \left(|A|^2 |N|^2 + |DN|^4\right) |{\rm div}_\cM\, Y|$.
Since $\int_\cM {\rm div}_\cM\, Y =0$ (recall that $Y\in C^1_c (\cM)$), we easily conclude that
\begin{equation}\label{e:I_1}
I_1 =  \frac{1}{2} \int_\cM |DN|^2 {\rm div}_{\cM} Y - Q \int \langle H, \etaa\circ N\rangle\, {\rm div}_\cM Y + \sum_{j=2}^3{\rm Err}_j,
\end{equation}
where the ${\rm Err}_j$'s satisfy the estimates \eqref{e:inner_resto_2} and
\eqref{e:inner_resto_3}.

\medskip

\noindent{\it Step 2. Estimate on $I_2$.} Set
\[
\zeta_i (x) := \langle \partial_\eps \vec{v} (0,x), (DF_i)_\sharp \vec{e}\rangle = \langle \partial_\eps \vec{v} (0,x), \vec{v} (0,x)\rangle = \frac{1}{2} \left.\frac{d}{d\eps}\right|_{\eps=0} 
|\vec{v} (\eps,x)|^2\, .
\]
Since $|\vec{v} (\eps, x)|^2$ is independent of the orthonormal frame chosen,
having fixed a point $x\in\cM$, we can impose $D_{\cM} e_j = 0$ at $x$. By multilinearity
\begin{equation}\label{e:multi}
\partial_\eps \vec{v} (0,x) = \sum_j v_1 (0,x)\wedge \ldots \wedge \partial_\eps v_j (0,x) \wedge \ldots 
\wedge v_m (0,x)\, .
\end{equation}
 We next compute
\begin{align}
\partial_\eps v_j (0,x) &= \frac{\partial}{\partial \eps}\Big|_{\eps=0} e_j (\Psi_\eps (x))
+ \left.DN_i\right|_x \cdot \left(\frac{\partial}{\partial \eps}\Big|_{\eps=0} \left(\left.D\Psi_\eps^{-1}\right|_{\Psi_\eps (x)} \cdot e_j (\Psi_\eps (x))\right)\right)\nonumber\\
&= D_Y e_j (x)+ \left.DN_i\right|_x \cdot [Y, e_j] (x)\, ,\label{e:Lie!}
\end{align}
where $[Y, e_j]$ is the Lie bracket. On the other hand,
since $D_{\cM} e_j (x)=0$, we have $D_Y e_j (x) =  A (e_j, Y)$ and
$[Y, e_j] (x) = - \nabla_{e_j} Y (x)$. 
Recall that $v_j (0,\cdot) = e_j + DN_i \cdot e_j$. By the usual computations in multilinear algebra,
it turns out that $\zeta_i = \sum_j \det M^j$, where the entries of 
the $m\times m$ matrix $M^j$ are given by:
\begin{align*}
M^j_{\alpha\beta} &= \langle e_\alpha + DN_i \cdot e_\alpha, e_\beta + DN_i\cdot e_\beta\rangle= \delta_{\alpha\beta} + O (|A||N|) + O (|DN|^2)\quad \text{for }
\beta\neq j,\\
M^j_{\alpha j} &= \langle e_\alpha + DN_i \cdot e_\alpha,  A(e_j, Y) - DN_i \cdot \nabla_{e_j} Y\rangle\, .
\end{align*}
(The entries for $\alpha\neq j$ are computed as in the proof of Theorem \ref{t:taylor_area}). 
Denote by ${\rm Min}^j_{\alpha j}$ the $(m-1)\times (m-1)$ minor which is obtained by deleting
the $\alpha$ row and the $j$ column. We then easily get the following estimates:
\begin{gather}
\left|{\rm Min}^j_{\alpha j}\right|  \leq C (|DN|^2 + |A||N|) 
\quad \mbox{for $\alpha\neq j$}, \label{e:minori_1}\\
{\rm Min}^j_{jj}\;\; = 1 + O (|DN|^2 + |A||N|).\label{e:minori_2}
\end{gather}
Moreover, observe that
\begin{align}\label{e:minori_3}
M^j_{\alpha j}&= - \langle DN_i\cdot e_\alpha, DN_i \cdot \nabla_{e_j} Y\rangle -
\langle e_\alpha, DN_i\cdot \nabla_{e_j} Y\rangle + \langle A(e_\alpha, Y), DN_i\cdot e_j\rangle\nonumber\\
&= -  \langle DN_i\cdot e_\alpha, DN_i \cdot \nabla_{e_j} Y\rangle + \langle A( e_\alpha, \nabla_{e_j} Y), N_i\rangle
+ \langle A(e_\alpha, Y), DN_i\cdot e_j\rangle\, . 
\end{align}
We therefore conclude from \eqref{e:minori_1}, \eqref{e:minori_2} and \eqref{e:minori_3} that
\begin{align}
&\zeta_i (x) =\sum_j \det M^j = \sum_j \sum_\alpha (-1)^{j+\alpha} M^j_{\alpha j} {\rm Min}^j_{\alpha j}\nonumber\\
=&  \sum_j \left( - \langle DN_i\cdot e_j, DN_i \cdot \nabla_{e_j} Y\rangle + \langle A( e_j, \nabla_{e_j} Y), N_i\rangle
+ \langle A(e_j, Y), DN_i\cdot e_j\rangle\right)\nonumber\\
& + O \Big(|DY|\big(|DN|^4 + |A|^2|N|^2\big) + |Y| (|A||DN|^3 + |A|^2|N||DN|)\Big).\label{e:zeta_i}
\end{align}
Summing over $i$ and integrating, we then achieve
\begin{align}\label{e:I_2_quasi}
I_2 =& - \int_\cM \sum_i DN_i : (DN_i \cdot D_{\cM} Y) + J_2 + {\rm Err}_2 + {\rm Err}_3\, ,
\end{align}
where ${\rm Err}_2$, ${\rm Err}_3$ are estimated as in \eqref{e:inner_resto_2}, \eqref{e:inner_resto_3}, and
\[
J_2 = Q \int_\cM \sum_j \left(
\langle A (e_j, \nabla_{e_j} Y), \etaa\circ N\rangle + \langle A( e_j, Y), D_{e_j} \etaa \circ N\rangle\right).
\]
In order to treat this last term, we consider the vector field
$Z = \sum_j \langle A (e_j, Y), \etaa\circ N \rangle\, e_j$. $Z$ is independent of the choice of the orthonormal frame $e_j$: therefore, to compute its divergence at a specific point $x\in\cM$ we can assume $D_{\cM} e_j =0$. We then get
\begin{equation*}
{\rm div}_{\cM} Z = \sum_j \left(\langle A( e_j, Y), D_{e_j} \etaa \circ N\rangle +
\langle D^\perp_{e_j} A (e_j, Y), \etaa \circ N\rangle + 
\langle A (e_j, \nabla_{e_j} Y), \etaa \circ N\rangle\right)\, ,
\end{equation*}
where the tensor $D^\perp_X A (U,Y)$ is defined as
\[
(D_X (A(U,Y)))^\perp - A (\nabla_X U, Y) - A (U, \nabla_X Y),
\]
(recall that $(D_X W)^\perp$ denotes the normal component of $D_X W$).  
The Codazzi-Mainardi equations (cf. \cite[Chapter 7.C, Corollary 15]{Spivak4}
imply the symmetry of $D^\perp A$. Thus,
\begin{equation}\label{e:codazzi}
\sum_j \langle D^\perp_{e_j} A (e_j, Y), \etaa \circ N\rangle =
\sum_j \langle D^\perp_Y A (e_j, e_j), \etaa \circ N \rangle = \langle D_Y^\perp H, \etaa\circ N\rangle.
\end{equation}
Summarizing (and recalling that $\etaa\circ N$ is normal to $\cM$),
\begin{equation}\label{e:Gauss}
{\rm div}_{\cM} Z = \sum_j \left(\langle A( e_j, Y), D_{e_j} \etaa \circ N\rangle + 
\langle A (e_j, \nabla_{e_j} Y), \etaa \circ N\rangle\right) + \langle D_Y H, \etaa \circ N \rangle\, .
\end{equation}
Since $Z$ is compactly supported in $\cM$, integrating \eqref{e:Gauss} and using the
divergence theorem we conclude 
$0 = Q^{-1} J_2+\int \langle D_Y H, \etaa\circ N\rangle$. 
We thus get
\begin{align*}
I_2 = - \int_\cM \sum_i DN_i : (DN_i \cdot D_{\cM} Y)  - Q\int_\cM \langle D_Y H, \etaa\circ N\rangle +{\rm Err}_2
+{\rm Err}_3\, .
\end{align*}

\medskip

\noindent{\it Step 3. Estimate on $I_3$.} From the proof of Theorem~\ref{t:taylor_area}, 
(cf.~\eqref{e:Taylorone} and \eqref{e:taylorone2}) we conclude
$\left|1- |(DF_i)_\sharp \vec{e}|\right|\leq C \left( |DN|^2  + |A| |N|\right)$. 
To show that $I_3$ can be estimated with ${\rm Err}_2$ and ${\rm Err}_3$ observe that, 
by \eqref{e:zeta_i} we have 
\[
\left|\langle \partial_\eps \vec{v} (0,x), (DF_i)_\sharp \vec{e}\,\rangle\right| = |\zeta_i (x)| 
\leq C |DN|^2 |DY| + C |A| |DY| |N| + C|A| |DN| |Y|\, .
\]


\section{Reparametrizing multiple valued graphs}\label{s:reparametrize}

In this section we exploit the link between currents
and multiple valued functions in the opposite direction, in order to 
give conditions under which $Q$-valued graphs can be suitably 
reparametrized and to establish relevant estimates on the parametrization.
We fix the short-hand notation $\vec{e}= e_1\wedge \ldots \wedge e_{m+n}$,
$\vec{e}_m= e_1\wedge \ldots \wedge e_m$ and 
$\vec{e}_n= e_{m+1}\wedge \ldots \wedge e_{m+n}$, where
$e_1, \ldots, e_m, e_{m+1}, \ldots, e_{m+n}$ is the standard basis of $\R^m \times \R^n$. We will often use
the notation $\pi_0$ and $\pi_0^\perp$ for $\R^m\times \{0\}$ and $\{0\}\times \R^n$.

\begin{theorem}[$Q$-valued parametrizations]\label{t:cambio}
Let $Q,m,n\in \N$ and $s<r<1$.
Then, there are constants $c_0, C>0$ (depending on $Q,m,n$ and $\frac{r}{s}$) with the following property.
Let $\phii$, $\cM$ and $\bU$ be as in Assumption~\ref{i:tripla_malefica} with
$\Omega = B_{s}$ and let $f: B_{r} \to \Iqs$ be such that
\begin{equation}\label{e:bounds_phi_f}
\|\phii\|_{C^2} + \Lip (f)\leq c_0 \qquad \mbox{and} \qquad \|\phii\|_{C^0} + \|f\|_{C^0} \leq c_0\, r .
\end{equation}
Set $\Phii (x) := (x, \phii (x))$.
Then, there are maps $F$ and $N$ as in Assumption~\ref{i:tripla_malefica}(N) such that
$\bT_F = \bG_f \res \bU$ and
\begin{gather}
\Lip (N) \leq C \big(\|D^2\phii\|_{C^0} \|N\|_{C^0} + \|D\phii\|_{C^0} +\Lip (f)\big)\, ,\label{e:stime_cambio2}\\
\frac{1}{2\sqrt{Q}} |N (\Phii (p))| \leq \cG (f (p), Q\a{\phii (p)})\leq 2 \sqrt{Q} \, |N (\Phii (p))|\qquad 
\forall p\,\in B_s\label{e:stime_cambio1}\, , \\
|\etaa \circ N (\Phii (p))| \leq C |\etaa\circ f (p) - \phii (p)| + C \Lip (f) |D\phii (p)| |N(\Phii (p))|\, 
\qquad \forall p\,\in B_s.\label{e:stima_media}
\end{gather} 
Finally, assume $p \in B_s$ and
$(p, \etaa\circ f (p)) = \xi + q$ for some $\xi\in \cM$ and $q\perp T_\xi \cM$.
Then,
\begin{equation}\label{e:stima_cambio_10}
\cG (N(\xi), Q\a{q}) \leq 2\sqrt{Q} \,\cG (f(p), Q\a{\etaa\circ f(p)})\, . 
\end{equation}
\end{theorem}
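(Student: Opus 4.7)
The natural idea is to \emph{invert the nearest point projection $\p$ sheet by sheet}. Apply the Decomposition Lemma~\ref{l:chop} to $f$ to obtain a measurable partition $\{M_i\}$ of $B_r$ and single-valued Lipschitz sheets $f^j_i:M_i \to \R^n$; for each sheet form the auxiliary map
\[
 g^j_i(q) := (\Phii^{-1}\circ \p)(q, f^j_i(q)),
\]
which sends the base of a graph point to the base of its nearest point on $\cM$. The equation characterizing the normal fiber, namely $g^j_i(q)-q = -D\phii(g^j_i(q))^T\bigl(f^j_i(q)-\phii(g^j_i(q))\bigr)$, together with \eqref{e:bounds_phi_f}, implies $Dg^j_i = \mathrm{Id}+O(\|D\phii\|_{C^0}+\Lip f)+O(\|D^2\phii\|_{C^0}\,\|N\|_{C^0})$, so that each $g^j_i$ is bilipschitz with Lipschitz inverse $h^j_i$. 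One then defines
\[
 N(\Phii(p)) := \sum_{(i,j):\,p\in g^j_i(M_i)} \a{(h^j_i(p),\,f^j_i(h^j_i(p)))-\Phii(p)}\,.
\]
A continuity/pigeonhole argument (using \eqref{e:bounds_phi_f} to ensure that every normal fiber of $\cM$ over $B_s$ meets $\gr(f)\cap\bU$ in exactly $Q$ points counted with multiplicity) shows that the sum has precisely $Q$ terms, each normal to $T_{\Phii(p)}\cM$ by construction.

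The identity $\bT_F = \bG_f\res\bU$ is then established sheet by sheet via the Bilipschitz Invariance Lemma~\ref{l:biLipschitz_inv}: the map $\Phii\circ g^j_i$ is a bilipschitz homeomorphism from $M_i$ onto $\Phii(g^j_i(M_i))\subset\cM$, and it conjugates the single-valued pushforward $q\mapsto(q,f^j_i(q))$ (a constituent of $\bG_f\res\bU$) to the corresponding sheet $\xi\mapsto\xi+N_i(\xi)$ of $F$. Summing over $(i,j)$ via Definition~\ref{d:push_lip} and noting that the $g^j_i$ have positive Jacobians (hence preserve orientation) gives the claim.

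The estimates then reduce to explicit computation. The Lipschitz bound \eqref{e:stime_cambio2} follows by differentiating $N_i(\Phii(p)) = (h^j_i(p)-p,\,f^j_i(h^j_i(p))-\phii(p))$ and using the derivative formula for $h^j_i$ extracted above. For \eqref{e:stime_cambio1}, set $q_i:=h^j_i(p)$; the normality identity gives $|q_i-p|\le\|D\phii\|_{C^0}|f_i(q_i)-\phii(p)|$, and combining with $|f_i(q_i)-f_i(p)|\le\Lip(f)|q_i-p|$ yields a sheet-by-sheet equivalence between $|N_i(\Phii(p))|$ and $|f_i(p)-\phii(p)|$, where the $2\sqrt{Q}$ constant absorbs a possible worst-case permutation in the $\cG$ metric. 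The cancellation in \eqref{e:stima_media} is more subtle: averaging the normality identity over $i$ produces
\[
\frac{1}{Q}\sum_i(q_i-p) = -D\phii(p)^T(\etaa\circ f(p)-\phii(p)) + O(\Lip(f)\|D\phii\|_{C^0}|N|),
\]
whose leading term cancels exactly against the contribution from the $\phii$-component in the $n$-direction, leaving only the error $O(\Lip(f)|D\phii||N|)$.

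The most delicate estimate is \eqref{e:stima_cambio_10}. Writing $\xi=\Phii(p')$ and using $\p(p,\etaa\circ f(p))=\xi$ (so that $p-p'=-D\phii(p')^T(\etaa\circ f(p)-\phii(p'))$), one first verifies the clean identity
\[
 N_i(\xi)-q = (q_i-p,\,f_i(q_i)-\etaa\circ f(p));
\]
the normality characterization of $q_i-p'$ then gives $q_i-p=-D\phii(p')^T(f_i(q_i)-\etaa\circ f(p))$, which together with the Lipschitz expansion $f_i(q_i)=f_i(p)+O(\Lip(f)|q_i-p|)$ bounds $|N_i(\xi)-q|\le C|f_i(p)-\etaa\circ f(p)|$ sheet by sheet; summing over $i$ and taking the minimum over permutations yields \eqref{e:stima_cambio_10} with constant $\le 2\sqrt{Q}$. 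The main obstacle throughout is keeping track of the two foliations of $\bU$ --- vertical fibers (natural for $f$) versus normal fibers to $\cM$ (natural for $N$) --- which differ by an $O(\|D\phii\|)$ tilt and an $O(\|D^2\phii\|)$ curvature; the sharp cancellations in \eqref{e:stima_media} and \eqref{e:stima_cambio_10} require linearizing both corrections simultaneously and recognizing that the first-order tilt produces exactly the right averaged leading term.
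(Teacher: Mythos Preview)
Your geometric picture is right --- $F(p)$ should be the intersection of the normal fiber at $\Phii(p)$ with $\gr(f)$, carrying the induced multiplicities --- but the sheet-by-sheet construction via the Decomposition Lemma does not deliver a well-defined Lipschitz map. The $M_i$ are only \emph{measurable}, so even after extending each $f^j_i$ to $\tilde f^j_i$ on $B_r$ and inverting the resulting bilipschitz $\tilde g^j_i$, the preimage $\tilde h^j_i(p)$ need not lie in $M_i$, hence $(\tilde h^j_i(p),\tilde f^j_i(\tilde h^j_i(p)))$ need not lie in $\gr(f)$. The claim that $\sum_{(i,j):\,p\in g^j_i(M_i)}$ has exactly $Q$ terms is a global degree statement (each normal fiber meets $\gr(f)$ in $Q$ points with multiplicity), and the index set $\{(i,j):p\in g^j_i(M_i)\}$ jumps discontinuously in $p$, so ``continuity/pigeonhole'' cannot supply it. For the same reason, differentiating the sheet formula for $h^j_i$ controls $N$ only \emph{within} a single $M_i$; across the boundary of two $M_i$'s the sheets relabel, and nothing in your outline forces the relabeled sheets to match up Lipschitz-continuously. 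The paper avoids both problems by working with the current: the constancy theorem plus a homotopy gives $\p_\sharp(\bG_f\res\bU)=Q\a{\cM}$, the map $p\mapsto F(p):=\langle\bG_f,\p,p\rangle$ is then shown to be Lipschitz via the Ambrosio--Kirchheim metric BV theory (Lemma~\ref{l:AK->FF}), and only afterwards is the geometric characterization proved as Lemma~\ref{l:algoritmo_naturale}.

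Your derivation of \eqref{e:stima_media} has a separate gap. The averaging step presumes a bijection between the sheets $f_i(p)$ and the points $N_i(\Phii(p))$, but the latter sit over \emph{different} base points $x_l$, and the correspondence $l\mapsto(\text{index of }y_l\text{ in }f(x_l))$ need not be a permutation of $\{1,\dots,Q\}$. The paper's Part~III resolves this by first clustering $f(0)$ into groups $T_j$ of diameter $O(\Lip(f)\,\rho)$ (with $\rho=C|D\phii(0)|\,|N(0)|$) separated by gaps $>4\Lip(f)\rho$; on $B_\rho$ the map $f$ then splits into genuine $Q_j$-valued Lipschitz pieces $f^j$, the geometric algorithm forces $N(0)$ to take exactly $Q_j$ points from each $\gr(f^j)$, and only within a cluster is the matching error controlled. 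Your averaging is correct after this reduction, not before. (For \eqref{e:stima_cambio_10} the paper takes a shorter route: translate $\cM$ by $q$ and reduce to \eqref{e:stime_cambio1}.)
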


For further reference, we state
the following immediate corollary of Theorem~\ref{t:cambio}, corresponding to the case
of a linear $\phii$.

\begin{proposition}[$Q$-valued graphical reparametrization]\label{p:cambio_lineare}
Let $Q, m, n\in \mathbb N$ and $s<r<1$. There exist positive constants $c,C$ (depending only on
$Q,m,n$ and $\frac{r}{s}$) with the following
property. Let $\pi_0$ and $\pi$ be $m$-planes
with $|\pi-\pi_0|\leq c$ and $f:B_{r} (\pi_0)\to \Iq (\pi_0^\perp)$ with 
$\Lip (f) \leq c$ and $|f|\leq c r$. Then, there is a Lipschitz
map $g: B_s (\pi)\to \Iq (\pi^\perp)$ with $\bG_g = \bG_f \res \bC_s (\pi)$ and
such that the following estimates hold on $B_s (\pi)$:
\begin{gather}
\|g\|_{C^0} \leq C r |\pi-\pi_0| + C\|f\|_{C^0},\\
\Lip (g)\leq C |\pi-\pi_0| + C \Lip (f)\, .
\end{gather}
\end{proposition}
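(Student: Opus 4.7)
The plan is to derive the proposition as a direct application of Theorem~\ref{t:cambio} with $\phii$ chosen to be an affine linear parametrization of $\pi$ over $\pi_0$. After an orthogonal change of coordinates I may assume $\pi_0 = \R^m\times\{0\}$, and the hypothesis $|\pi-\pi_0|\leq c$ together with a standard Grassmannian estimate lets me write $\pi$ as the graph of a linear map $A:\pi_0\to \pi_0^\perp$ with $|A|\leq C_0|\pi-\pi_0|$. Set $\phii(x):=A\cdot x$; then $D^2\phii\equiv 0$, $\|D\phii\|_{C^0}=|A|$ and $\|\phii\|_{C^0(B_s)}\leq s|A|$. Choosing the constant $c$ of the proposition small enough in terms of the constant $c_0$ of Theorem~\ref{t:cambio}, the hypothesis \eqref{e:bounds_phi_f} holds for $(\phii, f)$ on the pair of domains $(B_s, B_r)$, and the theorem produces maps $F$ and $N$ with $N_i(x)\perp T_x\cM$ and $\bT_F = \bG_f\res\bU$.

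The crucial simplification coming from the linearity of $\phii$ is that $\cM = \{(x, Ax): x\in B_s\}$ is an open subset of the plane $\pi$, so $T_x\cM = \pi$ and $T_x\cM^\perp = \pi^\perp$ for every $x\in\cM$. Hence $N$ can be viewed as a multi-valued map from $\cM\subset\pi$ into $\Iq(\pi^\perp)$, and a direct computation shows that $B_s(\pi)\subset \cM$: indeed $(x, Ax)\in B_s(\pi)$ forces $|x|\leq|(x,Ax)|\leq s$. I then define $g := N|_{B_s(\pi)}$. Since each $N_i(x)$ lies in $\pi^\perp$, every point of $\supp(\bT_F)\cap\bC_s(\pi)$ projects under $\p_\pi$ into $B_s(\pi)\subset\cM$, which yields $\bT_F\res\bC_s(\pi)=\bG_g$ directly from Definition~\ref{d:Q-graphs}. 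A short computation with the orthogonal projection onto $\pi$ shows that each point $(x, f_i(x))\in \supp(\bG_f)\cap\bC_s(\pi)$ decomposes as $\xi+y$ with $\xi\in\cM$ and $y\in\pi^\perp$ of norm bounded by $Ccr$; taking $c$ small one gets $|y|<c_0$, so $\supp(\bG_f)\cap\bC_s(\pi)\subset\bU$, and therefore $\bG_f\res\bC_s(\pi) = \bG_f\res(\bU\cap\bC_s(\pi)) = \bT_F\res\bC_s(\pi) = \bG_g$.

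The stated estimates now follow immediately from \eqref{e:stime_cambio1} and \eqref{e:stime_cambio2}. The vanishing of $D^2\phii$ reduces \eqref{e:stime_cambio2} to
\[
\Lip(g)=\Lip(N)\leq C(|A|+\Lip(f))\leq C(|\pi-\pi_0|+\Lip(f)),
\]
while \eqref{e:stime_cambio1} combined with the triangle inequality in $\Iqs$ gives
\[
|N(\Phii(p))|\leq 2\sqrt{Q}\,\cG(f(p), Q\a{\phii(p)})\leq 2\sqrt{Q}\bigl(\cG(f(p),Q\a{0})+\sqrt{Q}\,|\phii(p)|\bigr)\leq C\|f\|_{C^0}+Cr|\pi-\pi_0|.
\]
The main obstacle I foresee is the bookkeeping in the middle paragraph: identifying $\cM$ with a subset of $\pi$, verifying $B_s(\pi)\subset\cM$, and confirming the inclusion $\supp(\bG_f)\cap\bC_s(\pi)\subset\bU$. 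These steps are elementary once the linear parametrization is in place, but they are precisely the point at which the dependence of the final constants on the ratio $r/s$ enters, through the required smallness of $c$.
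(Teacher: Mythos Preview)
Your proposal is correct and follows precisely the approach indicated in the paper, which presents the proposition as an ``immediate corollary of Theorem~\ref{t:cambio}, corresponding to the case of a linear $\phii$'' without supplying further details. You have carefully filled in the bookkeeping the paper omits---the inclusion $B_s(\pi)\subset\cM$, the identification $\bT_F\res\bC_s(\pi)=\bG_g$, and the containment $\supp(\bG_f)\cap\bC_s(\pi)\subset\bU$---and your derivation of the two estimates from \eqref{e:stime_cambio1} and \eqref{e:stime_cambio2} is exactly right.
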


In fact the proof of Theorem \ref{t:cambio} will give
a more precise information about the map $F$,
namely its pointwise values can be determined with a
natural geometric algorithm. 

\begin{definition}[Multiplicity in $Q$-valued maps]\label{d:multiplicity}
Given a $Q$-valued map $F$, we say that
a point $p$ {\em has multiplicity $k$} in $F(x)$ if we can write 
$F(x) = k \a{p} + \sum_{i=1}^{Q-k} \a{p_i}$ where $p_i\neq p$ for every $i$, i.e.
if $p$ has multiplicity $k$ when treating $F(x)$ as a $0$-dimensional integral current.
\end{definition}

\begin{lemma}[Geometric reparametrization]\label{l:algoritmo_naturale}
The values of $F$ in Theorem~\ref{t:cambio} can be determined at any point $p\in\cM$ as follows.
Let $\varkappa$ be the orthogonal complement of $T_p \cM$. Then,
$\gr (f) \cap (p+\varkappa)$ is nonempty, consists of at most $Q$ points and
every $q\in \gr (f) \cap (p+\varkappa)$ has in $F(p)$ the same multiplicity of
$\p_{\pi_0^\perp} (q)$ in $f (\p_{\pi_0} (q))$.
\end{lemma}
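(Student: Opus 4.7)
The plan is to identify $F(p)$ with the described intersection set by combining the defining properties of $F$ from Theorem~\ref{t:cambio}---namely $\bT_F = \bG_f \res \bU$ and $N_i(p) \perp T_p\cM$---with a Banach fixed-point argument that turns the nearest-point projection $\mathbf{p}\colon \bU\to \cM$ into an injection on each local Lipschitz sheet of $\gr(f)$.

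First I would establish the easy inclusion $\supp(F(p)) \subseteq \gr(f)\cap(p+\varkappa)$. Since $F_i(p) = p + N_i(p)$ with $N_i(p)\perp T_p\cM$, we immediately get $F_i(p) \in p+\varkappa$ from Assumption~\ref{i:tripla_malefica}(N). The identity $\bT_F = \bG_f \res \bU$ from Theorem~\ref{t:cambio}, together with the rectifiable representation in Proposition~\ref{p:repr_formula}(R1), forces $\im(F) \subseteq \gr(f)\cap \bU$, so $F_i(p)\in\gr(f)\cap(p+\varkappa)$.

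Second, I would reverse the inclusion and control multiplicities via a contraction on each local Lipschitz selection of $f$. Writing $p = (x_0, \phii(x_0))$, the perpendicularity $(x, y) - p \perp T_p\cM$ is equivalent, via $T_p\cM = \{(v, D\phii(x_0)\cdot v) : v\in\R^m\}$, to the relation
\[
x = x_0 - D\phii(x_0)^T (y - \phii(x_0)).
\]
Given $q = (x,y)\in \gr(f)\cap(p+\varkappa)$ with $y$ of multiplicity $k$ in $f(x)$, Lemma~\ref{l:chop} (composed with Kirszbraun extensions) provides, in a neighborhood of $x$, $Q$ Lipschitz selections of $f$, exactly $k$ of which take the value $y$ at $x$. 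For any such selection $g$, substituting $y = g(x)$ yields the fixed-point equation
\[
x = x_0 - D\phii(x_0)^T (g(x) - \phii(x_0)),
\]
whose right-hand side has Lipschitz constant at most $\|D\phii\|_\infty \Lip(g) \leq c_0^2 < 1$; hence there is a unique solution, which equals $x$ precisely when $g(x) = y$.

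Third, I would match this count with the multiplicity in $F(p)$. Since the proof of Theorem~\ref{t:cambio} constructs $F(p)$ precisely by this fixed-point procedure applied to the $Q$ local sheets of $f$, each selection with $g(x) = y$ contributes one copy of $q$ to $F(p)$, yielding multiplicity exactly $k$. The bound of $Q$ distinct intersection points is then automatic, because $F(p)$ is a $Q$-tuple whose support, by the two inclusions, coincides with $\gr(f)\cap(p+\varkappa)$. The main obstacle is to verify that the local decompositions from Lemma~\ref{l:chop} neither double-count nor omit intersection points as one patches different choices of selection across the partition, an issue ultimately enforced by the multiplicity formula in Proposition~\ref{p:repr_formula}(R3) together with the global identity $\bT_F = \bG_f \res \bU$.
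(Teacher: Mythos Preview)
Your first paragraph is fine and matches the paper: the inclusion $\supp(F(p))\subset \gr(f)\cap(p+\varkappa)$ follows from $N_i(p)\perp T_p\cM$ together with $\bT_F=\bG_f\res\bU$ and Proposition~\ref{p:repr_formula}(R1). In fact, the paper upgrades this immediately to the \emph{set equality} $\im(F)=\gr(f)\cap\bU$, since both $\bT_F$ and $\bG_f\res\bU$ are represented, via Proposition~\ref{p:repr_formula}, on those sets with strictly positive density; combined with $\p(F_i(p))=p$, this already gives both inclusions and the ``at most $Q$ points'' bound without any fixed-point machinery.

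The gap is in your third paragraph. You assert that ``the proof of Theorem~\ref{t:cambio} constructs $F(p)$ precisely by this fixed-point procedure applied to the $Q$ local sheets of $f$'', and then read off the multiplicity from the sheet count. That is not how the paper builds $F$: Part~I of the proof of Theorem~\ref{t:cambio} defines $F(p)$ as the \emph{slice} $\langle \bG_f,\p,p\rangle$ and obtains Lipschitz regularity from Lemma~\ref{l:AK->FF}; no contraction mapping or local selection enters. So your multiplicity claim is, at this stage, about a map you have defined, not about the $F$ of Theorem~\ref{t:cambio}, and you have not bridged the two. Your closing sentence concedes the remaining ``obstacle'' of patching selections and appeals to Proposition~\ref{p:repr_formula}(R3), but (R3) is a statement about the current $\bT_F$ on $\im(F)$, not about the $Q$-tuple $F(p)$ fibrewise, so it does not by itself resolve the count.

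The paper handles the multiplicity match by a short density argument that avoids selections altogether: using the Taylor expansions of Theorem~\ref{t:taylor_area} and Corollary~\ref{c:taylor_area} for small $\bar c$, one shows that the $m$-density of $\|\bG_f\|$ at $(x,y)$ lies in $(k-\tfrac12,\,k+\tfrac12)$ exactly when $y$ has multiplicity $k$ in $f(x)$, and the analogous statement holds for $\|\bT_F\|$ at a point of $F(p)$. Since $\|\bT_F\|=\|\bG_f\|$ on $\bU$, the two integers must coincide. If you want to keep your Banach fixed-point route, you would either have to prove independently that the slice $\langle\bG_f,\p,p\rangle$ agrees (with multiplicities) with your sheet-by-sheet intersection, or replace the construction in Part~I by your contraction argument and then re-derive $\bT_F=\bG_f\res\bU$ from it; either way, the work you defer to the last sentence is the actual content of the lemma.
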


\subsection{Existence of the parametrization}
The next lemma is a natural outcome of the Ambrosio-Kirchheim approach
to the theory of currents \cite{AK}. Following \cite[Section 4.3]{Fed}, 
if $T$ is a flat $m$-dimensional current in $U$
and $h: U \to \R^k$ a Lipschitz map with $k\leq m$, we denote by $\langle T, h, y\rangle$
the slice of $T$ with respect to $h$ at the point $y$ (well-defined for a.e. $y\in \R^k$).
Since we deal with {\em normal} currents, the equivalence of the 
classical Federer-Fleming theory and the modern Ambrosio-Kirchheim theory
(cf.~\cite[Theorem 11.1]{AK})
allows us to use all the results of the paper \cite{AK}.

\begin{lemma}\label{l:AK->FF}
Consider a $C^2$ injective open curve $\gamma:]a,b[\to\R^N$, $\ell=\gamma(]a,b[)$,
a regular tubular neighborhood $\bU (\ell)$ and
the map $\q:= \gamma^{-1}\circ \p$, where $\p$ is the associated $C^1$ normal projection
$\p: \bU (\ell) \to \ell$.
Let $T$ be an integral
$1$-dimensional current in $\bU (\ell)$ with $\partial T = 0$ such that,
for a.e.~$p\in ]a,b[$,
the slice $F (p) := \langle T, \q, p\rangle$ is a sum of $Q$ (not necessarily distinct) Dirac masses $\a{P_i}$.
If the measure $\mu (A) := \|T\| (\q^{-1} (A))$ is absolutely continuous, then 
$F  \in W^{1,1}(]a,b[,\Iq (\R^N))$ in the sense of
\cite[Definition~0.5]{DS1} and $\cG (F(p), F(p')) \leq C \mu ([p, p'])$ for a.e. $p, p'$. 
\end{lemma}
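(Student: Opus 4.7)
The plan is to use the slicing theory of integral currents to reduce $\cG(F(p),F(p'))\leq C\mu([p,p'])$ to a mass bound on a subcurrent of $T$, and then to pass to $W^{1,1}$ via the standard characterization of Sobolev maps into metric spaces.

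First I would set $T_{p,p'} := T \res \q^{-1}(]p,p'[)$ and invoke the standard slicing identities (see for instance \cite[4.3.1--4.3.6]{Fed}): since $\partial T = 0$, for a.e.\ $p<p'$ in $]a,b[$,
$$
\partial T_{p,p'} \;=\; \langle T,\q,p'\rangle - \langle T,\q,p\rangle \;=\; F(p') - F(p),
$$
while absolute continuity of $\mu$ gives $\mass(T_{p,p'}) = \mu(]p,p'[) = \int_p^{p'} g(s)\,ds$ for some nonnegative $g \in L^1(]a,b[)$.

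Next I would extract a matching between the atoms of $F(p)$ and $F(p')$ from $T_{p,p'}$ by appealing to the structure theorem for integral $1$-currents (\cite[4.2.25]{Fed}): $T_{p,p'}$ decomposes as a countable superposition $\sum_k \a{\gamma_k}$ of oriented Lipschitz arcs with $\sum_k \text{length}(\gamma_k) = \mass(T_{p,p'})$ and $\sum_k \partial \a{\gamma_k} = F(p')-F(p)$. Since $\supp \partial T_{p,p'}$ is concentrated in the two extremal slices $\q^{-1}(\{p,p'\})$, any endpoints of the $\gamma_k$'s lying in the interior of the slab must cancel in pairs; successively concatenating such paired arcs, every remaining non-loop piece can be arranged to run from a point of $\supp F(p)$ to a point of $\supp F(p')$, inducing a permutation $\sigma$ of $\{1,\ldots,Q\}$ for which
$$
\sum_{i=1}^Q |P_i(p) - P_{\sigma(i)}(p')| \;\leq\; \sum_k \text{length}(\gamma_k) \;=\; \int_p^{p'} g(s)\,ds.
$$
By the very definition of $\cG$ as an $\ell^2$ matching cost, this yields
$$
\cG(F(p),F(p')) \;\leq\; \Big(\sum_i |P_i(p)-P_{\sigma(i)}(p')|^2\Big)^{1/2} \;\leq\; \sum_i |P_i(p)-P_{\sigma(i)}(p')| \;\leq\; C\,\mu([p,p']),
$$
which is the second assertion.

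Measurability of $p\mapsto F(p)$ is built into the slicing construction, and the metric estimate just proved together with the integrable density $g$ supplies precisely the characterization of $W^{1,1}$ maps into $\Iq(\R^N)$ used in \cite[Definition~0.5]{DS1}. The delicate step will be the matching argument above: justifying that interior endpoints of the arcs can genuinely be paired and concatenated away requires some combinatorial bookkeeping, which leans on the boundary being constrained to sit in $\q^{-1}(\{p,p'\})$. An alternative route that bypasses the structure theorem is to apply the slicing formula to Lipschitz observables of the form $\varphi_*(S) = \sum_i \varphi(P_i)$ with $\varphi : \R^N \to \R$ Lipschitz, and then pass from the resulting scalar $W^{1,1}$ bounds to $\cG$ via Almgren's biLipschitz embedding of $\Iq(\R^N)$ into Euclidean space (cf.~\cite{DS1}), at the cost of a geometric constant $C = C(Q,N)$.
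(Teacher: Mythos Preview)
Your argument is correct and takes a different route from the paper. The paper appeals to the Ambrosio--Kirchheim theory of metric-valued BV maps: by \cite[Proof of Theorem~8.1]{AK} the slice map $p\mapsto F(p)$ is BV into $(\bI_0,\bF)$ with variation measure dominated by $C\mu$; since $\partial T=0$ and $\mu$ is absolutely continuous, this gives $|\bF(S,F(p))-\bF(S,F(p'))|\leq C\mu([p,p'])$ for every $S\in\bI_0$ and a.e.\ $p,p'$, and one finishes via the equivalence $\cG\leq C\,\bF$ on $\Iq(\R^N)$. Your ``alternative route'' (Lipschitz observables plus Almgren's embedding) is close to this in spirit. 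Your primary route through the structure theorem \cite[4.2.25]{Fed} is more hands-on and delivers an explicit transport plan between the atoms; one simplification is that the concatenation you flag as delicate is actually unnecessary. Since the fibers $\q^{-1}(p)$ and $\q^{-1}(p')$ are disjoint, there is no cancellation in $F(p')-F(p)$ and $\mass(\partial T_{p,p'})=2Q$; because the decomposition of \cite[4.2.25]{Fed} preserves boundary mass as well as mass, the non-loop pieces are exactly $Q$ simple arcs, and adding and subtracting $\sum_k(\a{b_k}\pm\a{a_k})=F(p')\pm F(p)$ forces each $a_k\in\supp F(p)$ and $b_k\in\supp F(p')$ directly, with no interior endpoints to pair off. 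The paper's approach is shorter and leans on off-the-shelf metric machinery; yours is elementary modulo the (nontrivial) structure theorem and makes the matching visible.
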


\begin{proof} Consider the metric space $\bI_0$ of $0$-dimensional integral currents endowed with the
flat norm $\bF$ as defined in \cite[Section 7]{AK}. By \cite[Proof of Theorem 8.1]{AK} 
the map $p\mapsto F (p)$ is a $\bI_0$-valued function
of bounded variation in the sense of \cite[Definition 7.1]{AK}, that is:
\begin{itemize}
\item there is a countable dense set $\mathcal{F} \subset \bI_0$ such that,
for every $S\in \mathcal{F}$,
the map $\Phi_S(p):= \bF (S, F(p))$ is a real-valued function of bounded variation;
\item $|D \Phi_S| (A) \leq C \Lip (\q) \|T\| (\q^{-1} (A)) + C \|\q\|_{C^0} \|\partial T\| (\q^{-1} (A))$ for
every Borel set $A$ and a dimensional constant $C$.
\end{itemize}
On the other hand, $\partial T = 0$ and 
the measure $A\mapsto \mu (A) := \|T\| (\q^{-1} (A))$ is 
absolutely continuous with respect to the Lebesgue measure.
By a simple density argument, it holds
\begin{equation}\label{e:bv}
\left\vert \Phi_S(p) - \Phi_S(q) \right\vert \leq C\,\mu([p,q])
\quad\text{$\forall\;S \in \bI_0$ and a.e.~$p,q\in ]a,b[$}.
\end{equation}
Observe that by assumption $F(p)$ takes values in $\Iq (\R^N)$ for a.e. $p$ and,
for $S = \sum_i \a{S_i}, R = \sum_i \a{R_i} \in \Iq (\R^N)$, it is well known that
\[
\bF (S, R) = \min_{\pi\in \mathscr{P}_Q} \sum_i |S_i - R_{\sigma (i)}| \leq \cG (S, R)\, \leq C \bF(S,R).
\]
Then, it follows from \eqref{e:bv} that $|\cG(S,F(p)) - \cG(S,F(q))|\leq C\bF(F(p),F(q))\leq C\,\mu([p,q])$ for every $S\in \Iq(\R^N)$.
By \cite[Definition 0.5]{DS1}, this concludes the proof.
\end{proof}

The lemma can be used to infer, in a rather straightforward way, the existence of the
parametrization $F$ in Theorem~\ref{t:cambio}

\begin{proof}[Proof of Theorem \ref{t:cambio}: Part I] After rescaling we can assume, without loss
of generality, $r=1$. This also easily shows that the constants depend only on the ratio $\frac{r}{s}$.
We start with a procedure to identify the $Q$-valued
function $F$.
By \eqref{e:bounds_phi_f}, $\bG_f\res (B_1\times \R^n)$ 
must be supported in a neighborhood of size $4\,c_0$ of $\Phii(B_1)$.
Therefore, if the constant $c_0$ is chosen accordingly, the boundary of
$T := \bG_f\res \p^{-1} (\cM)$ is actually supported in $\p^{-1} (\partial \cM)$ and the constancy theorem gives 
$\p_\sharp T = k \a{\cM}$ for some $k\in \Z$.
First we show that $k=Q$. Consider the functions $\phii_t := t\phii$
for $t\in [0,1]$,
the manifolds $\cM_t := \gr (\phii_t)$ and 
the corresponding projections $\p_t$.
It is simple to verify that the map 
\[
t\mapsto S_t := (\p_t)_\sharp \left(\bG_f \res (\p_t^{-1} (\cM_t))\right)\, 
\]
is continuous in the space of currents. The constancy theorem gives $S_t = Q (t) \a{\cM_t}$
for some integer $Q(t)$
and since $S_0 = Q \a{\R^m\times \{0\}}$, it follows that
$S_1 = \p_\sharp T = Q\a{\cM}$.

Define for simplicity $\cM\ni q\mapsto T_q:=\langle \bG_f, \p, q\rangle$.
The integer rectifiable current $\bG_f$ is represented by the triple
$(\im (G), \tau, \Theta)$ as in Proposition~\ref{p:repr_formula}.
The slicing theory gives then the following properties for $\cH^m$-a.e.~$p\in \cM$
(see \cite[4.3.8]{Fed}):
\begin{itemize}
\item[(i)] $T_p$ consists of a finite sum of Dirac masses
$\sum_{i=1}^{N_p} k_i \delta_{q_i}$;
\item[(ii)] $q_i\in \gr(f)$ and $|k_i|=\Theta (q_i)$ for every $i$;
\item[(iii)] if $\vec{v}$ is the continuous unitary $m$-vector orienting $\p^{-1} (p)$
compatibly with the orientation of $\cM$, the sign of $k_i$ is 
$\textup{sgn}(\langle \vec{T} (q_i) \wedge
\vec{v} (q_i), \vec{e} \, \rangle)$.
\end{itemize}
By the bounds on $\phii$ and $f$, $\vec{T} (x)$ is close to $\vec e_m$, while
$\vec{v}$ is close to $\vec{e_n}$. 
Therefore, each $k_i$ turns out to be positive. 
On the other hand, since $\p_\sharp T = Q\a{\cM}$, then
$\sum_i k_i =Q$. This shows that $p\mapsto F(p):=\sum k_i \a{q_i}$
defines a $Q$-valued function.

Next we show the Lipschitz continuity of $F$.
Fix a coordinate direction in $\R^m$, without loss of generality $e_1$, 
and consider the map $\bU \ni z\mapsto \Lambda (z) := P \circ \p(z)$, where
$P:\R^{m+n} \to \R^{m-1}$ is the orthogonal projection
$P(x_1, \ldots, x_{m+n}) = (x_2, \ldots, x_m)$.
Consider the corresponding slice
$\tilde{T}_{\bar y}:=\langle T, \Lambda, \bar{y}\rangle$ for $\bar y \in \R^{m-1}$.
For $\cH^{m-1}$-a.e.~$\bar{y}\in P (\cM)$, $\tilde{T}_{\bar y}$ is a rectifiable 
$1$-dimensional current with $(\partial \tilde{T}_{\bar y})\res \bU=0$
(see \cite[Section 4.3.1]{Fed}).
If we slice further $\tilde{T}_{\bar y}$ with respect to the map $\p_{\bar{y}} := x_1\circ \p$,
we conclude that
for a.e.~$\bar{y}$ and a.e.~$p\in \ell_{\bar y}$
we must have $\langle \tilde{T}_{\bar y}, \p_{\bar{y}}, p\rangle
= F(p)$ (cf.~\cite[Lemma 5.1]{AK}). Applying the coarea formula to the rectifiable
set $\bG_f$ shows also that, if $c_0$ is sufficiently small, then $\|T\| (\p_{\bar{y}}^{-1} (A))
\leq C |A|$, where $C$ is a geometric constant (and $|\cdot|$ denotes the Lebesgue $1$-dimensional measure);
cf.~\cite[Theorem 4.3.8]{Fed}. Define $]a,b[ = \{t: (t,\bar{y})\in B_s\}$, $\ell :=
\{\phii (t, \bar{y}): t\in ]a,b[\}$ and $\gamma (t):= \phii (t, \bar{y})$ 
It is easy to see that on $\supp (\tilde{T}_{\bar y}$ the
map $\p_{\bar{y}}$ coincides with the map $\q$ of Lemma~\ref{l:AK->FF}. Therefore
the map $]a,b[\to F (t, \bar{y})$ is Lipschitz (up to a null-set).
Arguing in the same way
for each coordinate, we conclude that one can 
redefine $F$ on a set of measure zero in such a way that $F$ is Lipschitz: we will keep
the notation $F$ for such Lipschitz map.

Define next $N (x) = \sum_i \a{F_i (x)-x}$. We then see that, by construction,
$N$ satisfies Assumption \ref{i:tripla_malefica}(N). Fix next coordinates on $\cM$
(for instance using $\Phii$ as chart). By Proposition~\ref{p:repr_formula}
and the bounds
on $f$ and $\phii$, we deduce that
\[
\langle d \p , \vec{\bG}_f\rangle \geq c >0 \quad \text{and}\quad
\langle d \p , \vec{\bT}_F\rangle \geq c >0,
\]
for a suitable
geometric constant $c$ (where we use the notation $d\p = d\p^1 \wedge \ldots \wedge d\p^m$
and $\p^1, \ldots, \p^m$ are the components of $\p$ in the particular chart chosen on $\cM$).
Hence, if $\bT_F \neq \bG_f\res \p^{-1} (\cM)$, then necessarily
$\bT_F \res d\p \neq \bG_f \res d\p$, which is a contradiction to
$\langle T', \p, y\rangle = \langle T, \p, y\rangle$ for a.e. $y$ (cf.~\cite[(5.7) and
Theorem~5.6]{AK}).

\medskip

{\em Part II}. To prove \eqref{e:stime_cambio2}
consider first pairs of points $p, q\in \cM$ with the following property:
\begin{itemize}
\item[(AE)] let $\sigma = \Phii([\p_{\pi_0}(p),\p_{\pi_0}(q)])$,
$F|_{\sigma} = \sum \a{F_i}$ with each $F_i$ Lipschitz
(cf.~\cite[Proposition~1.2]{DS1}), and consider
the corresponding curves $\gamma_i=F_i(\sigma)$: then, for $\cH^1$-a.e.~$y\in \gamma_i$, 
$\vec\gamma_i (y)$ belongs to the tangent plane $T_y \bG_f$.
\end{itemize}
We claim that (AE) implies:
\begin{equation}\label{e:stima_intermedia}
|N(p) - N(q)| \leq C ( \|D^2 \phii\|_{C^0} \|N\|_{C^0} + \Lip (f) + \|D \phii\|_{C^0})\,
|\p_{\pi_0}(p) - \p_{\pi_0}(q)|\, .
\end{equation}
By standard measure theoretic arguments, (AE) holds for a set of pairs $(p,q)$ of full measure
in $\cM\times \cM$. With a simple density
argument we then conclude the validity of \eqref{e:stima_intermedia} for every pair $p,q$.
Denote by $d$ the geodesic distance on $\cM$. Since $|\p_{\pi_0}(p) - \p_{\pi_0}(q)| \leq d (p,q)$,
we then conclude the Lipschitz estimate \eqref{e:stime_cambio2}.

Let us turn to \eqref{e:stima_intermedia}. We parameterize $\sigma$ by arc-length
$s : [0, \ell]\to \sigma$ and for every $i$ define $n (t) := F_i (s(t))-s(t)$. 
Clearly, $n$ is Lipschitz and we claim that:
\begin{equation*}
|n' (t)| \leq C \big( \|D^2 \phii\|_{C^0} \|n\|_{C^0} + \Lip (f) + \|D \phii\|_{C^0}\big) \quad \mbox{for a.e.~$t$}.
\end{equation*}
Observe that $\frac{s' (t) + n'(t)}{|s'(t)+n'(t)|} = \vec\gamma_i (F_i(s(t)))$ which, for a.e.~$t$, belongs to $T_{F_i(s(t))}\gr(f)$.
The angle $\theta$ between $\vec\gamma_i (F_i(s(t))$ and the plane $\p^{-1} (s(t))$
can then be estimated by 
\begin{equation}\label{e:angle}
\left|\frac{\pi}{2} - \theta\right|\leq C \big(\Lip (f) + \|D\phii\|_{C^0}\big)\, .
\end{equation}
Let $\p^T$ and $\p^\perp$ be the projections to the tangent and normal planes to $\cM$ in $F_i(s(t))$.
Then, if $c_0$ is chosen small enough to have $|n'(t)| \leq 1$, we get
\begin{align}
|\p^\perp (n'(t))| &= |\p^\perp (n'(t)+s'(t))| = |n'(t)+s'(t)| |\p^\perp (\vec\gamma_i (F_i(s(t)))| \leq 2 \, 
|\cos \theta|\nonumber\\
&\stackrel{\mathclap{\eqref{e:angle}}}{\leq} C \big(\Lip (f) + \|D\phii\|_{C^0}\big)\, .\label{e:angolo}
\end{align}
In order to compute the tangential component,
%
let $\nu_1, \ldots, \nu_n$ be an orthonormal frame on the normal bundle. It can be chosen so that
$\|D \nu_j\|_{C^0}\leq C \|D^2\phii\|_{C^0}$ for every $j$ (see Lemma \ref{l:trivialize}).
From $n (t) := \sum_j \lambda_j (t) \nu_j (s(t))$, with $\lambda_j (t) := n (t) \cdot \nu_j (s(t))$ Lipschitz functions, we get
\[
\p^T (n'(t)) = \p^T \left( \sum \lambda_j' (t) \nu_j (s(t)) + \sum \lambda_j (t) \frac{d}{dt} \nu_j (s(t))\right)
= \sum \lambda_j (t) \p^T \left(\frac{d}{dt} \nu_j (s(t))\right)\, ,
\]
which implies
\begin{equation}\label{e:seconda_forma_3}
|\p^T (n'(t))| \leq \sum \|\lambda_j\|_{C^0} \|D\nu \|_{C^0}\leq C \|n\|_{C^0}\|D^2 \phii\|_{C^0}\, .
\end{equation}
Putting together \eqref{e:seconda_forma_3} and \eqref{e:angolo}, we get \eqref{e:stima_intermedia}. 
\end{proof}

\subsection{Validity of the geometric algorithm} Before completing the proof of Theorem \ref{t:cambio}
we show Lemma \ref{l:algoritmo_naturale}, which indeed will be used the derive the remaining estimates
in Theorem \ref{t:cambio}. 

\begin{proof}[Proof of Lemma~\ref{l:algoritmo_naturale}].
By the representation formula in Proposition~\ref{p:repr_formula},
since the support of the push-forward via a Lipschitz map is the image of the map and
we already proved $\bT_F = \bG_f \res \bU$, we then conclude that
$\im(F)=\gr(f)\cap \bU$ as {\em sets}.
Thus, to complete the proof of
Lemma \ref{l:algoritmo_naturale} we just have to show the rule for determining the multiplicity of
a point $q\in (p+\varkappa) \cap \gr (f)$ in $F(p)$.
This rule follows easily
from the area formula when $\Lip (f)$, $\Lip (N)$ and $\Lip (\phii)$ are smaller than a geometric constant,
since under such assumption the Taylor expansions for the mass given by Theorem \ref{t:taylor_area}
and Corollary \ref{c:taylor_area} imply the following facts:
\begin{itemize}
\item if $y$ has multiplicity $k$ in $f(x)$, then 
\begin{equation*}
k- \frac{1}{2} \leq \liminf_{r\downarrow 0} \frac{\|\bG_f\| (\bB_\rho ((x,y)))}{\omega_m \rho^m}
\leq \limsup_{r\downarrow 0} \frac{\|\bG_f\| (\bB_\rho ((x,y)))}{\omega_m \rho^m} \leq k + \frac{1}{2}\, ;
\end{equation*}
\item if $p$ has multiplicity $k$ in $F(x)$, then 
\begin{equation*}
k- \frac{1}{2} \leq \liminf_{r\downarrow 0} \frac{\|\bT_F\| (\bB_\rho (p))}{\omega_m \rho^m}
\leq \limsup_{r\downarrow 0} \frac{\|\bT_F\| (\bB_\rho (p))}{\omega_m \rho^m} \leq k + \frac{1}{2}\,.\qedhere
\end{equation*}
\end{itemize}
\end{proof}

We can now conclude the proof of Theorem~\ref{t:cambio}.

\begin{proof}[Proof of Theorem \ref{t:cambio}: Part III]
We first deal with \eqref{e:stime_cambio1} and \eqref{e:stima_media}.
Observe first that, thanks to Lemma~\ref{l:algoritmo_naturale},
the value of $N$ at the point $(p, \phii (p))$ does not change if we replace $\phii$
with its first order Taylor expansion.
Moreover, upon translation we can further assume $p=0$ and $\phii (0)=0$.
We moreover fix the notation $\pi := \{(x, D\phii (x) \cdot x):x\in\pi_0\} = T_0 \gr (\phii)$
and denote by $\varkappa$ the orthogonal complement of $\pi$. With a slight abuse of notation,
the same point $p\in \R^{m+n}$
is then represented by a pair $(x,y)\in \pi_0\times \pi_0^\perp$ and a pair $(x',y')\in \pi\times \varkappa$. 
Concerning \eqref{e:stime_cambio1}, since the role of the two systems can be reversed, it suffices to show 
only one inequality, namely
\begin{equation}\label{e:stima_in_0}
|f| (0) \leq 2\sqrt{Q} |N (0)|\, .
\end{equation}
Let $f(0) = \sum_i \a{P_i}$, $q_i := \p_\pi (P_i)$ and $N (q_i) = \sum_j \a{Q_{i,j}}$. 
There is then a $j(i)$ such that $(q_i, Q_{i,j(i)})\in \pi\times \varkappa$ 
is the same point as $(0, P_i)\in \pi_0\times \pi_0^\perp$.
Observing that $|q_i|\leq C \|D\phii\|_0 |P_i|$, we then get
\begin{align}
|P_i| &\leq |q_i| + |Q_{i,j(i)}| \leq |q_i| + |N (0)| + \cG (N (0), N(q_i)) \leq |N (0)| + (1+ \Lip (N)) |q_i|\nonumber\\
&\leq |N(0)| + C (1+ \Lip (N)) \|D\phii\|_0 |P_i|\, .
\end{align}
We use now \eqref{e:stime_cambio2} with $\phii$ linear: $\Lip (N)\leq C (\|D\phii\|_0 + \Lip (f)) \leq C c_0$. We thus conclude
\[
|P_i| \leq |N(0)| + C (1+c_0\,C) c_0 |P_i|\, .
\]
However, the constant $C$ in the last inequality is only geometric and does not depend on $c_0$. Thus, if $c_0$ is chosen sufficiently small, we conclude $|P_i|\leq 2 |N(0)|$. Summing upon $i$, we then reach $|f(0)|\leq 2Q^{\frac{1}{2}} |N(0)|$.

We now pass to \eqref{e:stima_media}, keeping the assumption
$f(0) = \sum_i \a{P_i}$ and writing $N (0) = F(0) = \sum_i \a{p_i}$.
Set $\p_{\pi_0} (p_i) = (x_i, 0) $ and $\p_{\pi_0^\perp} (p_i) = (0, y_i)$.
The angle $\theta$ between $p_i$ and $\p_{\pi_0^\perp} (p_i)$ 
is estimated by $C\,|D \phii (0)|$, because the $p_i$'s are elements of $\varkappa$. Thus,
\begin{equation}\label{e:spostamento}
|x_i|\leq |p_i \sin \theta| \leq C \,|D \phii (0)|\,|N(0)| = :\rho.
\end{equation}
Consider also that
$\p_{\pi_0^\perp}: \varkappa \to \pi_0^\perp$ is a linear invertible map and in fact we can assume that the operator
norm of its inverse, which we denote by $L$, is bounded by $2$. Thus $|\etaa \circ N (0)| \leq 2 |\sum_i y_i|$ and it suffices to estimate
\begin{equation}\label{e:verticale}
\left|\sum y_i\right| \leq \left|\sum P_i\right| + C \,\Lip (f) \rho\, .
\end{equation}
To this aim, we notice that, if we set $h = \Lip(f)\,\rho$, we can decompose $f(0)$ as
$f(0) = \sum_j \a{T_j}$ (where $T_j \in \I{Q_j}$ and $Q_1+\ldots + Q_J=Q$) so that
\begin{itemize}
\item[(i)] $d(T_j) \leq 4\,Q\,h$, where $d(S):=\max_{i,j}|s_i-s_j|$ is
the {\em diameter} of $S=\sum_i\a{s_i}$ -- cf.~\cite{DS1};
\item[(ii)] $ |z-w| > 4\,h$ for all $z \in T_j$ and $w\in T_i$ with $i\neq j$.
\end{itemize}
To prove this claim we order the $P_i$'s and partition them in
subcollections $T_1, \ldots, T_k$ with the following algorithm. $T_1$ contains $P_1$ and any other point
$P_\ell$ for which there exists a chain $P_{i(1)}, \ldots, P_{i(l)}\in \supp(T)$ of points
with $i(1) = 1$, $i(l) =\ell$ and $|P_{i(l)} - P_{i(l-1)}|\leq
4\,h$. Clearly $d(T_1) \leq 4\,Q\,h$ and if $\supp (T) = \supp (T_1)$ we are finished. Otherwise
we use the procedure above to define $T_2$ from $\supp (T)\setminus \supp (T_1)$, observing that
$|q-p|> 4\,h$ for any pair of elements $q\in \supp (T_1)$ and $p\in \supp (T)\setminus
\supp(T_1)$.

By the choice of the constants, it then follows that the function $f$
``separates'' into $J$ Lipschitz functions $f_j: B_\rho\to \I{Q_j}
(\R^{n})$ with $f (x) = \sum_{j=1}^J\a{f^j (x)}$ and $\Lip (f_j) \leq \Lip (f)$.
Consider the corresponding graphs $\gr (f^j)$. Observe that, by the geometric algorithm,
$N(0)$ contains points from each of these sets and moreover such points have, in $N(0)$,
the same multiplicity that they have in $f^j$. This means that the points $p_i$ such that
$N(0) = \sum_i \a{p_i}$ can actually be also grouped in $J$ families $\{p^j_1, \ldots , p^j_{Q_j}\}$
so that
$N(0) = \sum_{j=1}^J \sum_{l=1}^{Q_j}\a{p^j_l}$.

Note that, by the definition of the distance $\cG$, for each $p^j_l \in \supp(N(0))$ there exists
a point $P_{k(j,l)}\in \supp f^j(0)$ such that $|y^j_l -
P_{k(j,l)}|\leq \cG (f^j (\p_{\pi_0} (p^j_l)), f^j (0))\leq \Lip (f) |\p_{\pi_0} (p^j_l)| \leq h$. Thus
\begin{align*}
\left\vert\sum_i y_i \right\vert={}&\left\vert
\sum_{j=1}^J\sum_{l=1}^{Q_j} y^j_l\right\vert 
\leq 
\left\vert\sum_{j=1}^J\sum_{l=1}^{Q_j}P^j_l\right\vert 
+ \sum_{j=1}^J\sum_{l=1}^{Q_j}  |y^j_l - P^j_l |\\
\leq{}& \left\vert\sum_{i}P_i\right\vert 
+\sum_{j=1}^J\sum_{l=1}^{Q_j} \Big( |y^j_l - P_{k(j,l)} | +
|P_{k(j,l)} - P^j_l |\Big)
\leq \left\vert\sum_{i}P_i\right\vert +
C\,h.
\end{align*}

Finally, for what concerns \eqref{e:stima_cambio_10}, observe that, without loss of generality,
we can assume $q=0$ by simply shifting $\cM$ to $q+\cM$: Lemma \ref{l:algoritmo_naturale} implies
that the map $N'$ given by 
Theorem \ref{t:cambio} applied to $q+\cM$ satisfies $N' (\xi+q) = \sum_i \a{N_i (\xi) -q}$ and so
thus $\cG (N' (\xi+q), Q\a{0}) = \cG (N (\xi), \a{q})$ Assuming $q=0$
we have $\xi=(p,\etaa \circ f (p)) = (p, \phii (p))$ and thus the estimate matches the left hand side
of \eqref{e:stime_cambio1}.
\end{proof}

\appendix
\section{Trivializing normal bundles}

In this and the forthcoming papers the following procedure will be often used. Consider 
$\cM$, $\phii$ and $\Phii$ as in Assumption \ref{i:tripla_malefica}. We then construct a standard orthonormal
frame on the normal bundle of $\cM$ as follows: 
\begin{itemize}
\item[(Tr1)] we let $e_{m+1}, \ldots, e_{m+n}$ be the standard orthonormal base of $\{0\}\times \R^n$;
\item[(Tr2)] for any $p\in \cM$ we let $\varkappa_p$ be the orthogonal complement of $T_p \cM$ and
denote by $\p_{\varkappa_p}$ the orthogonal projection onto it;
\item[(Tr3)] for any $i\in \{1, \ldots, n\}$ and any $p\in \cM$ we generate
the frame $\nu_1 (p), \ldots, \nu_n (p)$ applying the Gram-Schmidt orthogonalization procedure
to $\p_{\varkappa_p} (e_{m+1}), \ldots, \p_{\varkappa_p} (e_{m+n})$.
\end{itemize}
We record then the following lemma.

\begin{lemma}[Trivialization of the normal bundle of $\cM$]\label{l:trivialize}
If $\|D\phii\|_{C^0}$ is smaller than a geometric constant, then $\nu_1, \ldots, \nu_n$ is an
orthonormal frame spanning $\varkappa_p$ at every $p\in \cM$. Consider $\nu_j$ as function of
$x\in \Omega$ using the inverse of $\Phii$ as chart.
For every $\alpha + k \geq 0$
there is a constant $C$ depending on $m,n,\alpha, k$ such that,
if $\|\phii\|_{C^{k+1, \alpha}} \leq 1$, then $\|D\nu_j\|_{C^{k, \alpha}} \leq C \|\phii\|_{C^{k+1, \alpha}}$.
\end{lemma}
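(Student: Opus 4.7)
The plan is to split the proof into two independent parts: an algebraic/geometric argument showing that the Gram--Schmidt output is a genuine orthonormal frame of $\varkappa_p$, and an analytic argument giving the Hölder bound via the chain rule.

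First I would record an explicit formula for the projector. If $p=\Phii(x)$ then $T_p\cM$ is the column span of the $(m+n)\times m$ matrix $A(x)$ whose $i$th column is $(e_i,\partial_i\phii(x))$, and the orthogonal projector onto $\varkappa_p$ is
\[
\p_{\varkappa_p} \;=\; \mathrm{Id}_{m+n} \;-\; A(x)\bigl(\mathrm{Id}_m + D\phii(x)^\top D\phii(x)\bigr)^{-1}A(x)^\top.
\]
This is well-defined and depends real-analytically on the entries of $D\phii(x)$ as long as $\mathrm{Id}_m + D\phii^\top D\phii$ is invertible, which holds once $\|D\phii\|_{C^0}$ is below a dimensional threshold. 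At $D\phii=0$ the formula reduces to the projection onto $\pi_0^\perp$, so $\p_{\varkappa_p}(e_{m+j}) = e_{m+j} + O(\|D\phii\|_{C^0})$ uniformly in $x$. The vectors $\{\p_{\varkappa_p}(e_{m+j})\}_{j=1}^n$ are therefore a small perturbation of the standard basis of $\pi_0^\perp$, hence linearly independent, and being $n$ elements of the $n$-dimensional space $\varkappa_p$ they form a basis of it. Gram--Schmidt then produces the claimed orthonormal frame, and under the smallness hypothesis every intermediate vector being normalized has length close to $1$, so no division-by-zero issue arises.

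For the regularity estimate I would package the output of Gram--Schmidt as $\nu_j(x) = G_j\bigl(D\phii(x)\bigr)$ for a real-analytic map $G_j$ defined on a neighborhood $U$ of $0$ in $\R^{n\times m}$ with $G_j(0) = e_{m+j}$. This is legitimate because the formula for $\p_{\varkappa_p}$ is real-analytic in $D\phii$ and the Gram--Schmidt operations consist of polynomial combinations and normalizations by quantities that are bounded uniformly below. Differentiating the composition and invoking Faà di Bruno gives an expression for $D\nu_j$ in terms of $DG_j$ evaluated at $D\phii(x)$ paired with derivatives of $\phii$; the standard Hölder composition estimate for a smooth map applied to a Hölder function then delivers the claimed bound on $\|D\nu_j\|_{C^{k,\alpha}}$ in terms of $\|\phii\|_{C^{k+1,\alpha}}$, using $\|\phii\|_{C^{k+1,\alpha}}\leq 1$ to absorb the lower-order factors.

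The main obstacle will be the bookkeeping in the composition estimate at arbitrary order $(k,\alpha)$; however, since $G_j$ and all its derivatives are uniformly bounded on the fixed neighborhood $U$ of the origin and $D\phii$ takes values in $U$ by the smallness assumption, this reduces to a routine application of the composition lemma and presents no essential difficulty.
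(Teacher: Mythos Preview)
The paper does not supply a proof of this lemma: it is stated in the appendix and left without argument, presumably because the authors regard it as routine. Your proposal is a correct and complete way to fill this gap, and the two-part structure (well-definedness of the frame, then H\"older estimates via composition) is exactly what is needed.

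One small inaccuracy: you say that $\mathrm{Id}_m + D\phii(x)^\top D\phii(x)$ is invertible ``once $\|D\phii\|_{C^0}$ is below a dimensional threshold''. In fact this matrix is always positive definite, hence invertible for every $D\phii$. The smallness hypothesis is needed not here but later, to guarantee that the projected vectors $\p_{\varkappa_p}(e_{m+j})$ are close to $e_{m+j}$ so that the Gram--Schmidt normalizations stay uniformly bounded away from zero (which in turn keeps $G_j$ real-analytic on a fixed neighborhood $U$ of the origin). Once you relocate the use of the smallness assumption to that step, the argument goes through as written.
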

\bibliographystyle{plain}
\bibliography{references-Alm}
\end{document}